\newtheorem{coro}{Corollary}
\newcommand{\EE}{\mathbb{E}}
\newcommand{\PP}{\mathbb{P}}
\newtheorem{remark}{Remark}%
\newtheorem{assumption}{Assumption}
\newtheorem{lemma}{Lemma}
\newtheorem{theorem}{Theorem}
\DeclareMathOperator*{\argmin}{arg\,min}
\newcommand{\blind}{0}
\begin{document}

\def\spacingset#1{\renewcommand{\baselinestretch}%
{#1}\small\normalsize} \spacingset{1}

%%%%%%%%%%%%%%%%%%%%%%%%%%%%%%%%%%%%%%%%%%%%%%%%%%%%%%%%%%%%%%%%%%%%%%%%%%%%%%

\if0\blind
{
  \title{\bf An Online Learning Approach to Dynamic Pricing and Capacity Sizing in Service Systems}
  \author{Xinyun Chen\\
  	School of Data Science, Chinese University of Hong Kong (Shenzhen),\\
    Yunan Liu\\
    Dept. of Industrial and Systems Engineering, North Carolina State University,\\
	Guiyu Hong\\
	School of Data Science, Chinese University of Hong Kong (Shenzhen),
}
  \maketitle
} \fi

\if1\blind
{
  \bigskip
  \bigskip
  \bigskip
  \begin{center}
    {\LARGE\bf Title}
\end{center}
  \medskip
} \fi

\bigskip
\begin{abstract}
We study a dynamic pricing and capacity sizing problem in a $GI/GI/1$ queue, where the service provider's objective is to obtain the optimal service fee $p$ and service capacity $\mu$ so as to maximize the cumulative expected profit (the service revenue minus the staffing cost and delay penalty). % over a given finite time horizon.
Due to the complex nature of the queueing dynamics, such a problem has no analytic solution so that previous research often resorts to heavy-traffic analysis where both the arrival rate and service rate are sent to infinity.
In this work we propose an \textit{online learning} framework designed for solving this problem which does not require the system's scale to increase. Our framework is dubbed \textit{Gradient-based Online Learning in Queue} (GOLiQ). 
GOLiQ organizes the time horizon into successive operational cycles and prescribes an efficient procedure to obtain improved pricing and staffing policies in each cycle using data collected in previous cycles.
Data here include the number of customer arrivals, waiting times, and the server's busy times.
The ingenuity of this approach lies in its online nature, which allows the service provider do better by interacting with the environment.
%Following the algorithm, the service provider needs to resolve the tension between understanding how customer delay responds to the service provider's decisions and
% maximizing earned revenues.
Effectiveness of GOLiQ is substantiated by (i) theoretical results
including the algorithm convergence and regret analysis (with a logarithmic regret bound), and (ii) engineering confirmation
via simulation experiments of a variety of representative $GI/GI/1$ queues.
\end{abstract}

\noindent%
{\it Keywords:}  online learning in queues; service systems; capacity planning; staffing; pricing in service systems
\vfill

\newpage
\spacingset{1.8} % DON'T change the spacing!
%%%%%%%%%%%%%%%%%%%%%%%%%%%%%%%%%%%%%%%%%%%%%%%%%%%%%%%%%%%%%%%%%%%%%%%%%%%%%%%%%%%%%%%%%%%%%%%%%%%%%%%%%\part{title}
\section{Introduction}

\subsection{Problem Statement and Methodology}
\subsubsection{Pricing and capacity sizing in queue}
We study a service queueing model where the service provider manages congestion and revenue by dynamically adjusting the price and service capacity. Specifically, we consider a $GI/GI/1$ queue, in which  the demand for service is $\lambda(p)$ per unit of time when each customer is charged by a service fee $p$; the cost for providing service capacity $\mu$ is $c(\mu)$; and a holding cost $h_0$ incurs per job per unit of time. By choosing the appropriate service fee $p$ and capacity $\mu$, the service provider aims to maximize the net profit, which is the service fee minus the staffing cost and penalty of congestion, i.e.,
\begin{equation}\label{eq: maximizing revenue}
	\max_{\mu,p}~\mathcal{P}(\mu,p)\equiv p\lambda(p)-c(\mu)-h_0\EE[Q_\infty(\mu,p)],
\end{equation}
where $Q_\infty(\mu,p)$ is the steady-state queue length under service rate $\mu$ and price $p$.

Problems in this framework have a long history, see for example \cite{Kumar2010}, \cite{LeeWard2014}, \cite{LeeWard2019}, \cite{Maglaras2003}, \cite{Nair2016}, \cite{Kim2018}  and the references therein. %Following \cite{LeeWard2014}. %, we consider a $GI/GI/1$ queue
Due to the complex nature of the queueing dynamics, exact analysis is challenging and often unavailable (computation of the optimal dynamic pricing and staffing rules is not straightforward even for the Markovian $M/M/1$ queue \citep{AS06}).
Therefore, researchers resort to heavy-traffic analysis to approximately obtain performance evaluation and optimization results. Commonly adopted heavy-traffic regimes require sending the arrival rate and service capacity (service rate or number of servers) to $\infty$. 
%The main advantage of the asymptotic analysis is its tractability, because convenient approximation formulas are often available in the heavy-traffic limit (one no longer needs to deal with the discrete nature of the queueing performance functions). 
Although heavy-traffic analysis provides satisfactory results for large-scale queueing systems, approximation formulas based on heavy-traffic limits often become inaccurate as the system scale decreases. 
%\textcolor{red}{Another potential restriction of solution techniques based on heavy-traffic theories is that they often have to assume the full (or partial) knowledge of the distributions of the service and interarrival times.} 

\subsubsection{An online learning method}
In this paper \textit{we propose an online learning framework designed for solving Problem} \eqref{eq: maximizing revenue}.
According to our online learning algorithm, the $GI/GI/1$ queue will be operated in successive cycles, where in each cycle the service provider's decisions on the service fee $p$ and service capacity $\mu$, {deemed the best by far}, are obtained using the system's data collected in previous operational cycles. Data hereby include (i) the number of customers who join for service, (ii) customers' waiting times, and (iii) the server's busy time, all of which are easy to collect. Newly generated data, which represent the response from the (random and complex) environment to the present operational decisions,  will be used to obtain improved pricing and staffing policies in the next cycle.
In this way the service provider can dynamically interact with the environment so that the operational decisions can evolve and eventually approach the optimal solution. %s to Problem \eqref{eq: maximizing revenue}. % (we may say that the service provider will eventually ``learn" the true optimizer of \eqref{eq: maximizing revenue} through a trial-and-error process).
{At the beginning of each} cycle $k$, the service provider's decisions $(p_k,\mu_k)$ will be computed and enforced throughout the cycle. %be improved as his ``learning'' progresses.
At the heart of our procedure for computing $(p_k,\mu_k)$ is to obtain a sufficiently accurate estimator $H_{{k-1}}$ for the gradient of the objective function of \eqref{eq: maximizing revenue}, using past experience. Specifically, our online algorithm will update $(p_k, \mu_k)$ according to
\begin{align*}
	(\mu_{k},p_{k})\leftarrow (\mu_{k-1},p_{k-1})+\eta_{k-1} H_{k-1},
\end{align*}
where $\eta_k$ is the updating step size for cycle $k$. We call this algorithm \textit{Gradient-based Online Learning in Queue} (GOLiQ).

Besides showing that, under our online learning scheme, the decisions in cycle $k$, $(\mu_k, p_k)$ will converge to the optimal solutions $(\mu^*, p^*)$ as $k$ increases, we quantify the effectiveness of GOLiQ by computing the \textit{regret} - the cumulative loss of profit due to the suboptimality of $(\mu_k, p_k)$, namely, the maximum profit under the (unknown) optimal strategy minus the expected profit earned under the online algorithm over time. %, i.e., $R(t)=\mathcal{P}(\mu^*,p^*)t-\EE[\mathcal{P}_t]$.
When GOLiQ's hyperparameters are chosen optimally, we show that our regret bound is logarithmic so that the service provider, with any initial pricing and staffing policy $(\mu_0, p_0)$, will quickly learn the optimal solutions without losing much profit in the learning process.

%We show that, under mild assumptions, our online algorithm is optimal in the sense that, with a careful choice of the algorithm parameters (say, $T_k$ and $\eta_k$), the regret $R(t)$ achieves the theoretical lower bound asymptotically .

%Broadly speaking, our online learning method falls into the class of  stochastic gradient descent (SGD) methods.
%In details, in iteration $k$, the agent runs the service system under the current parameter $(\mu_k,p_k)$  for a certain period of time $T_k$ (we call this period cycle $k$). At the end of cycle $k$, the agent collects the data, say waiting times, from customers served in cycle $k$ to obtain an estimate of the gradient $H_k$ and then, update $(\mu_{k+1},p_{k+1})\leftarrow (\mu_k,p_k)+\eta_kH_k$ with $\eta_k$ being the iteration step.
%As the agent is optimizing the system on the fly, we shall use regret,

%Since our target is to optimize the steady-state performance of the queueing system, we will send $T_k\to \infty$ to mitigate the transient bias in the observed data.

\subsection{Advantages, Challenges and Contributions}

%\subsubsection{Advantages and Challenges}
In what follows, we first discusses the general advantages of the online learning approach by contrasting with heavy-traffic methods; we next explain the key challenges we face in the development of online learning algorithms for queueing systems.

\subsubsection{Online learning vs. heavy-traffic method.}
First, heavy-traffic solutions are derived from approximating models which arise as the system scale approaches infinity, so the fidelity of the solutions is sensitive to  the system scale. Unlike heavy-traffic methods, online learning approaches do not require any asymptotic scaling, so they can treat service systems at any scale (small or large). 
Second, heavy-traffic approaches usually require the knowledge of certain distributional information apriori (e.g., moments and distribution functions of service times), which serve as critical input parameters for the heavy-traffic models.
On the other hand, online learning methods require information of this kind to a lesser extent. Although certain distribution information can help fine-tune parameters of online algorithms, it is  {less crucial} to  {algorithm design and implementation}. So in this sense, the dependence on the distributional information is weaker than that of heavy-traffic analysis.
%it does not play a role as critical as in heavy-traffic methods.
{Last, online learning is advantageous when the underlining problem focuses on performance optimization in the long run. Heavy-traffic analysis gives approximate solutions that are static, and in a longer time frame, the performance discrepancy (relative to the true optimal reward) should grow linearly as time increases. But online learning is a dynamic evolution, and its data-driven nature enables it to constantly produce improved solutions which will eventually reach optimality.
	%\blue{Last, online learning is a data-driven method and can constantly produces improved solutions as new data are generated from the system experience. In contrast, heavy-traffic analysis given approximate solutions that are static. As a consequence, in long term, the heavy-traffic solution will cumulate linearly growing errors while online learning will eventually reach optimality. \footnote{The notions of asymptotic optimality are distinct in the two methods. As an approximate method, the heavy-traffic result is asymptotically optimal in the sense that as the system size grows large, its solution will be close to the true optimal solution. On the other hand, the solution of an online learning method will converge to the true optimal solution as the server’s experience accumulates (by serving more and more customers).} }
	In addition,  heavy-traffic solutions require the establishment of  heavy-traffic limit theorems and careful analysis of the dynamics of the limit processes (e.g., fluid and diffusion). Both steps can be quite sophisticated in general. 
	%It is good at ``learning from scratch". %online nature is especially beneficial when there exists insufficient data 
	See Remarks \ref{rem:phi} and \ref{rem:info} for more detailed discussions; also see Section \ref{sec:AsymOpt} for numerical evidence.}

%and {(ii) it does not require the accurate information of the service and interarrival times beyond their means, hence this is a more robust method.}  {(Although some prior knowledge on the model properties (e.g., Assumptions \ref{assmpt: uniform}--\ref{assmpt: convexity}) are still required, they can often be relaxed without sacrificing the efficiency of our method (see Section \ref{sec: num} and the e-companion).} 

%The notions of optimality are different in the two methods.

\subsubsection{Challenges of online learning in queueing systems.} 
%\blue{Although design and implementation of online learning algorithm is less involved, rigorous analysis on the algorithm performance is still challenging. } 
{Online learning in queues is by no means an easy extension of online learning in other domains; its theoretical development has to account for the unique features in queueing systems.}
A crucial step is to develop effective ways to control the nonstationary error that arises at the beginning of every cycle due to the policy update. Towards this, we develop a new regret analysis framework for the transient queueing performance that not only helps establish desired regret bounds for the specific online $GI/GI/1$ algorithm, but may also be used to develop online learning method for other queueing models (see Section \ref{sec: framework}).
%. Specifically, because new pricing and staffing policies are implemented at the beginning of every cycle, this yields transient queueing performance, for which we will have to develop effective ways to characterize and control the transient dynamics at the beginning of every cycles due to the policy changes
%In particular, our objective function involves the stationary expectation of the queueing process, while in the data, we can only observe the transient samples of a non-stationary queue. Although it is natural to approximate the stationary expectation with long-run average of the transient samples, finite-time analysis of such approximation error and its impact on the algorithm performance is far from trivial, and we will deal with this challenge in Section \ref{subsec: transient analysis}.
Another challenge we have to address here is to devise a convenient gradient estimator for the online learning algorithm (essentially, an estimator for the gradient of $\EE[Q_\infty(\mu,p)]$). The estimator should have a negligible bias to warrant a quick convergence of the algorithm, and at the same time, its computation (using previous data) should be sufficiently straightforward to ensure the ease of implementation
(The detailed gradient estimator of GOLiQ for the $GI/GI/1$ system is given in Section \ref{sec: direct}).

\subsubsection{{Main Contributions}}\label{subsec: contribution}
We summarize our contributions below.
\begin{itemize}
	\item
	To the best of our knowledge, the present work is the first to develop an online learning framework for joint pricing and staffing in a queueing system {with logarithmic regret bound in the total number of customers served (Theorem \ref{thm: regret direct})}. Due to the complex nature of queueing systems,
	%such as $GI/GI/1$ which is one of the most fundamental queueing models,
	previous research often resorts to asymptotic heavy-traffic analysis to approximately solve for desired operational decisions. The ingenuity  of our online learning method lies in the ability to obtain the optimal solutions without needing the system scale (e.g., arrival rate and service rate) to grow large. The other appeal of our method is its robustness, especially in its weaker dependence on the distributions of service and arrival times. 
	%Our algorithm achieves a minimum regret bound $O(\log(N))$ ($N$ is the total number of customers served $N$). When the demand curve is unknown, which is the more practical case, we develop a second algorithm with a regret bound $O(\sqrt{N})$ which meets the theoretic lower bound we proved.
	
	\item	
	A critical step in the regret analysis is the treatment of the transient system dynamics, because when improved operational decisions are obtained and implemented at the beginning of a new period, the queueing performance will shift away from previously established steady-state level.
	Towards this, we develop a new way to treat and bound the transient queueing performance in the regret analysis of our online learning algorithm {(Theorem \ref{thm: non-stationary error}). Bounding the transient error also guarantees convergence of the SGD iteration (Theorem \ref{thm: broadiezeevi}). } Comparing to previous literature (e.g., the regret bound is $O(T^{2/3})$ in \cite{Huh2009}), our analysis of the regret due to nonstationarity gives a much tighter logarithmic bound.
	%Besides, we use a different framework based on \cite{BoradieZeevi2011} to obtain the convergence rate of the control parameters.
	In addition, the regret analysis in the present paper may be extended to other queueing systems which share similar properties to $GI/GI/1$.
	
	\item	
	Supplementing the theoretical results of our regret bound, we evaluate the practical effectiveness of our method by conducting comprehensive numerical experiments. Our simulations draw the following two main conclusions. First, our method is robust in several dimensions: (i) GOLiQ exhibits convincing performance for $GI/GI/1$ queues having representative arrival and service distributions; (ii) GOLiQ remains effective even when certain theoretical assumptions are relaxed. Furthermore, in order to clearly highlight the advantages of our online learning approach relative to the previous results of heavy-traffic limits, we provide a careful performance comparison of these two methods. We show that GOLiQ is more effective in any one of the following three cases: the system scale is not too large, {staffing cost is high}, or service times are more variable.
\end{itemize}

\subsection{{Organization of the paper}}
In Section \ref{sec: lit}, we review the related literature.
In Section \ref{sec: Model}, we introduce the model assumptions and provide an outline of our online learning algorithm.
In Section \ref{sec: framework}, we conduct the regret analysis for GOLiQ by separately treating
the \textit{regret of nonstationarity} - the part of regret arising from the transient system dynamics,
and the \textit{regret of suboptimality} - the part originating from the errors  due to suboptimal  pricing and staffing decisions.
In Section \ref{sec: direct}, we give the detailed description of GOLiQ and establish a logarithmic regret bound by appropriately selecting our algorithm parameters.
In Section \ref{sec: num} we conduct numerical experiments to confirm the effectiveness and  {robustness} of GOLiQ.
We conclude in Section \ref{sec: conclusions}.
In the e-companion, we give all technical proofs and provide additional numerical examples.

\section{Related Literature}\label{sec: lit}
The present paper is related to the following three streams of literature.
\paragraph{Pricing and capacity sizing in queues.}
There is a rich literature on pricing and capacity sizing in service systems under different settings. \cite{Maglaras2003}  studies pricing
and capacity sizing problem in a processor sharing queue motivated by internet applications; \cite{Kumar2010} considers a single-server system with nonlinear delay costs; \cite{Nair2016} studies $M/M/1$ and $M/M/k$ systems with network effect among customers; \cite{Kim2018} considers a dynamic pricing problem in a single-server system. The specific problem \eqref{eq: maximizing revenue} we consider here is most closely related to \cite{LeeWard2014}, i.e., joint pricing and capacity sizing for the $GI/GI/1$ queue. Later, the authors extend their results to the $GI/GI/1+G$ model with customer abandonment in \cite{LeeWard2019}.
%Following the conventional queueing models, most of papers on pricing and capacity sizing in queues assume perfect knowledge on model dynamics for given control parameters, for example, the demand curve $\lambda(p)$ and service cost $c(\mu)$ are assumed known explicitly.
As there is usually no closed-form solution for the optimal strategy or equilibrium, asymptotic analysis is adopted under large-market assumptions. In detail, their analysis is rooted in a deterministic static planning problem which requires both the service capacity and the demand rate  to scale to infinity.
%, and then, \textcolor{red}{some limiting property, for instance, the ratio between endogenous demand and capacity, is studied under the scaling, in order to obtain economical and managerial insights.}
Most of the papers conclude that heavy-traffic regime is economically optimal. (There are some exceptions where heavy-traffic regime is not optimal, for example, \cite{Kumar2010} shows that agent is forced to decrease its utilization if the delay cost is concave.)  Our algorithm is motivated by the pricing and capacity sizing problem for service systems, however, as explained previously, our methodology is very different from the asymptotic analysis used in these papers.

\paragraph{Reinforcement learning for queueing systems.} Our paper is also related to a small but growing literature on reinforcement learning (RL) for queueing systems. \cite{Dai2021} studies an actor-critic algorithm for queueing networks. \cite{Xie2019} and \cite{Xie2020} develop RL techniques to treat the unboundedness of the state space of queueing systems. 
\cite{Shi2020} studies a price-based revenue management problem in an $M/M/c$ queue with a discrete price space; their methodology draws from the multi-armed bandit framework (with each price treated as an ``arm"). \cite{Krishnasamy2021learning} develops  bandit methods for scheduling problem in a multi-server queue with unknown service rates.
%For their problem, they propose two multi-armed bandit based learning algorithms adapted for their situation with theoretical guarantee. Both of their paper and our paper need to consider the transient behavior of queues. However, due to different model formulation, the methodology and technical results are completely different. For example, their decision space is discrete while ours is continuous. Thus, it is hard for us to compare their method with ours.
%Although the model and policy studied in our paper is simpler than those in a general RL problem.
{Our work draws distinction from the above-mentioned literature in two dimensions. First, we are the first to develop an online learning method for  joint pricing and capacity sizing in queue. In addition, our method applies to settings of continuous decision variables.} Comparing to the more general RL literature, our algorithm design and regret analysis take advantage of the specific queueing system structure so as to establish tight regret bounds and more accurate control of the convergence rate. In some sense, the algorithm developed in the present paper may be viewed as a version of the \textit{policy gradient} method, a special class of RL methods \citep{SB18}, see Remark \ref{rmk: SGD} for detailed discussions. 
% efficiency analysis and optimization of RL algorithms, such as estimating error bound for the policy gradient estimator and convergence rate of policy parameters. In particular,  our approach illustrates how queueing structure can be used in such analysis, which we hope could shed some lights on the future development of RL algorithms for queueing systems.

\paragraph{Stochastic gradient decent algorithms.} In general, our algorithm falls into the broad class of \textit{stochastic gradient descent} (SGD) methods. There are some early papers on SGD algorithms for steady-state performance of queues (see \cite{Fu1990}, \cite{Chong1993}, \cite{LEcuyer94b}, \cite{LEcuyer1994} and the references therein). In particular, these papers have established convergence results of SGD algorithms for capacity sizing problems with a variety of gradient estimating designs. In this paper, we consider a more general setting in which the price is also optimized jointly with the service capacity. Besides, in order to establish theoretical bounds for the regret, we conduct a careful analysis on the convergence rate of the algorithm and provide an explicit guidance for the optimal choice of algorithm parameters, which is not discussed in this early literature.
Our algorithm design and analysis are also related to the online learning methods in recent inventory management literature \citep{Burnetas2000,Huh2009,Huh2013,Zhang2019,Yuan2021marrying}. Among these papers, our work is perhaps most closely related to \cite{Huh2009} where the authors develop an SGD based learning method for an inventory model with a bounded replenishment lead time. 
%To control the bias and variance of the gradient estimator, they also characterize the system's nonstationary performance. 
Still, due to the unique natures of queueing models, we develop a new regret analysis framework as we shall explain with details in Section \ref{subsec: contribution}.

%%%%%%%%%%%%%%%%%%%%%%%%%%%%%%%%%%%%%%%%%%%%%%%%%%%%%%%
%%%%%%%%%%%%%%%%%%%%%%%%%%%%%%%%%%%%%%%%%%%%%%%%%%%%%%%
\section{Problem Setting and Algorithm Outline}\label{sec: Model}
In Section \ref{subsec: assumptions} we describe the queueing model and technical assumptions.
In Section \ref{subsec: algorithm outline}, we  provide a general outline of GOLiQ.
Finally, in Section \ref{subsec: queueing dynamics} we conduct preliminary analysis of the queueing performance under GOLiQ.

\subsection{Model and Assumptions}\label{subsec: assumptions}
We study a $GI/GI/1$ queueing system having customer arrivals according to a renewal process with generally distributed interarrival times (the first $GI$), \textit{independent and identically distributed} (i.i.d.) service times following a general distribution (the second $GI$), and a single server that provides service under the first-in-first-out (FIFO) discipline.
Each customer upon joining the queue is charged by the service provider a fee $p>0$. The demand arrival rate (per time unit) depends on the service fee $p$ and is denoted as $\lambda(p)$. To maintain a service rate $\mu$, the service provider continuously incurs a staffing cost at a rate $c(\mu)$ per time unit.

For $\mu\in[\underline{\mu},\bar{\mu}]$ and $p\in[\underline{p}, \bar{p}]$, the service provider's goal is to determine the optimal service fee $p^*$ and service capacity $\mu^*$ with the objective of  maximizing the steady-state expected profit \eqref{eq: maximizing revenue},
%$$\max_{p} \mathcal{P}(\mu, p) = p\lambda(p) - h_0E[Q_{\infty}(\mu,p)]-c(\mu),$$
or equivalently minimizing the objective function $f(\mu,p)$ as follows
%$$\max_{p,\mu} \mathcal{P}(p,\mu) = h_0E[Q_{\infty}(p,\mu)] + c\mu - p\lambda(p). $$
\begin{align}\label{eqObjmin}
	\min_{(\mu,p)\in\mathcal{B}} f(\mu, p) \equiv h_0 \EE[Q_{\infty}(\mu,p)] + c(\mu)- p\lambda(p),\qquad \mathcal{B}\equiv [\underline{\mu},\bar{\mu}]\times[\underline{p}, \bar{p}].
\end{align}

We shall impose the following assumptions on the above service system throughout the paper.
\begin{assumption}\label{assmpt: uniform}\textbf{$($Demand rate, staffing cost, and uniform stability$)$}
	\begin{enumerate}
		\item[$(a)$] The arrival rate $\lambda(p)$ is continuously differentiable and non-increasing in $p$.
		\item[$(b)$] The staffing cost $c(\mu)$ is continuously differentiable and non-decreasing in $\mu$.
		\item[$(c)$] The lower bounds $\underline{p}$ and $\underline{\mu}$ satisfy that  $\lambda(\underline{p})<\underline{\mu}$ so that the system is uniformly stable for all feasible choices of the pair $(\mu,p)$.
	\end{enumerate}
\end{assumption}

Part $(c)$ of Assumption \ref{assmpt: uniform} %is reasonable if the agent has certain prior knowledge on the customer behaviors and can maintain the system to be stable throughout the optimization procedure. This  assumption
is commonly used in the literature of SGD methods for queueing models to ensure that the steady-state mean waiting time $\EE[W_\infty(\mu,p)]$ is differentiable with respect to model parameters (see \cite{Chong1993}, \cite{Fu1990}, \cite{LEcuyer94b}, \cite{LEcuyer1994}, also see Theorem 3.2 of \cite{Glasserman_1992}). %As $\EE[Q_\infty(\mu,p)]=\lambda(p)\EE[W_\infty(\mu,p)]$, Assumption \ref{assmpt: uniform} ensures that $f(\mu,p)$ is differentiable.
%Besides, as will become clear later, this assumption facilitates the convergence analysis of the SGD algorithms.
In the our numerical experiments (see Section \ref{sec:UnifStab}), we show that our online algorithm remains effective when this assumption is relaxed.
%Part $(c)$ of Assumption \ref{assmpt: uniform}
%is commonly used in the literature of SGD methods in queueing models \cite{Chong1993}, \cite{Fu1990}, \cite{LEcuyer94b}, \cite{LEcuyer1994}. As will become clear later, this assumption facilitates the convergence analysis of the SGD algorithms. In the our numerical experiments, we show that our online algorithm remains effective when this assumption is relaxed. %without this assumption but will leave the theoretical analysis to future

%\textcolor{red}{NEED REPHRASE:In this study, we emphasize that our algorithm depends on the arrival process and service times only through their first moment characteristics $\lambda(p)$ and $\mu$, that is, \textit{we do not require the detailed knowledge of their full distributions.} }% of the interarrival times and service times.
We do not require full knowledge of service and inter-arrival time distributions. But in order to develop explicit bounds for the part of the regret due to the nonstationarity of the queueing processes, we require both distributions to be light-tailed. Specifically, since the actual service and interarrival times  are subject to our pricing and staffing decisions, we model the interarrival and service times by two scaled random sequences $\{U_n/\lambda(p)\}$ and $\{V_n/\mu\}$, where $U_1,U_2,\ldots$ and $V_1,V_2,\ldots$ are two independent i.i.d. sequences of random variables having unit means, i.e., $\EE[U_n]=\EE[V_n]=1$. We make the following assumptions on $U_n$ and $V_n$.
\begin{assumption}\label{assmpt: light tail} \textbf{$($Light-tailed service and interarrival times$)$} \\
	%There exists a constant $\eta>0$ such that the moment-generating functions
	%	\begin{align*}
		%		\EE[\exp(\eta V_n)] <\infty \quad \text{and}\quad \EE[\exp(\eta U_n)]<\infty.
		%	\end{align*}
	There exists a sufficiently small constant $\eta>0$ such that the moment-generating functions
	$$\EE[\exp(\eta V_n)]<\infty\quad \text{and}\quad \EE[\exp(\eta U_n)]<\infty. $$ %$V_n$ and $U_n$ have finite moment generating function in a neighbourhood of the origin, and \blue{there exists $\bar{\eta}>0$ such that 
		%\begin{equation}\label{eq: eta}
		%  \EE[\exp\left(\bar{\eta}(V_n/\underline{\mu}-U_n/\lambda(\underline{p}))\right)]=1.
		%\end{equation}
		%Let $\eta$ be a positive number $<\bar{\eta}$.} 
	In addition, there exist constants $0<\theta <\eta/2\bar{\mu}$, $0<a<(\underline{\mu}-\lambda(\underline{p}))/(\underline{\mu}+\lambda(\underline{p}))$ and $\gamma >0$ such that
	\begin{equation}\label{eq: gamma0}
		\phi_U(-\theta) < -(1-a)\theta -\gamma\quad \text{and}\quad \phi_V(\theta) < (1+a)\theta -\gamma,
	\end{equation}
	where $\phi_V(\theta)\equiv \log\EE[\exp(\theta V_n)]$ and $\phi_U(\theta)\equiv \log\EE[\exp(\theta U_n)]$ are the cummulant generating functions of $U$ and $V$.
\end{assumption}
%\blue{The constant $\eta$ will be used to bound moment generating functions of the waiting time and queue length in Lemma \ref{lmm: uniform bound}. The existence of $\bar{\eta}$ can actually be relaxed. The intuition is that otherwise we can find another queueing model dominating the current model for which \eqref{eq: eta} has a positive root (see Remark on p.3222 of \cite{BlanchetChen_2015}). We keep the assumption for simplicity of notation and analysis.}
Note that $\phi'_U(0)=\phi'_V(0)=1$ as $\EE[U]=\EE[V]=1$. 
Suppose $\phi_U$ and $\phi_V$ are smooth around 0, then we have $\phi_U(-\theta) = -\theta +o(\theta)$ and $\phi_V(\theta) = \theta +o(\theta)$ by Taylor's expansion. This implies that, for any $a>0$, we can make $\theta$ small enough, such that $\phi_U(-\theta)<-(1-a)\theta$ and $\phi_V(\theta)<(1+a)\theta$. To obtain the bound in \eqref{eq: gamma0}, we can simply take 
$\gamma={\frac{1}{2}}\min(-(1-a)\theta-\phi_U(-\theta), (1+a)\theta - \phi_V(\theta))>0$.
Hence, a sufficient condition that warrants \eqref{eq: gamma0} is to require that $\phi_U$ and $\phi_V$ be smooth around 0, which is true for many distributions of $U$ and $V$ considered in common queueing models. Assumption \ref{assmpt: light tail} will be used in our proofs to build an explicit bound for the regret of nonstationarity.
%\begin{remark}
%Assumption \ref{assmpt: light tail} will be used to build explicit bound of finite-time transient bias of the queueing processes. This assumptions are satisfied by a large class of GI/GI/1 queues.
%\end{remark}
%\begin{assumption}\label{assmpt: lambda} \textbf{(Demand Curve)}
%	The demand $\lambda(p)\in C^2[\underline{p},\bar{p}]$ is decreasing and convex in $p$.
%\end{assumption}

Finally, in order to warrant the convergence of our online learning algorithm, we require a convex structure for the problem in \eqref{eqObjmin}, which is common in the SGD literature; see \cite{BoradieZeevi2011}, \cite{KushnerYin2003} and the references therein.

Let $x^*\equiv (\mu^*,p^*)$ and $x\equiv(\mu, p)$. Let $\nabla f(x)$ denote the gradient of a function $f(x)$ and $\|\cdot\|$ denote the Euclidean norm.
\begin{assumption}\label{assmpt: convexity}\textbf{$($Convexity and smoothness$)$}\\
	There exist finite positive constants $K_0\leq 1$ and $K_1>K_0$ such that for all $x\in\mathcal{B}{,}$
	\begin{enumerate}
		\item[$(a)$] $(x-x^*)^T\nabla f(x)\geq K_0\|x-x^*\|^2$;
		\item[$(b)$] $\|\nabla f(x)\|\leq K_1\|x-x^*\|${.}
		%\item[$(c)$] $|\frac{\partial^3}{\partial p^3}f(x)|,|\frac{\partial^3}{\partial \mu^3}f(x)|\leq K_1$.
		%\item $f(x)$ has bounded third derivatives, i.e.  $|\frac{\partial^3}{\partial \mu^3}f(\mu,p)|, |\frac{\partial^3}{\partial p^3}f(\mu,p)|\leq K_1$.
	\end{enumerate}
	
	%$f(\mu,p)$ is convex and Lipschitz smooth in second order, i.e., for any $p_1,p_2\in [\underline{p},\bar{p}]$,
	%	$$\|Hf(\mu_1, p_1)-Hf(\mu_2, p_2)\|_{\text{F}}\leq L(|p_1-p_2|+|\mu_1-\mu_2|).$$
	%	Here $Hf$ is the Hessian matrix of function $f$ and $\|\cdot\|_F$ is the Frobenius norm.
	%	\textcolor{red}{$$|f''(p_1)-f''(p_2)|\leq L |p_1-p_2|.$$}
\end{assumption}
\begin{remark}%\textbf{(Breakdown of Assumption \ref{assmpt: convexity})}
	%The convexity of $f(x)$ is directly related to the convexity of the steady-state mean queue length in $p$ and $\mu$, see Section \ref{subsec: convexity exampls}.
	
		Our simulation experiments  show that our algorithm works effectively for some representative $GI/GI/1$ queues with conditions in Assumption \ref{assmpt: convexity} relaxed; see Section \ref{sec: num} and Section \ref{sec: AddNum} in the Supplement Material.
		In addition, we later provide some sufficient conditions for Assumption \ref{assmpt: convexity} in the special case of $M/GI/1$ queues in Section \ref{appx: demand function}.  
	% even though verification of Assumption \ref{assmpt: convexity} is not straightforward.
\end{remark}

\subsection{Outline of GOLiQ}\label{subsec: algorithm outline}
In general, an SGD algorithm for a minimization problem $\min_x f(x)$ over a compact set $\mathcal{B}$ relies on updating the decision variable via the recursion
\begin{align*}
	x_{k+1} = \Pi_{\mathcal{B}}(x_k -\eta_k H_k ),\qquad k\geq 1.
\end{align*}
where $\eta_k$ is the step size, $H_k$ is a random estimator for $\nabla f(x_k)$, $x_k$ is the decision variable by step $k$, and the projection operator $\Pi_{\mathcal{B}}$ restricts the updated decision in $\mathcal{B}$. For problem \eqref{eqObjmin}, we let $x_k\equiv (\mu_k,p_k)$ represent the service capacity and price at step $k$,  We define
\begin{equation}\label{eq: BV}
	B_k\equiv\EE[\|\EE[H_k-\nabla f(x_k) | \mathcal{F}_k]\|^2]^{1/2}\quad \text{and}\quad \mathcal{V}_k \equiv \EE[\|H_k\|^2],
\end{equation}
where $\mathcal{F}_k$ is the $\sigma$-algebra including all events in the first $k-1$ iterations. Intuitively, $B_k$ measures the bias of the gradient estimator $H_k$ and $\mathcal{V}_k$ measures its variability. As we shall see later, $B_k$ and $\mathcal{V}_k$ play important roles in  designing the algorithm and establishing desired regret bounds.

The standard SGD algorithm iterates in discrete step $k$. In our setting, however, the queueing system and objective function $f(\mu,p)$ are defined in continuous time (in particular, $Q_\infty(\mu,p)$ is the steady-state queue length observed in continuous time). To facilitate the regret analysis, we first transform the objective function into an expression of customer waiting times that are observed in discrete time.
By Little's law, we can rewrite the objective function $f(\mu,p)$ as, for all $(\mu,p)\in\mathcal{B}$,
\begin{align}\label{eq:obj}
	f(\mu,p) = h_0\lambda(p)\left(\EE[W_\infty(\mu,p)]+ \frac{1}{\mu}\right)+c(\mu)-p\lambda(p),%\equiv \EE[g(W_{\infty}(\mu,p),\mu, p)],
\end{align}
where $W_\infty(\mu,p)$ is the steady-state waiting time under $(\mu,p)$. %, and $\mathcal{B} =[\underline{\mu},\bar{\mu}]\times [\underline{p},\bar{p}]$ is a bounded and convex set.
In each cycle $k$, our algorithm adopts the average of $D_k$ observed customer waiting times to estimate $\EE[W_\infty(\mu,p)]$, where $D_k$ denotes the number of customers that enter service in cycle $k$ (we refer to $D_k$ as the cycle length or sample size of cycle $k$).
But any finite $D_k$ will introduce a bias to our gradient estimate $H_k$.
To mitigate the bias due to the transient performance of the queueing process, we shall let the cycle length $D_k$ be increasing in $k$ (in this way
the transient bias will vanish eventually). We give the outline of the algorithm below.%We call the trajectory of the queueing system during iteration $k$ the $k$-th \textbf{cycle}. The outline of the algorithm is as follows: \\

\textbf{Outline of GOLiQ:}
\begin{enumerate}%[start = 0]
	\item[0.] Input: $\{D_k\}$ and $\{\eta_k\}$ for $k=1,2,..,L$, initial policy $x_1=(\mu_1, p_1)$. \medskip
	
	For $k = 1, 2,...,L$,
	\item[1.] In the $k^{\rm th}$ cycle, operate the $GI/GI/1$ queue under policy $x_k=(\mu_k,p_k)$ until $D_k$ customers enter service.
	
	\item[2.] Collect and use the data (e.g., customer delays) to build an estimator $H_k$ for $\nabla f(\mu_k,p_k)$.
	
	\item[3.] Update $x_{k+1} = \Pi_\mathcal{B}(x_k-\eta_k H_k)$.
\end{enumerate}
\vspace{1ex}
\begin{remark}[Exploration vs. exploitation]\label{rmk: SGD}
	The online nature of this algorithm makes it possible to obtain improved decisions by learning from past experience, which is in the spirit of  the essential ideas of \textit{reinforcement learning} where an agent (hereby the service provider) aims to tradeoff between \textit{exploration} (Step 1) and \textit{exploitation} (Steps 2 and 3). Effectiveness of the algorithms lies in properly choosing the algorithm parameters and devising an efficient gradient estimator $H_k$. {For example, if $D_k$ is too small, we are unable to generate sufficient data (we do not have much to \textit{exploit} in order for devising a better policy); if $D_k$ is too large, we incur a higher profit loss due to suboptimality of the policy in use (we do not \textit{explore} enough for seeking potentially better policies).}
	In particular, GOLiQ may be viewed as a special case of the \textit{policy gradient} (PG) algorithm 
	%- a subclass of RL methods which aims to fit the RL policy by convenient parametric models 
	(the general idea of PG is to estimate the policy parameters using the gradient of the value function learned via continuous interaction with the system, see for example \cite{SB18}). To put this into perspective, the policy in the present paper is specified by a pair of parameters $(\mu,p)$, and in each iteration, we update the policy parameters using an estimated policy gradient $H_k$ learned from data of the queueing model.
	In the subsequent sections, we give detailed regret analysis that can be used to establish optimal algorithm parameters (Section \ref{sec: framework}) and develop an efficient gradient estimator (Section \ref{sec: direct}).
\end{remark}

\subsection{System Dynamics under GOLiQ}\label{subsec: queueing dynamics}
We explain explicitly the dynamic of the queueing system under GOLiQ, with the system starting empty. We first define notations for relevant performance functions.  For $k\geq 1$, let $T_k$ be the length of cycle $k$ in the units of time, and let $D_k$ be the total number of customers who enter service in cycle $k$.  For  {$n= 1,2,...,D_k$}, let $W_n^k$ be the waiting time of the $n^{\rm th}$ customer that enters service in cycle $k$.  {We define  $W_0^k\equiv W_{D_{k-1}}^{k-1}$.} 
%Define $Q_n^k$ as the length of the waiting queue left by the $n^{\rm th}$ customer upon entering service in cycle $k$. Recall that $D_k$ is the number of customers that enter service in cycle $k$, define $Q_k$ as the number of remaining customers in the system right after the ${D_{k-1}}^{\rm th}$ customer in cycle $k-1$ enters service.  
We use the two i.i.d. random sequences $V_n^k$ and $U_n^k$ to construct the service and inter-arrival times in cycle $k$, $n=1,2,...,D_k$.  {In particular, $V_n^k$ corresponds to the service time of the customer $n-1$, and $U_n^k$ corresponds to the inter-arrival time between customers $n-1$ and $n$ in cycle $k$.} Let $\lambda_k\equiv \lambda(p_{k})$. Last, we use $Q_k$ to denote the number of existing customers (those who arrive in previous cycles) at the beginning of cycle in $k$, with $Q_1=0$. We will have $Q_k\geq 1$ for $k\geq 2$, as we shall explain soon, according to our updating procedure.
The detailed dynamics of the queueing system in cycle $k$ is summarized as follows:
\begin{itemize}
	\item {\bf Updating the control policy.} In cycle $k$, we adopt the pricing and staffing policy $(p_k,\mu_k)$. The service time of customer $n-1$ in cycle $k$ is $S_{n}^k=V_{n}^k/\mu_k$ for $n=1, ..., D_k$. Cycle $k$ ends as soon as a total number of $D_k$ (of which the value is to be determined later) customers have entered service. So, customer $D_k$ will receive service in cycle $k+1$ (with service time $S_1^{k+1}$) and the queue leftover consists of at least one customer, i.e., $Q_{k+1}\geq 1$ for a new cycle $k+1$, which begins under a new policy $(p_{k+1},\mu_{k+1})$ as follows:
	\begin{itemize}
		\item {\bf  Service rate.} The service rate is updated to $\mu_{k+1}$ immediately as the new cycle begins, so that all existing customers will undergo service times with rate $\mu_{k+1}$.
		\item {\bf Service fee.}
		The price remains $p_k$ %for all existing customers 
		at the beginning of cycle $k+1$ and evolves to $p_{k+1}$ immediately after the first new customer arrives in the new cycle; we charge this customer with $p_{k}$ (because its interarrival time is modulated by $p_{k}$) and all subsequent customers in cycle $k+1$ with $p_{k+1}$. 
	\end{itemize}
	\item {\bf Leftovers from previous cycles.}
	For $k\geq 2$, at the beginning of cycle $k$, there are $Q_k-1$ customers waiting in queue indexed by $n$ from 1 to $Q_k-1$. The  customer who just enters service  is indexed by 0. 
	We update the price from $p_{k-1}$ to $p_k$ right after the first new customer (indexed by $Q_k$) arrives in a new cycle. As a consequence, the prices charged to customers $1, 2, ...,Q_k$ are not yet updated to $p_k$. Denote by $p_n^k$ and $\lambda_n^k\equiv \lambda(p_n^k)$ as the price and arrival rate for customer $n$  in cycle $k$, respectively, for $1\leq n\leq Q_k$. The corresponding interarrival time is $\tau_n^k=U_n^k/\lambda_n^k$. In case $Q_{k-1}>D_{k-1}$, some queueing leftover are customers from earlier cycles. So here $p_n^k\in\{p_1, p_2,...,p_{k-1}\}$. In addition, in case $Q_k>D_k$, part of $Q_k$ will continue to remain in cycle $k+1$ and we will have, for example, $p^{k+1}_1=p^k_{D_k+1}$.
	\item {\bf New arrivals.}
	We denote interarrival times for new customers in cycle $k$ by  $\tau_n^k = U_n^k/\lambda_k$ for $n=Q_{k}+1,...,D_k$ if $D_k\geq Q_k+1$. (As will soon become clear, the case $D_k \leq Q_k$ is a rare event with a negligible probability under appropriate algorithm settings, see Remark \ref{rmk: left-overs}.)
	\item {\bf Customer delay.}
	Customers' waiting times  in cycle $k$ are characterized by the recursions
	\begin{equation}\label{eq:RecDelay}
		W_n^k =
		\begin{cases}
			\left(W_{n-1}^k + \frac{V^k_n}{\mu_k} - \frac{U^k_n}{\lambda^k_n}\right)^+ & \text{ for }1\leq n\leq Q_k\wedge D_k;\\
			\left(W_{n-1}^k + \frac{V^k_n}{\mu_k} - \frac{U^k_n}{\lambda_{k}}\right)^+ & \text{ for } (Q_k+1)\wedge{(D_k+1)}\leq n\leq D_k.
		\end{cases}, \quad W_0^k = W_{D_{k-1}}^{k-1},
	\end{equation}
	where $x^+\equiv \max\{x,0\}$.
	\item {\bf Server's busy time.}
	The age of the server's busy time observed by customer $n$ upon arrival, which is the length of time the server has been busy since the last idleness, is given by the recursions
	\begin{equation}\label{eq:RecBusyPeriod}
		X_n^k =
		\begin{cases}
			\left(X_{n-1}^k + \frac{U^k_n}{\lambda^k_n}\right){\bf 1}_{\{W_n^k>0\}}& \text{ for }1\leq n\leq Q_k\wedge D_k;\\
			\left(X_{n-1}^k + \frac{U^k_n}{\lambda_{k}}\right){\bf 1}_{\{W_n^k>0\}}& \text{ for } (Q_k+1)\wedge{ (D_k+1)}\leq n\leq D_k.
		\end{cases}, \quad X_0^k = X_{D_{k-1}}^{k-1},
	\end{equation}
	where the indicator ${\bf 1}_A$ is 1 if $A$ occurs and is 0 otherwise.
\end{itemize}
%\begin{remark}
We provide explanations for \eqref{eq:RecDelay} and \eqref{eq:RecBusyPeriod}. First, recursion \eqref{eq:RecDelay} simply follows from Lindley's equation.
Next, recursion \eqref{eq:RecBusyPeriod} follows from the fact that, for customer $n$, if the queue is empty upon its arrival, the observed busy time is simply 0 by definition; otherwise, the server must have been busy since the arrival of the previous customer and therefore, the observed busy time by customer $n$ should extend that of customer $n-1$ by an additional inter-arrival time.
%Regarding , note that if customer $n$ observes a busy server upon arrival, i.e., $W_n^k>0$, the interarrival time will be added to the existing server busy period. On the other hand, if $W_n^k>0$, the serve must be idle so the age of the busy period is 0.
As we shall see later, both the delay and busy time observed by customers will be important ingredients (i.e., data) for building the gradient estimator of the online learning algorithm.
%\end{remark}
\begin{remark}[Clearance of the leftover $Q_k$]\label{rmk: left-overs}
	As explained above,  $Q_k$ is random and unbounded, while in our algorithm design, the cycle length $D_k$ is deterministic. So it is indeed possible the remaining queue content may not be all cleared in cycle $k$ (i.e., $D_k<Q_k$). We will see later in the regret analysis that our choice of $D_k$ leads to a small probability of uncleared leftovers  and thus the impact of the rare event $\{D_k<Q_k\}$ is negligible.
\end{remark}

In Figure \ref{fig:timeline}, we further illustrate how the service price and service rate are updated by showing the ordering of all relative events as a new cycle begins. We emphasize that (i) the service rate $\mu_{k-1}$ is updated to $\mu_k$ immediately when a new cycle $k$ begins, which is triggered as soon as the last one of $D_{k-1}$ customers enters service; and (ii) the service price $p_{k-1}$ is updated to $p_k$ only after the first external arrival occurs in the new cycle $k$ (we honor our previous prices for all customers who arrive in the previous cycle).
\begin{figure}
	\centering
	\includegraphics[width=0.9\linewidth]{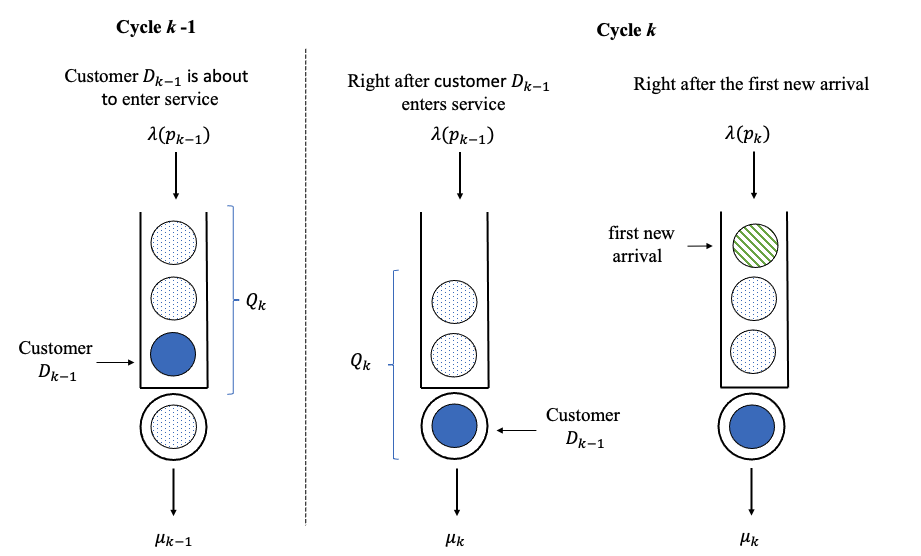}
	\caption{On the timing of the update of $p_k$ and $\mu_k$ under GOLiQ.}
	\label{fig:timeline}
\end{figure}
%\textcolor{red}{Note that in the time-varying queue, the arrival rate is always bounded by the service as $\lambda(\underline{p})<\underline{\mu}$. As a consequence,  we can establish a uniform bound on the queueing processes, for all possible control parameter sequence $(\mu_k,p_k)\in\mathcal{B}$ and $D_k\geq 1$.
	
	We end this section by providing a uniform boundedness result for all relevant queueing functions. This result below will be used in the next sections to establish desired regret bounds. The proof follows from a stochastic ordering approach and is given in Section \ref{subsect: uniform bound}.
	\begin{lemma}\textbf{$($Uniform boundedness of relevant queueing functions$)$}\label{lmm: uniform bound}\\
		Under Assumptions \ref{assmpt: uniform} and \ref{assmpt: light tail},  there exists a finite positive constant $M>0$ such that for any sequences $(\mu_k,p_k)\in\mathcal{B}$ and $D_k\geq 1$, we have, for all $k\geq 1$, $1\leq n\leq D_k$ and $1\leq m\leq 4$, and $\eta>0$ as defined in Assumption \ref{assmpt: light tail},
		\begin{equation*}%\label{eq: uniform bound}
			\EE[(W_n^k)^m], \quad \EE[(X_n^k)^m], \quad \EE[(Q_k)^m], \quad \EE[\exp(\eta W_n^k)]\quad \text{and}\quad \EE[\exp(\eta Q_k)]
		\end{equation*}
		are all bounded by $M$.
		%	where $W_n^k$, $X_n^k$ are the waiting time and observed busy period defined by \eqref{eq:RecDelay} and \eqref{eq:RecBusyPeriod}, respectively, and $Q_n^k$ is queue length $($in waiting buffer$)$ left by the $n^{\rm th}$ customer upon entering service in cycle $k$.
	\end{lemma}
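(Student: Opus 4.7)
My plan is to reduce every bound to a \emph{single} worst-case reference $GI/GI/1$ queue via stochastic domination, and then invoke classical tail estimates for stable light-tailed queues.

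\textbf{Step 1 (Reference queue and pathwise domination).} Set $\bar\lambda \equiv \lambda(\underline p)$; by Assumption~\ref{assmpt: uniform}(c) we have $\bar\lambda<\underline\mu$. Concatenate the driving sequences across cycles into global i.i.d.\ sequences $\{U_n\},\{V_n\}$ (this is consistent with the cycle-by-cycle construction in Section~\ref{subsec: queueing dynamics}), and define the reference Lindley recursion
\begin{equation*}
\hat W_n=\bigl(\hat W_{n-1}+V_n/\underline\mu-U_n/\bar\lambda\bigr)^+,\qquad \hat W_0=W_0^1.
\end{equation*}
Since $\lambda$ is non-increasing (Assumption~\ref{assmpt: uniform}(a)) and $p_k\in[\underline p,\bar p]$, $\mu_k\in[\underline\mu,\bar\mu]$, every realised inter-arrival time exceeds $U_n/\bar\lambda$ and every realised service time is dominated by $V_n/\underline\mu$. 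By monotonicity of the Lindley map in initial value and in each increment, a single induction over the global customer index—crossing cycle boundaries via $W_0^k=W_{D_{k-1}}^{k-1}$—delivers $W_n^k\le \hat W_{N(k,n)}$ a.s., where $N(k,n)$ is the global index of the $n$th customer served in cycle $k$.

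\textbf{Step 2 (Uniform moment and MGF bounds for $W$).} I verify that the choice $a<(\underline\mu-\bar\lambda)/(\underline\mu+\bar\lambda)$ together with the inequalities $\phi_U(-\theta)<-(1-a)\theta-\gamma_0$ and $\phi_V(\theta)<(1+a)\theta-\gamma_0$ in Assumption~\ref{assmpt: light tail} translate, after scaling, into the existence of some $\theta^\star>0$ and $\gamma^\star>0$ with
\begin{equation*}
\log\EE\bigl[\exp\bigl(\theta^\star(V_n/\underline\mu-U_n/\bar\lambda)\bigr)\bigr]\le -\gamma^\star<0 .
\end{equation*}
This is the standard ``negative-drift'' Cramér condition for the reference queue. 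Loynes' construction and a Chernoff bound applied uniformly over the supremum random walk then give $\EE[\exp(\theta^\star \hat W_\infty)]\le C_0<\infty$, and the transient stochastic bound $\hat W_n \le_{\mathrm{st}} \hat W_0 + \sup_{j\ge 0}\sum_{i=1}^j(V_i/\underline\mu - U_i/\bar\lambda)$ lifts this to a uniform-in-$n$ MGF bound. Shrinking $\theta^\star$ if necessary so that $\theta^\star\le\eta$ yields $\EE[\exp(\eta W_n^k)]\le M$; all polynomial moments of order $\le 4$ follow immediately.

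\textbf{Step 3 (Busy time $X_n^k$ and queue length $Q_n^k$).} For the busy time, the recursion \eqref{eq:RecBusyPeriod} and the dynamics imply $X_n^k + W_n^k \le B_n^k$, the length of the busy period containing the $n$th customer, which in turn is stochastically dominated by a busy period of the reference queue. Busy periods of a stable $GI/GI/1$ queue with light-tailed primitives have finite MGF in a neighbourhood of $0$ (by a standard generating-function/first-passage argument applied to the dual random walk), yielding the first four moments of $X_n^k$ bounded uniformly. For $Q_n^k$, note that $Q_n^k$ equals the number of arrivals that occur while customer $n$ waits, so $Q_n^k \le_{\mathrm{st}} \hat A(\hat W_{N(k,n)})$ where $\hat A$ is the renewal counting process with inter-renewal law $U_j/\bar\lambda$. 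Since $\hat A(t)$ has an exponential MGF bound $\EE[\exp(\theta \hat A(t))]\le \exp(c_\theta t)$ for small $\theta$, conditioning on $\hat W$ and using the MGF bound from Step 2 (with a sufficiently small $\theta$) gives $\EE[\exp(\eta Q_n^k)]\le M$, and hence uniformly bounded moments.

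\textbf{Main obstacle.} The routine Loynes coupling handles $W_n^k$, but the queue-length MGF bound requires combining two independent sources of randomness—a light-tailed waiting time with a renewal counting process evaluated at that random time—while simultaneously guaranteeing that the pathwise domination survives the non-stationary policy switches $x_{k-1}\to x_k$. The key is that a \emph{single} reference queue with parameters $(\underline\mu,\bar\lambda)$ dominates every cycle simultaneously, so the uniform stability gap $\underline\mu-\bar\lambda>0$ and the uniform Cramér condition from Assumption~\ref{assmpt: light tail} can be applied once and for all, without any cycle-dependent bookkeeping.
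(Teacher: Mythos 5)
Your proposal is correct and follows essentially the same route as the paper: both dominate every cycle simultaneously by the single worst-case $GI/GI/1$ queue with parameters $(\underline{\mu},\lambda(\underline p))$, which exists precisely because of the uniform stability gap in Assumption~\ref{assmpt: uniform}(c), and then invoke classical moment/MGF bounds for stable light-tailed queues (the paper cites Asmussen Ch.~10 where you spell out Loynes/Chernoff, busy-period, and renewal-counting arguments). The only cosmetic difference is in handling $X_n^k$: the paper keeps the dominating queue's busy-time process and corrects by the factor $\lambda(\underline p)/\lambda(\bar p)$, whereas you bound $X_n^k+W_n^k$ by the containing busy period's length; both are valid.
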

	%\textcolor{red}{move definitions of $W_n^k$, $X_n^k$, $Q_n^k$ to beginning of Sec.2.3?}
	%%%%%%%%%%%%%%%%%%%%%%%%%%%%%%%%%%%%%%%%%%%%%%%%%
	%%%%%%%%%%%%%%%%%%%%%%%%%%%%%%%%%%%%%%%%%%%%%%%%%
	\section{Regret Analysis}\label{sec: framework}
	The online learning approach described in Section \ref{subsec: algorithm outline} is a data-driven method, it should contunue to generate improved solutions that will eventually converge to the true optimal solution as the server's experience accumulates (by serving more and more customers). The performance of GOLiQ is measured by the so-called \textit{regret}, which can be interpreted as the cost to pay, over the time or the number of samples, for the algorithm to learn the optimal policy. In this section, we give a formal definition of the regret  and conduct the regret analysis for our online learning algorithm. %In Section \ref{subsec: regret analysis},

	%\subsection{Regret Decomposition}\label{subsec: regret analysis}
	%Following the standard literature, we shall measure the performance of the online SGD algorithm by its regret. Recall that $D_k$ is the number of customers that enter service in cycle $k$ and $T_k$ is of length cycle $k$.
	The expected net cost of the queueing system incurred in cycle $k$ is
	\begin{align}\label{regret_rhok}
		\rho_k = \EE\left[\sum_{n=1}^{Q_k\wedge D_k}(h_0(W_n^k+S_n^k)-p^k_n)+\sum_{n=Q_k+1}^{D_k}(h_0(W_n^k+S_n^k)-p_{k})+c(\mu_k)T_k\right],
	\end{align}
	where the summation $\sum_{n=Q_k+1}^{D_k}\cdot$ is $0$ in case $D_k<Q_k+1$. The total regret accumulated in the first $L$ cycles is
	\begin{align}\label{regret_def}
		R(L)\equiv \sum_{k=1}^L R_k, \quad \text{where}\quad R_k\equiv \rho_k-f(\mu^*,p^*)\EE[T_k]
	\end{align}	
	is regret in cycle $k$ (the expected system  cost in cycle $k$ minus the optimal cost).
	%. Then, the regret $R_k$ of cycle $k$ can be decomposed as
	\begin{remark}
		Following \cite{Huh2009} and \cite{Shi2020}, our regret defined in \eqref{regret_def} is computed by accumulating the difference between the steady-state maximum profit under $(\mu^*,p^*)$ and 
		the expected profit earned under GOLiQ. 
		However, one may find such a definition to be somewhat too demanding; it appears to be more reasonable if we were to benchmark with the nonstationary dynamics under $(\mu^*, p^*)$,  rather than the steady-state performance. Nevertheless, our numerical studies confirm that the nuance of the two aforementioned regret definitions is negligible. See Section \ref{sec:RegretAlternative}. 
	\end{remark}

	\paragraph{\textbf{Separation of regret.}} To treat the total regret defined in \eqref{regret_def},  we separate it into two parts: \textit{regret of nonstationarity} which quantifies the error due to the system's transient performance, and \textit{regret of suboptimality} which accounts for the suboptimality error due to the present policy. In detail, we write
	\begin{align}\label{eq:regretk}
		R_k= \underbrace{(\rho_k -\EE[f(\mu_k,p_k)T_k])}_{\equiv R_{1,k}} + \underbrace{\EE[T_k(f(\mu_k,p_k)-f(\mu^*,p^*))]}_{\equiv R_{2,k}},
	\end{align}
	so that
	\begin{align}\label{regret_decomp}
		R(L)= \sum_{k=1}^L R_{1,k} + \sum_{k=1}^L R_{2,k}\equiv R_1(L) + R_2(L).
	\end{align}
	Intuitively, $R_{1,k}$ measures the performance error due to transient queueing dynamics (regret of nonstationarity), while $R_{2,k}$ accounts for the suboptimality error of control parameters $(\mu_k,p_k)$ (regret of suboptimality).
	
	%and the total regret accumulated in the first $L$ cycles is $R(L)\triangleq \sum_{k=1}^L R_k$.
	%Let's denote by $M_k = \sum_{i=1}^k D_k$ as the total number of customers that have entered service in the first $k$ cycles. Let $(\mu^*,p^*)$ be the optimal solution to (INSERT EQUAQTION NUMBER), then the total regret in the first $L$ cycles is defined as
	%$$R(L) = E\left[\sum_{k=1}^L\sum_{j=1}^{D_k}(h_0(W_{n}^k +S^k_n)-p_k) +\sum_{k=1}^L c(\mu_k)T_k\right] - \sum_{k=1}^L T_kf(\mu^*,p^*).$$
	%Then, the total regret $R(L)$ can be decomposed into two parts, i.e. $R(L) = R_1(L)+R_2(L)$ with
	%\begin{equation*}
	%\begin{aligned}
	%R(L) &= \sum_{k=1}^L E[T_k](f(\mu_k,p_k)-f(\mu^*,p^*))+\sum_{k=1}^L (\rho_k-f(\mu_k,p_k)E[T_k]) \\
	%&\triangleq \sum_{k=1}^L R_{1,k}+\sum_{k=1}^L R_{2,k}\triangleq R_1(L)+R_2(L).
	%\end{aligned}
	%\end{equation*}
	%Intuitively, $R_1(L)$ and $R_{1,k}$ are the total regret and regret in cycle $k$, respectively, caused by the suboptimality of control parameters $(\mu_k,p_k)$, and $R_2(L)$ and $R_{2,k}$ by transient behavior of the queueing system. As we will see later, such transient behavior not only contributes to $R_2(L)$ but also contributes to $R_1(L)$ by affecting the accuracy of the gradient estimator $H_k$. As a consequence, how to control the transient behavior of the queueing system is crucial to our algorithm design and regret analysis.
	
	In what follows, we will analyze the two terms $R_1(L)$ and $R_2(L)$ separately. To treat $R_1(L)$, we develop in  Section \ref{subsec: transient analysis} a new framework to analyze the transient queueing behavior using the coupling technique (Theorem \ref{thm: non-stationary error}). The development of the theoretical bound for $R_2(L)$ is given in Section \ref{subsec: convergence rate} (Theorem \ref{thm: broadiezeevi}).
	%, we provide a theoretic bound for the convergence rate of the control parameters. These theoretic results provide us a general framework to build regret bound for general onlin SGD algorithms on $GI/GI/1$ queue.
	Results in these sections provide convenient conditions that facilitate the convergence and regret bound analysis of our GOLiQ algorithm for GI/GI/1 queues (which is to be given in Section \ref{sec: direct}). The roadmap of the theoretical analysis is depicted in Figure \ref{fig:ProofSteps}.
	%Building upon the analysis in Sections \ref{subsec: transient analysis} and \ref{subsec: convergence rate}, we will present our detailed online SGD algorithm in Section \ref{sec: direct}.% and \ref{sec: indirect}, we will use this framework to develop specific online SGD algorithms in different application settings with provable regret bounds.
	
	\begin{figure}[h]
		\vspace{-0.1in}
		\centering
		\includegraphics[width=1.02\textwidth, height = 3.2in]{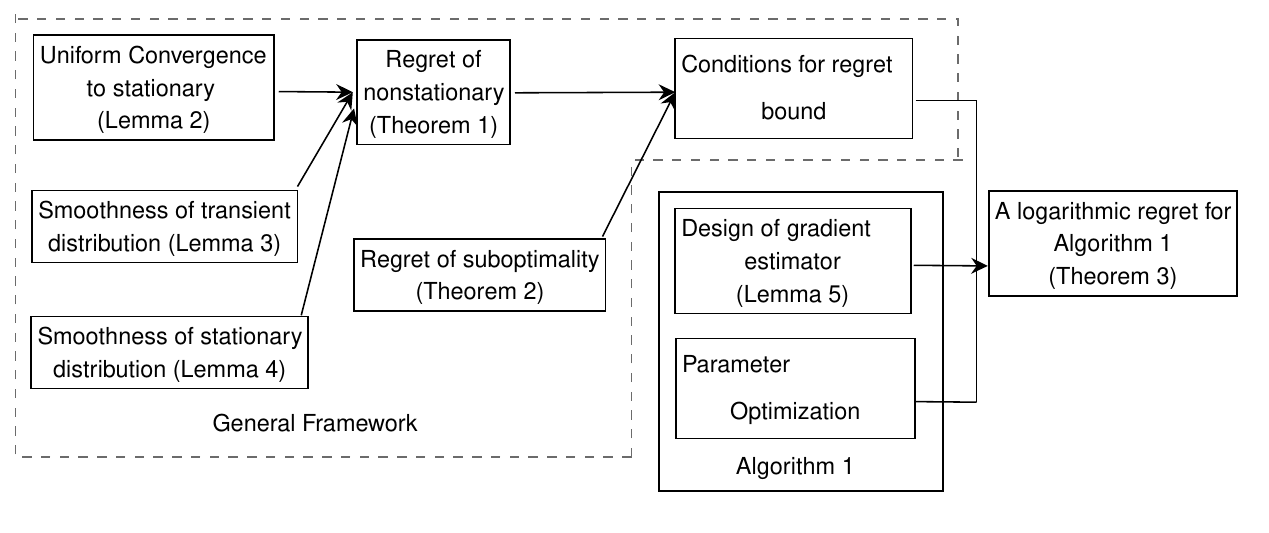}
		\vspace{-0.35in}
		\caption{Roadmap of regret analysis and algorithm design}\label{fig:ProofSteps}
		\vspace{-0.1in}	
	\end{figure}
	
	\subsection{Regret of Nonstationarity}\label{subsec: transient analysis}
	In this part, we analyze the transient queueing dynamics, base on which we develop a theoretical upper bound for $R_1(L)$. As we shall see later in Section \ref{sec: direct}, this analysis is also essential to bounding the bias $B_k$ and variance $\mathcal{V}_k$ of the gradient estimators for GOLiQ.
	
	\textbf{A crude $O(L)$ bound.} Roughly speaking, since the parameters $\mu, p$ and functions $\lambda(\cdot)$, $c(\cdot)$ are all bounded, the regret $R_1(L)$ is in the same order as the transient bias of the waiting time process, i.e.,
	\begin{equation*}
		\begin{aligned}
			R_1(L)\approx \sum_{k=1}^L O\left(\sum_{n=1}^{D_k}\left(\EE[W_n(\mu_k,p_k)]-\EE[W_\infty(\mu_k,p_k)]\right) \right).
		\end{aligned}
	\end{equation*}
	Here we use $W_\infty(\mu,p)$ to denote the steady-state waiting time of the $GI/GI/1$ queue with parameter   $(\mu,p)\in \mathcal{B}$. Under the uniform stability condition (Assumption \ref{assmpt: uniform}), it is not difficult to show that there exist positive constants $\gamma>0$ and $K>0$, independent of $k$ and $(\mu_k,p_k)$ such that
	$$\left|\EE[W_n^k] - \EE[W_\infty(\mu_k,p_k)]\right|\leq e^{-\gamma n}K.$$
	Then, as a direct consequence,  we have
	$$\sum_{n=1}^{D_k}\left(\EE[W_n(\mu_k,p_k)]-\EE[W_\infty(\mu_k,p_k)]\right)\leq \frac{K}{1-e^{-\gamma}}~\Rightarrow~ R_1(L)=O(L).$$
	An analogue of the above $O(L)$ bound is given by \cite{Huh2009} (Lemma 11) in an inventory model.
	%See  Lemma 11 of \cite{Huh2009} for an $O(L)$ bound for the regret of nonstationarity in an inventory model and such an $O(L)$ regret bound leads to an overall regret of $O(T^{2/3})$.
	
	\textbf{An improved $o(L)$ bound.} In the rest of this subsection, we will conduct a more delicate analysis on the transient performance of the queueing system, and our analysis will render a (tighter) sub-linear bound $R_1(L)=o(L)$ (of which the exact order depends on the concrete algorithm, as we shall see later). %With this improved bound, we will show that, under the same convexity assumption %and the same bound for $R_1(L)$
	%as in \cite{Huh2009}, we can establish a total regret bound of $O\left(\sqrt{T\log{(T)}}\right)$ in \ref{subsect: weakconvx}.
	
	%An analogue of this $O(L)$-bound on regret caused by transient behavior for inventory model is obtained in Lemma 11 of \cite{Huh2009}. In this section, we carry out a more sophisticated analysis on the transient behavior of the queueing system and  obtain a sub-linear bound for $R_1(L)$ as formally stated below.
	\begin{theorem}\label{thm: non-stationary error}\textbf{$($Regret of nonstationarity$)$} Suppose that Assumptions \ref{assmpt: uniform} and \ref{assmpt: light tail} hold. In addition, assume that the following conditions are satisfied for some constant $K_2>0$ and $0<\alpha\leq 1$:
		\begin{enumerate}
			\item[$(a)$] $\lceil6\log(k)/\min(\gamma,\eta)\rceil\leq D_k\leq K_2k^{2-\alpha}$;
			\item[$(b)$]  $\EE[\|x_k-x_{k+1}\|^{2}]\leq K_2 k^{-2\alpha}$,
		\end{enumerate}
		where the constants $\eta$ and $\gamma$ are defined in Assumption \ref{assmpt: light tail}. Then, there exists a positive constant $K>0$ such that
		\begin{equation}\label{eq:R_1}
			R_{1,k}\leq K\cdot k^{-\alpha}\log(k), \quad k\geq 2~ \quad\text{and}\quad  R_1(L) \leq K\sum_{k=1}^L k^{-\alpha}\log(k), \quad L\geq 2.
		\end{equation}
		
	\end{theorem}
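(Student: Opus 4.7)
The plan is to bound $R_{1,k}$ via a coupling argument. First I would expand $\rho_k - f(\mu_k,p_k)\EE[T_k]$ using \eqref{eq:obj} together with a Wald-type identity for $\EE[T_k]$, so that, up to bounded constants, $R_{1,k}$ reduces to a transient-bias term plus a carryover term:
\begin{equation*}
R_{1,k} \;=\; h_0\lambda_k\sum_{n=Q_k+1}^{D_k}\bigl(\EE[W_n^k]-\EE[W_\infty(\mu_k,p_k)]\bigr) \;+\; \text{(contribution from the $Q_k$ carryover customers)}.
\end{equation*}
The carryover contribution is handled by Lemma \ref{lmm: uniform bound} (uniform bound on $\EE[Q_k]$) together with the fact that $|p_k-p_{k-1}|$ and $|\lambda_k-\lambda_{k-1}|$ are both $O(\|x_k-x_{k-1}\|)$ by Lipschitzness of $\lambda$.

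For the transient term, I would construct on the same probability space a stationary sequence $\tilde W_n^k$ driven by the \emph{same} $(V_n^k,U_n^k)$ but started from $\tilde W_0^k\sim W_\infty(\mu_k,p_k)$. A standard Lindley coupling under Assumption \ref{assmpt: light tail} yields, uniformly over $(\mu,p)\in\mathcal B$,
\begin{equation*}
\bigl|\EE[W_n^k]-\EE[W_\infty(\mu_k,p_k)]\bigr|\le C\,e^{-\gamma n}\,\bigl(1+\EE[W_0^k]+\EE[W_\infty(\mu_k,p_k)]\bigr),
\end{equation*}
with $\gamma$ the constant of Lemma \ref{lmm: thm1 1}. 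Summing $\sum_n e^{-\gamma n}$ gives the crude $O(1)$-per-cycle bound; the sharpening to $O(k^{-\alpha})$ will come from replacing the prefactor by a quantity that vanishes in $k$.

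The critical step is therefore to show $\EE[W_0^k]-\EE[W_\infty(\mu_k,p_k)] = O(k^{-\alpha})$. I would split it as
\begin{equation*}
\bigl(\EE[W_{D_{k-1}}^{k-1}]-\EE[W_\infty(\mu_{k-1},p_{k-1})]\bigr) + \bigl(\EE[W_\infty(\mu_{k-1},p_{k-1})]-\EE[W_\infty(\mu_k,p_k)]\bigr).
\end{equation*}
The first difference is the transient bias at the end of cycle $k-1$; Condition (a), $D_{k-1}\ge 2\log(k-1)/\gamma$, combined with the coupling estimate above, makes it $O((k-1)^{-2})$. The second difference is controlled by smoothness of $(\mu,p)\mapsto\EE[W_\infty(\mu,p)]$ over the compact box $\mathcal B$ (the differentiability cited after Assumption \ref{assmpt: uniform}), which together with Cauchy--Schwarz and Condition (b) gives $O((k-1)^{-\alpha})$. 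Plugging this refined prefactor back into the coupling bound and summing over $n$ yields $R_{1,k}\le K k^{-\alpha}$, and the claimed bound on $R_1(L)$ follows immediately. The main obstacle will be executing the Lindley coupling so that the decay rate $\gamma$ is uniform across the entire policy box $\mathcal B$ rather than depending on the particular $(\mu_k,p_k)$, and making sure the ``bias inherited from cycle $k-1$'' estimate is non-circular; this last point is fine because the coupling inequality is an unconditional statement about a $GI/GI/1$ queue with fixed parameters started from an arbitrary initial condition, so it may be invoked cycle-by-cycle without appealing to the very theorem we are proving.
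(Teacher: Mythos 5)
Your overall architecture tracks the paper's: decompose $R_{1,k}$ into a transient-bias sum plus carryover and steady-state-shift terms, invoke an exponential-forgetting coupling with a rate $\gamma$ uniform over $\mathcal{B}$, and sharpen the per-cycle bound from $O(1)$ to $O(k^{-\alpha})$ by showing the initial state of cycle $k$ is close to the new stationary law, via (end-of-cycle-$(k-1)$ transient bias) plus (steady-state shift under the policy update). Your closing remark about non-circularity is also correct. But there is a genuine gap in what ``close'' has to mean.

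The forgetting inequality is driven by the expected \emph{pathwise} distance of the coupled initial conditions --- in the paper's Lemma \ref{lmm: thm1 1} the prefactor is $\EE\bigl[(2+e^{\theta W_0^1}+e^{\theta W_0^2})\,|W_0^1-W_0^2|\bigr]$ --- whereas your ``critical step'' only bounds the difference of expectations $\EE[W_0^k]-\EE[W_\infty(\mu_k,p_k)]$. These are not interchangeable: the Lindley map is nonlinear, so two initial laws with equal means generally produce different $\EE[W_n]$ for $n\ge 1$; the difference of means at time $0$ neither upper-bounds $\EE[|W_0^k-\tilde W_0^k|]$ nor propagates through the recursion. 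Consequently the piece you dispatch by ``smoothness of $(\mu,p)\mapsto\EE[W_\infty(\mu,p)]$'' must instead be a genuine \emph{coupling} of the two stationary laws on one probability space satisfying $\EE[|W_\infty(\mu_{k-1},p_{k-1})-W_\infty(\mu_k,p_k)|^m]=O(\|x_k-x_{k-1}\|^m)$, and for $m$ up to $4$, since the exponential weights in the forgetting prefactor are removed by Cauchy--Schwarz against the uniform exponential-moment bounds of Lemma \ref{lmm: uniform bound}. The paper supplies exactly this via a coupling-from-the-past construction (Lemma \ref{lmm: steady state continuity}), representing both stationary waiting times as running maxima of random walks driven by the same $(U_n,V_n)$ and bounding their gap by the observed busy period; Lipschitz continuity of the mean alone cannot substitute for it. The same issue touches your carryover term: the waiting times of the $Q_k$ inherited customers also carry a transient bias (the paper's $I_1$), and since $\EE[Q_k]$ is only $O(1)$ you cannot absorb that sum into a remainder controlled by $|p_k-p_{k-1}|$ --- it needs the same pathwise coupling estimate. (A smaller omission: the $D_k-\lambda_k\EE[T_k]$ discrepancy is not a clean Wald identity because the arrival rate changes mid-cycle and the cycle boundaries involve residual waiting times; the paper handles it by a separate telescoping/transient argument.) Once Lemma \ref{lmm: steady state continuity}-type pathwise control is in hand, the rest of your plan goes through essentially as in the paper.
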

	\begin{remark}
		As will become clear later in Section \ref{sec: direct}, we obtain a bound  $R_1(L)=O(\log(L)^{{2}})$ for Algorithm \ref{alg: direct} by validating Condition (b) in Theorem \ref{thm: non-stationary error} with $\alpha =1$, which is much tighter than the crude $O(L)$ bound. This $O(\log(L)^{{2}})$ bound for $R_1(L)$ is critical to achieving an overall logarithmic regret bound in the total number of served customers. % of $O((\log(M_L))^2)$.
		An explicit expression of constant $K$ is given in \eqref{eq: K}. %in $A, B, M, K_2$ and other model parameters that are  independent of $\mu$ and $p$.}
\end{remark}

\subsubsection{{Roadmap of the proof of Theorem \ref{thm: non-stationary error}}} Our point of departure in proving Theorem \ref{thm: non-stationary error} is to decompose $R_{1,k}$ into three terms. We shall split each cycle into a warm-up period consisting the first  $\tilde{d}_k=\lceil 5\log(k)/\min(\gamma,\eta)\rceil<D_k$ customers and the near-stationary period consisting of all remaining customers, where $\gamma, \eta>0$ are as defined in Assumption \ref{assmpt: light tail}. %(Note that $\eta>\theta$ and hence $\tilde{d}_k<D_k$.) 
%Then, the regret caused by transient error is further decomposed into 
The three parts are: transient error in the near-stationary period ($I_1$), transient error in the warm-up period ($I_2$) and the remaining error ($I_3$). The detailed separation is given below
\begin{equation*}
	\begin{aligned}
		R_{1,k}&=\rho_k-\EE[f(\mu_k,p_k)T_k]\\
		&=\EE\left[\sum_{n=1}^{Q_k\wedge D_k}(h_0(W_n^k+S_n^k)-p_n^{k})+\sum_{n=Q_k+1}^{D_k}(h_0(W_n^k+S_n^k)-p_{k})+c(\mu_k)T_k -f(\mu_k,p_k)T_k\right]\\
		%&=h_0\underbrace{\EE\left[\sum_{n=1}^{\red{(Q_k-1)\wedge D_k}} \left(W_n^k-W_\infty(\mu_k,p_{k-1})\right)\right]}_{I_1}
		%+ h_0\underbrace{\EE\left[\sum_{n=\red{Q_k}}^{D_k}\left(W_n^k-W_\infty(\mu_k,p_k)\right)\right]}_{I_2}\\
		%&~~~+h_0\underbrace{\red{\EE\left[\left((Q_k-1)\wedge D_k\right)\cdot\left(\EE[W_\infty (\mu_k,p_{k-1})]-\EE[W_\infty(\mu_k,p_{k})]\right)\right]}}_{I_3}\\
		%&~~~ + \underbrace{\left((D_k-\lambda_{k}\EE[T_k])(h_0 \EE[W_\infty(\mu_k,p_k)]+\frac{h_0}{\mu_k}-p_k)\right)+\EE\left[\red{\sum_{n=1}^{(Q_k-1)\wedge D_k}(p_k-p_n^k)}\right]}_{I_4}.\\
		& = h_0\underbrace{\EE\left[\sum_{n=\tilde{d}_k+1}^{D_k} \left(W_n^k-w(\mu_k,p_k)\right)\right]}_{\equiv I_1} +h_0\underbrace{\EE\left[\sum_{n=1}^{\tilde{d}_k} \left(W_n^k-w(\mu_k,p_k)\right)\right]}_{\equiv I_2}\\
		&~~~+\underbrace{\EE\left[(D_k-\lambda_{k}T_k)(h_0 w(\mu_k,p_k)+\frac{h_0}{\mu_k}-p_k)\right]+\EE\left[\sum_{n=1}^{Q_k\wedge D_k}(p_k-p_n^k)\right]}_{\equiv I_3}.
		%\triangleq & ~h_0( I_1+I_2+I_3)+I_4.
	\end{aligned}
\end{equation*}
The term $w(\mu,p)\equiv \EE[W_\infty(\mu,p)]$ is a function in $(\mu,p)$ and equals to the steady-state expected waiting time under parameter $(\mu,p)\in\mathcal{B}$. 
To prove $R_{1,k} = O(k^{-\alpha}\log(k))$, it suffices  to show that $I_i=O(k^{-\alpha}\log(k))$ for $i=1,2,3$. Below we  explain the main ideas of our treatment to $I_1$, $I_2$ and $I_3$:
\begin{itemize}
	\item \textbf{$I_1$:} We will first show that, after serving $d_k\equiv \lceil4\log(k)/\min(\gamma,\eta)\rceil<\tilde{d}_k$ customers, with a sufficiently high probability, all $Q_k$ existing customers have left the system and $\{W_n^k:n=d_k,...,D_{k}\}$ follows the dynamic of a GI/GI/1 queue with arrival rate $\lambda_{k}$ and service rate $\mu_k$. Then, we show that $W_n^k$, for $n\geq d_k$, will converge exponentially fast to the steady state (Lemma \ref{lmm: thm1 1}). Hence $W_n^k$ is close to  $W_\infty(\mu_k,p_k)$ for $n\geq \tilde{d}_k$, warranting a small   transient error $I_1$.
	\item \textbf{$I_2$:} Note  that the $\tilde{d}_k$ customers in the warm-up period include those leftovers from previous periods, and their arrival rates $\lambda_n^k$ are different from $\lambda_k$. To control the impact of such difference between $\lambda_n^k$ and $\lambda_k$,  we first establish almost sure Lipschitz continuity of waiting times (for queues having customer-heterogeneous arrival rates) with respect to the arrival rate sequence and the initial state (Lemma \ref{lmm: Lipschitz}). As a consequence, we can prove that $|\EE[W_n^k-w(\mu_{k-1},p_{k-1})]|=O(k^{-\alpha})$ taking advantage of the fact that the initial state $W_0^k=W_{D_{k-1}}^{k-1}$ is close to the steady-state $W_\infty(\mu_{k-1},p_{k-1})$. Then, we show that  the steady-state distribution is smooth in the parameter $(\mu, p)$ (Lemma \ref{lmm: steady state continuity}), i.e., $\EE[|w(\mu_{k-1},p_{k-1})-w(\mu_k,p_k)|]=O(\EE|\mu_k-\mu_{k-1}|+\EE|p_k-p_{k-1}|)=O(k^{-\alpha})$, which completes the analysis for $I_2$.
	\item \textbf{$I_3$:} The term $I_3$ will be under control because $W_{D_k}^k$ is close to the steady-state (Lemma \ref{lmm: thm1 1}) and $Q_k$ is uniformly bounded (Lemma \ref{lmm: uniform bound}).
\end{itemize}
Also see in Figure \ref{fig: Thm1RoadMap} for a graphical illustration. 
\begin{figure}
	\centering
	\includegraphics[scale = 0.5]{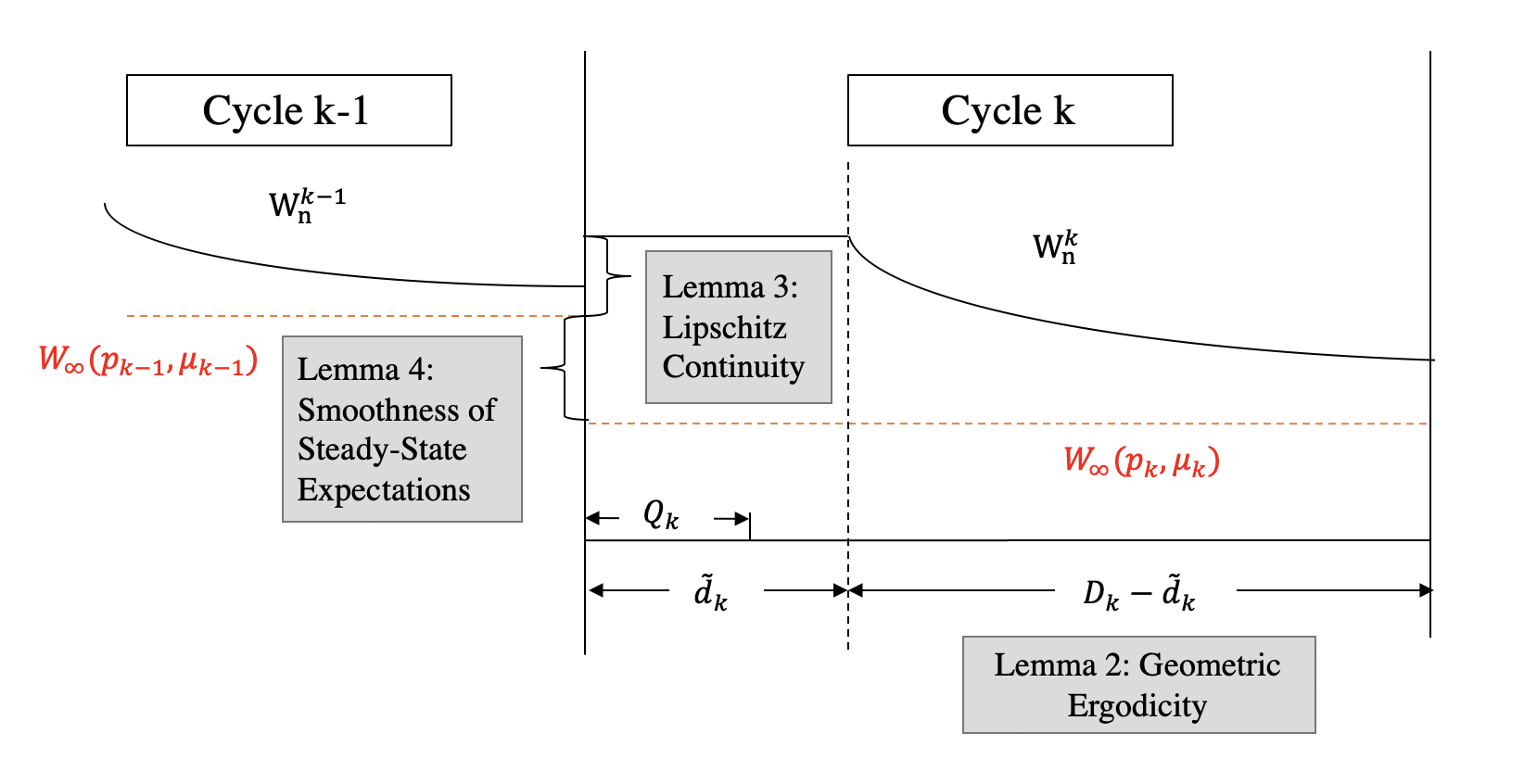}
	\caption{Roadmap of the analysis of the regret of nonstationarity.}\label{fig: Thm1RoadMap}
\end{figure}

Following the above roadmap, we next give detailed analysis for $I_i, i=1,2,3$ by establishing three lemmas (Lemmas \ref{lmm: thm1 1}--\ref{lmm: steady state continuity}). We believe that these results are not only essential to the transient analysis in the present paper, but may also be of independent interest for theoretic studies of other queueing models.\\

%[To show $R_{1,k}=O(k^{-\alpha})$ is equivalent to showing that $\EE[I_i]=O(k^{-\alpha})$ for $i=1, 2, 3 ,4$. \red{We will show later that $\{W_n^k:k=1,2,...,Q_k-1\}$ follows the dynamic of a GI/GI/1 queue with arrival rate $\lambda_{k-1}$ and service rate $\mu_k$, except on a set of negligible probability. As a consequence, }
%the terms $I_1$ and $I_2$ \red{maily} depend on (i) the rate at which $W_n^k$, the waiting time in a $GI/GI/1$ queue, converges to its steady state $W_\infty$ and (ii) the difference between the initial waiting times and the steady-state waiting times;
%The term $I_3$ relies on (iii) the smoothness of the steady-state waiting time with respect to parameters $(\mu,p)$.
%To prove Theorem \ref{thm: non-stationary error}, we will establish uniform bounds for (i) in Lemma \ref{lmm: thm1 1}, and for (iii) in Lemma \ref{lmm: steady state continuity}. We then show that (ii) can be bounded using the bounds for (i) and (iii). In this way, we obtain the bounds for $I_1$, $I_2$ and $I_3$. Finally, the term $I_4$ is controlled \red{basically because $W_{D_k}^k$ is close to the steady-state and $Q_k$ is uniformly bounded}.]\\
%Finally,  the term $I_4$ depends on the boundedness of $E[Q_k]$ and is relatively easy to analyze.

\textbf{Bounding $I_1$.}
We first establish the rate at which waiting times converge to their steady state distributions. For two given sequences $V_n$ and $U_n$, we say two $GI/GI/1$ queues with the same parameter $(\mu,p)\in \mathcal{B}$ are \textit{synchronously coupled} if their waiting times  $W^1_n$ and $W^2_n$ satisfy
$$W^i_n = \left(W^i_{n-1}+\frac{V_n}{\mu}-\frac{U_n}{\lambda(p)}\right)^+,\text{ for }i=1,2,\text{ and }n \geq 1,$$
i.e., the two systems share the same sequences of service and interarrival times.
The proof of Lemma \ref{lmm: thm1 1} is given in Section \ref{sec: transient analysis proofs}. % and the difference between initial waiting time and steady-state waiting times of different parameters, respectively.
%In Lemma \ref{lmm: thm1 1}, we show that two $GI/GI/1$ queues that are synchronously coupled will converge in probability exponentially fast.

\begin{lemma}\label{lmm: thm1 1}\textbf{$($Exponential loss of memory of initial state$)$}
	Suppose two $GI/GI/1$ queues with parameter $(\mu,p)\in\mathcal{B}$  are synchronously coupled with initial waiting times $W^1_0$ and $W^2_0$, respectively.
	Then, for the two positive constants $\gamma$ and $\theta$ defined in Assumption \ref{assmpt: light tail} and any $m\geq 1$, we have, conditional on $W^1_0$ and $W^2_0$,
	$$\EE\left[|W^1_n-W^2_n|^m~|~W^1_0,W^2_0\right]\leq e^{-\gamma n}(2+e^{\mu\theta W^1_0}+e^{\mu\theta W^2_0})|W_0^1-W^2_0|^m.$$
	%with $\gamma = \gamma_0\cdot \min(1/\bar{\mu},1/\lambda(\underline{p}))$,
\end{lemma}

In order to bound $I_1$, at the beginning of each cycle $k$, given $(\mu_k,p_k)$, we couple $W_0^k$ with $\bar{W}_0^k$ that is independently drawn from the steady-state waiting time distribution $W_\infty(\mu_k,p_k)$. The sequence $\bar{W}_n^k$ is defined as
$$\bar{W}_n^k = \left(\bar{W}_{n-1}^k+\frac{V_n^k}{\mu_k}-\frac{U_n^k}{\lambda_k}\right)^+, \text{ for all }1\leq n\leq D_k.$$
Then, by definition, conditional on $(\mu_k,p_k)$, $\EE[\bar{W}_n^k]=w(\mu_k,p_k)$ for all $1\leq n\leq D_k$, and therefore, 
$$\left|\EE[W_n^k-w(\mu_k,p_k)]\right|\leq \EE[|W_n^k-\bar{W}_n^k|].$$
As we will show in the proof of Corollary \ref{coro: convergence to stationarity}, $\{W_n^k:n=d_k+1,...,D_k\}$ is  coupled with $\bar{W}_n^k$ except on a set of negligible set, with $d_k\equiv \lceil 4\log(k)/\min(\gamma,\eta)\rceil<\tilde{d}_k$. As a result, we can use Lemma \ref{lmm: thm1 1} to construct a bound on $\EE[|W^k_n-\bar{W}^k_n|]$ for $n=\tilde{d}_k+1,...,D_k$.
\begin{coro}\label{coro: convergence to stationarity}
	Under the conditions of Theorem \ref{thm: non-stationary error}, there exists a constant $A\geq 1$ independent of $k$ and $(\mu_k,p_k)$, such that for all $k\geq 1$ and $n\geq d_k\equiv \lceil 4\log(k+1)/\min(\gamma,\eta)\rceil$,
	\begin{equation}
		\EE[|W_n^k - \bar{W}_n^k|]\leq e^{-\gamma(n-d_k)}A + 2Mk^{-2}.
	\end{equation}
	As a direct consequence, we have
	$I_1=O(k^{-\alpha})$.
\end{coro}

\textbf{Bounding $I_2$.}
We first show that the waiting times $W_n$ of a queueing model having customer-heterogeneous arrival rates are Lipschitz continuous with respect to the rates $(\mu_n,\lambda_n)$ and the initial state almost surely. 
\begin{lemma}\label{lmm: Lipschitz}\textbf{(Lipschitz continuity)}
	Consider two waiting time sequences $W_n$ and $\tilde{W}_n$ for $n\geq 1$ with initial values $W_0$ and $\tilde{W}_0$ respectively. Let $(\mu_n,\lambda_n)$ and $(\tilde{\mu}_n,\tilde{\lambda}_n)\in \mathcal{B}$ be the corresponding sequences of service and arrival rates, respectively, i.e.,
	$$W_n = \left(W_{n-1}+\frac{V_n}{\mu_n}-\frac{U_n}{\lambda_n}\right)^+\quad\text{and}\quad \tilde{W}_n = \left(\tilde{W}_{n-1}+\frac{V_n}{\tilde{\mu}_n}-\frac{U_n}{\tilde{\lambda}_n}\right)^+, \quad \text{ for }n\geq 1.$$
	Suppose there exist two constants $c_\mu, c_\lambda>0$ such that
	$$|\mu_n-\tilde{\mu}_n|\leq c_\mu \quad \text{and}\quad |\lambda_n-\tilde{\lambda}_n|\leq c_\lambda, \quad \text{ for all }n\geq 1.$$
	Then we have, for all $n\geq 1$, 
	%there exists $K>0$ independent of $(\mu_n,\lambda_n)$ and $(\tilde{\mu}_n,\tilde{\lambda}_n)$ such that
	%$$\EE[|W_n-\tilde{W}_n|]\leq |W_0-\tilde{W}_0|+\left(\frac{c_\mu}{\underline{\mu}}+\frac{c_\lambda}{\underline{\lambda}}\right)\max(X_n,\tilde{X}_n)+\frac{c_\mu}{\underline{\mu}}\max(W_n,\tilde{W}_n),$$
	$$|W_n-\tilde{W}_n|\leq |W_0-\tilde{W}_0|+\left(\frac{c_\mu}{\underline{\mu}}+\frac{c_\lambda}{\underline{\lambda}}\right)\max(X_n,\tilde{X}_n)+\frac{c_\mu}{\underline{\mu}}\max(W_n,\tilde{W}_n),$$
	where $X_n$ and $\tilde{X}_n$ are the corresponding observed busy periods. In particular, $X_n$ and $\tilde{X}_n$ satisfy the recursion \eqref{eq:RecBusyPeriod} defined in Section \ref{subsec: queueing dynamics} with any given initial values of $X_0\geq 0$ and $\tilde{X}_0\geq 0$.
\end{lemma}

\vskip 1ex

As discussed above, controlling $I_2$ also involves bounding the difference between the mean steady-state waiting times in two consecutive cycles. Hence,  we next establish a uniform high-order smoothness result for the steady-state waiting times with respect to the model parameter $(\mu,p)$.

\begin{lemma}\label{lmm: steady state continuity}\textbf{$($Smoothness in $\mu$ and $p)$}
	Suppose $(\mu_i,p_i)\in \mathcal{B}$ for $i=1,2$. Let $W_\infty(\mu_i,p_i)$ be the steady-state waiting time of the GI/GI/1 queue under parameter $(\mu_i,p_i)$, respectively. Then, the steady-state waiting times $(W_\infty(\mu_1,p_1),W_\infty(\mu_2,p_2))$ can be coupled such that, there exists a constant $B>0$ independent of $(\mu_i,p_i)$ satisfying that, for all  $1\leq m\leq 4$,
	$$\EE[|W_\infty(\mu_1,p_1)-W_\infty(\mu_2,p_2)|^m]\leq B\left(|\mu_1-\mu_2|^m+|p_1-p_2|^m\right),$$
	where a closed-form expression of constant $B$ is given in \eqref{eq: B}.
\end{lemma}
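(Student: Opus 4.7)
The plan is to use a synchronous coupling via Loynes' representation together with the argmax of the underlying random walk, treating the two parameters one at a time. By the triangle inequality
\[
|W_\infty(\mu_1,p_1)-W_\infty(\mu_2,p_2)|\leq |W_\infty(\mu_1,p_1)-W_\infty(\mu_2,p_1)|+|W_\infty(\mu_2,p_1)-W_\infty(\mu_2,p_2)|,
\]
it suffices to establish an $O(|\mu_1-\mu_2|^m)$ bound for the first term and an $O(|p_1-p_2|^m)$ bound for the second; they are structurally identical, so I will focus on the first.

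\textbf{Coupling and pointwise bound.} For each $(\mu,p)\in\mathcal{B}$, Loynes' theorem gives $W_\infty(\mu,p)\stackrel{d}{=}\sup_{n\geq 0}S_n(\mu,p)$ with $S_n(\mu,p)\equiv \sum_{k=1}^{n}\bigl(V_k/\mu-U_k/\lambda(p)\bigr)$, where $\{V_k\}$ and $\{U_k\}$ are the sequences from Assumption \ref{assmpt: light tail}. I couple $W_\infty(\mu_1,p_1)$ and $W_\infty(\mu_2,p_1)$ by driving both random walks with the \emph{same} $(V_k,U_k)$. Without loss of generality $\mu_1<\mu_2$, so $S_n(\mu_1,p_1)\geq S_n(\mu_2,p_1)$ pathwise and thus $W_\infty(\mu_1,p_1)\geq W_\infty(\mu_2,p_1)$ almost surely. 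Let $N^{\ast}\equiv\arg\max_{n\geq 0}S_n(\mu_1,p_1)$, which is a.s.\ finite by uniform stability. Since $W_\infty(\mu_2,p_1)\geq S_{N^{\ast}}(\mu_2,p_1)$ by definition of the supremum,
\[
0\leq W_\infty(\mu_1,p_1)-W_\infty(\mu_2,p_1)\leq S_{N^{\ast}}(\mu_1,p_1)-S_{N^{\ast}}(\mu_2,p_1)=\Bigl(\tfrac{1}{\mu_1}-\tfrac{1}{\mu_2}\Bigr)\sum_{k=1}^{N^{\ast}}V_k\leq \frac{\mu_2-\mu_1}{\underline{\mu}^{\,2}}\sum_{k=1}^{N^{\ast}}V_k.
\]

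\textbf{Moment control of the stopped sum.} It remains to show $\EE[(\sum_{k=1}^{N^{\ast}}V_k)^{m}]\leq C_m$ uniformly in $(\mu_1,p_1)\in\mathcal{B}$ for $m\leq 4$. Assumption \ref{assmpt: uniform}(c) gives a uniform negative drift $\EE[S_1(\mu_1,p_1)]=1/\mu_1-1/\lambda(p_1)<0$ bounded away from zero, and Assumption \ref{assmpt: light tail} (specifically \eqref{eq: gamma0}) provides an exponential martingale $\exp(\theta S_n(\mu_1,p_1))$ with a uniform exponential decrement. A standard Chernoff bound then yields $\PP(N^{\ast}>n)\leq C e^{-\gamma n}$ for constants $C,\gamma>0$ uniform over $\mathcal{B}$, so $\EE[(N^{\ast})^{q}]<\infty$ for all $q\geq 1$. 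Combined with $\EE[V^{2m}]<\infty$ (a consequence of Assumption \ref{assmpt: light tail}), H\"older's inequality $(\sum_{k=1}^{N^{\ast}}V_k)^{m}\leq (N^{\ast})^{m-1}\sum_{k=1}^{N^{\ast}}V_k^{m}$ followed by Cauchy--Schwarz produces the desired uniform moment bound. For the $p$-term, an identical coupling in $p$ (with $\mu_2$ fixed and WLOG $\lambda(p_1)>\lambda(p_2)$) yields $|W_\infty(\mu_2,p_1)-W_\infty(\mu_2,p_2)|\leq |1/\lambda(p_2)-1/\lambda(p_1)|\sum_{k=1}^{\widetilde N}U_k$, and the factor $|1/\lambda(p_2)-1/\lambda(p_1)|\leq \|\lambda'\|_\infty\lambda(\underline{p})^{-2}|p_1-p_2|$ by Assumption \ref{assmpt: uniform}(a) and compactness. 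Combining the two pieces through $|a+b|^{m}\leq 2^{m-1}(|a|^{m}+|b|^{m})$ produces the claim, with $B$ given explicitly by a product of $\underline{\mu}^{-2m}\vee \lambda(\underline{p})^{-2m}\|\lambda'\|_\infty^{m}$, the uniform moment bound $C_m$, and the combinatorial factor $2^{m-1}$.

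\textbf{Main obstacle.} The principal technical burden is establishing the \emph{uniform} exponential-moment control of $N^{\ast}$ across the whole box $\mathcal{B}$; this is precisely where the detailed constants $a,\gamma_0,\theta$ of Assumption \ref{assmpt: light tail} must be leveraged via the exponential change of measure for $S_n(\mu,p)$. Once $\PP(N^{\ast}>n)$ decays exponentially with rate uniform in $(\mu,p)$, the remaining H\"older and Cauchy--Schwarz estimates are routine and yield the closed-form constant $B$ referenced in \eqref{eq: B}.
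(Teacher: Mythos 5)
Your coupling is essentially the paper's: both use the Loynes/coupling-from-the-past representation $W_\infty(\mu,p)=\sup_{n\ge 0}S_n(\mu,p)$ driven by the same $(V_k,U_k)$, compare the two suprema at the argmax $N^*$ of the dominating walk, and split the two parameters via the triangle inequality and $|a+b|^m\le 2^{m-1}(|a|^m+|b|^m)$. The one genuine difference is the last step. You bound $\EE[(\sum_{k=1}^{N^*}V_k)^m]$ directly, by first proving a uniform exponential tail $\PP(N^*>n)\le Ce^{-\gamma n}$ over $\mathcal{B}$ via a Chernoff/change-of-measure argument and then applying H\"older; this works (the needed large-deviations estimate is exactly the one the paper proves for the hitting time $\Gamma_i$ in Lemma \ref{lmm: thm1 1}, so Assumption \ref{assmpt: light tail} suffices), but it forces you to redo that tail estimate for the argmax. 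The paper instead observes that $\sum_{k=1}^{N^*}V_k/\mu_1$ is precisely the elapsed time from the arrival of customer $-N^*$ (who starts a busy period) to the departure of customer $-1$, i.e.\ it equals $X_0^{\mu_1}+W_0^{\mu_1}$, the stationary observed busy time plus waiting time; its $m$-th moments are then uniformly bounded by $2^mM$ straight from Lemma \ref{lmm: uniform bound}, with no new tail analysis. The paper's identification is shorter and yields the cleaner closed-form constant in \eqref{eq: B}; your route is more self-contained but heavier. One small slip: in the $p$-term you bound $1/(\lambda(p_1)\lambda(p_2))$ by $\lambda(\underline{p})^{-2}$, but since $\lambda$ is non-increasing the correct uniform lower bound on $\lambda$ over $[\underline{p},\bar{p}]$ is $\lambda(\bar{p})$, so the factor should be $\lambda(\bar{p})^{-2}$ (as in the paper's $B_2$); this only affects the constant, not the validity of the argument.
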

%\begin{remark}
We adopt a ``\textit{coupling from the past}" (CFTP) approach in the proof of Lemma \ref{lmm: steady state continuity} (see Section \ref{sec: transient analysis proofs}). %In particular,  we explicitly construct the joint distribution of  $(W_\infty(\mu_1,p_1),W_\infty(\mu_2,p_2))$ using the idea of ``coupling from the past" (\cite{BlanchetChen_2015}).
%\end{remark}
Roughly speaking, CFTP is a synchronous coupling starting from infinite past. In the proof of Lemma \ref{lmm: steady state continuity}, we shall explicitly explain how to construct the CFTP.

Now we are ready to analyze  $I_2$. Essentially, we shall compare $\EE[W_n^k]$ in the warm-up period with $w(\mu_{k-1},p_{k-1})=\EE[W_\infty(\mu_{k-1},p_{k-1})]$. For each cycle $k$, recall that we have already coupled $W_n^{k-1}$ with a stationary sequence $\bar{W}_n^{k-1}$ in cycle $k-1$, {we then extend the sequence  $\bar{W}_n^{k-1}$ to cycle $k$} in the sense that
$$\bar{W}_{D_{k-1}+n}^{k-1} =\left(\bar{W}_{D_{k-1}+n-1}^{k-1}+\frac{V_n^k}{\mu_{k-1}}-\frac{U_n^k}{\lambda_{k-1}}\right)^+, \text{ for }n=1, 2, ..., D_{k}.$$
Then, conditional on $(\mu_{k-1},p_{k-1})$, $\EE[\bar{W}_{D_{k-1}+n}^{k-1}]=w(\mu_{k-1},p_{k-1})$. So we have
\begin{align*}
	\left|\EE[W_n^k-w(\mu_k,p_k)]\right|& \leq \left|\EE[W_n^k-w(\mu_{k-1},p_{k-1})]\right| + \EE\left[|w(\mu_{k-1},p_{k-1})-w(\mu_k,p_k)|\right]\\
	&\leq \EE\left[|W_n^k-\bar{W}_{D_{k-1}+n}^{k-1}|\right]+\EE\left[|w(\mu_{k-1},p_{k-1})-w(\mu_k,p_k)|\right].
\end{align*}
Bounding the first term by Lemma \ref{lmm: Lipschitz} and the second term by Lemma \ref{lmm: steady state continuity} yields the following bound on $I_2$.
\begin{coro}\label{coro: error of leftovers}
	Under the conditions of Theorem \ref{thm: non-stationary error}, for all $k\geq 2$ and $1\leq n\leq D_k$, we have
	\begin{equation}
		\EE[|W_n^k-w(\mu_k,p_k)|]=O(k^{-\alpha}). \quad	%\EE[|W_n^k-\bar{W}_n^k|^2]=O\left(k^{-2\alpha}D_k^2\right).
	\end{equation}
	As a direct consequence, $|I_2|=O(k^{-\alpha}\log(k))$.
\end{coro}

\textbf{Bounding $I_3$.} We complete our analysis on the regret of nonstationarity by showing that $I_3=O(k^{-\alpha})$. The proof of Corollary \ref{coro: I3} below basically follows from Lemma \ref{lmm: uniform bound} and Lemma \ref{lmm: thm1 1} with some similar argument as used in the proof of Corollary \ref{coro: error of leftovers}. 
\begin{coro}\label{coro: I3}
	Under the conditions of Theorem \ref{thm: non-stationary error}, $|I_3|=O(k^{-\alpha})$.
\end{coro}

\textbf{Finishing the Proof of Theorem \ref{thm: non-stationary error}.} Then, Theorem \ref{thm: non-stationary error} follows immediately from Corollaries \ref{coro: convergence to stationarity} to \ref{coro: I3}. A complete proof of Theorem \ref{thm: non-stationary error}, including the proofs of Corollaries \ref{coro: convergence to stationarity} to \ref{coro: I3}, is given in Section \ref{subsect: non stationary proof} of e-companion. In particular, we provide an explicit expression of the constant $K$ in terms of the model parameters in \eqref{eq: K}.

\begin{remark}
	We advocate that Theorem \ref{thm: non-stationary error} may apply to other queueing models (its scope is beyond the $GI/GI/1$ queue), as long as one can verify three conditions for the designated model: (i) uniform boundedness for the rate of convergence to the steady state, i.e., Lemma \ref{lmm: thm1 1}, (ii) path-wise Liptschize continuity, i.e., Lemma \ref{lmm: Lipschitz}, and (iii) smoothness of the stationary distributions in the control variables, i.e., Lemma \ref{lmm: steady state continuity}. %So the above theoretic framework can be applied to other queueing systems, as long as the two conditions are verified,  to construct bounds for the regret of nonstationarity .
\end{remark}

\subsection{Regret of Suboptimality}\label{subsec: convergence rate}
To bound  the regret of suboptimality $R_2(L)$, we need to control the rate at which $x_k$ converges to $x^*$. This depends largely on the effectiveness of the estimator $H_k$ for $\nabla f(x_k)$. In our algorithm, such effectiveness is measured by the bias $B_k$ and variance $\mathcal{V}_k$.
The following result shows that, if $B_k$ and $\mathcal{V}_k$ can be appropriately bounded, then, $x_k$ will converge to $x^*$ rapidly and hence $R_2(L)$ can be properly bounded.
\begin{theorem}\label{thm: broadiezeevi}\textbf{$($Regret of suboptimality$)$}
	Suppose Assumptions \ref{assmpt: convexity} holds. If there exists a constant $K_3\geq 1$ such that the following conditions hold for all $k$,
	\begin{enumerate}
		\item[$(a)$] $\left(1+\frac{1}{k}\right)^\beta\leq 1 +\frac{K_0}{2}\eta_k$,
		\item[$(b)$] $B_k \leq  \frac{K_0}{8}k^{-\beta}$,
		\item[$(c)$]  $\eta_k\mathcal{V}_k \leq K_3k^{-\beta}$,
		%\item The recursion \eqref{eq: BZ recursion} holds.
	\end{enumerate}
	where $0<\beta\leq 1$ is a constant, and $\eta_k\to 0$ is the step size,
	then, there exists a constant $C\geq 8K_3/K_0$ with an explicit expression given in \eqref{eq: C}, such that for all $k\geq 1$,
	\begin{equation}\label{eq: convergence rate}
		\EE[\|x_k-x^*\|^2]\leq Ck^{-\beta},% \text{ for all }k.%\geq \left(\frac{4K_2}{\min(1,K_0)}\right)^{1/\beta}.
	\end{equation}
	and as a consequence, %	As a consequence, under Assumptions \ref{assmpt: uniform} to \ref{assmpt: convexity}, we have
	\begin{equation}\label{eq:R_2}
		R_2(L)\leq CK_1\sum_{k=1}^L\left( \frac{D_k}{\lambda(\bar{p})}+M\right)k^{-\beta}=O\left(\sum_{k=1}^L D_kk^{-\beta}\right).
	\end{equation}
\end{theorem}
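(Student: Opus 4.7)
The plan is to proceed in three stages: derive a one-step recursion for $e_k \equiv \EE[\|x_k - x^*\|^2]$, run an induction using conditions $(a)$--$(c)$ to control $e_k$, and then translate the iterate bound into the regret sum via smoothness and a Wald-type estimate of $\EE[T_k]$. For the recursion, since $x^* \in \mathcal{B}$ and $\mathcal{B}$ is convex, the projection is non-expansive so $\|x_{k+1} - x^*\|^2 \leq \|x_k - x^* - \eta_k H_k\|^2$. Expanding, taking conditional expectation given $x_k$, and writing $\EE[H_k \mid x_k] = \nabla f(x_k) + b_k$ with $\|b_k\| \leq B_k$, I would apply Assumption \ref{assmpt: convexity}$(a)$ to bound $-2\eta_k \nabla f(x_k)^{\top}(x_k - x^*) \leq -2K_0 \eta_k \|x_k - x^*\|^2$ and then combine Cauchy--Schwarz with AM--GM, $2\eta_k B_k \|x_k - x^*\| \leq K_0 \eta_k \|x_k - x^*\|^2 + \eta_k B_k^2 / K_0$, on the bias cross term. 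Taking outer expectation yields
\begin{equation*}
e_{k+1} \;\leq\; (1 - K_0 \eta_k)\, e_k \;+\; \eta_k B_k^2 / K_0 \;+\; \eta_k^2 \mathcal{V}_k.
\end{equation*}

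\textbf{Stage 2 (induction on $k$).} For $e_k \leq C(k-1)^{-\beta}$ the base case reduces to $e_1 \leq C$, which holds by the stipulated choice of $C$. For the inductive step, condition $(a)$ gives $(k-1)^{-\beta} = (k/(k-1))^\beta k^{-\beta} \leq (1 + K_0 \eta_k/2)\, k^{-\beta}$, and since $(1 - K_0 \eta_k)(1 + K_0 \eta_k/2) \leq 1 - K_0 \eta_k/2$, the leading term of the recursion is dominated by $(1 - K_0 \eta_k/2)\, C k^{-\beta}$. Conditions $(b)$ and $(c)$ then bound the residual by $\eta_k K_0 k^{-2\beta}/64 + \eta_k K_3 k^{-\beta}$, and the choice $C \geq 8 K_3 / K_0$ is designed exactly so that the slack $(K_0 \eta_k / 2)\, C k^{-\beta}$ dominates this residual (with $K_3 \geq 1$ absorbing the lower-order $k^{-\beta}/32$ contribution), closing the induction and establishing \eqref{eq: convergence rate}.

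\textbf{Stage 3 (regret sum).} Assumption \ref{assmpt: convexity}$(b)$ applied at $x = x^*$ gives $\nabla f(x^*) = 0$; integrating $\nabla f$ along the segment from $x^*$ to $x_k$ and using the same Lipschitz-type bound then yields $f(x_k) - f(x^*) \leq (K_1/2) \|x_k - x^*\|^2 \leq K_1 \|x_k - x^*\|^2$. For $\EE[T_k \mid x_k]$, counting arrivals over cycle $k$ shows (up to an integer boundary correction) that the number of cycle-$k$ arrivals equals $D_k - Q_k + Q_{k+1}$; a Wald-type identity for rate-$\lambda_k$ renewal arrivals combined with $\lambda_k \geq \lambda(\bar{p})$ (Assumption \ref{assmpt: uniform}$(a)$) and the uniform moment bounds $\EE[Q_k],\EE[Q_{k+1}] \leq M$ (Lemma \ref{lmm: uniform bound}) then yields $\EE[T_k \mid x_k] \leq D_k / \lambda(\bar{p}) + M$. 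Combining with the iterate bound, absorbing the shift from $(k-1)^{-\beta}$ to $k^{-\beta}$ into the constant, and summing over $k \leq L$ produces \eqref{eq:R_2}.

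The hard part will be the algebraic bookkeeping in Stage 2: the three conditions $(a)$--$(c)$ must interlock so that the multiplicative contraction $1 - K_0 \eta_k$ coming from strong convexity, after paying the cost imposed by condition $(a)$ to convert $(k-1)^{-\beta}$ into $k^{-\beta}$, still leaves enough negative room to absorb both the bias-squared residual bounded by $(b)$ and the step-size-variance residual bounded by $(c)$. A conceptually separate secondary difficulty is the estimate $\EE[T_k \mid x_k] \leq D_k / \lambda(\bar{p}) + M$, because the first $Q_k$ arrivals of cycle $k$ are actually generated at rate $\lambda_{k-1}$ rather than $\lambda_k$; here Lemma \ref{lmm: uniform bound} is precisely what is needed to absorb this nonstationarity into the additive $M$ term.
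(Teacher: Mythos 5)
Your proposal is correct and follows essentially the same route as the paper's proof: non-expansiveness of $\Pi_{\mathcal{B}}$, expansion of the square, Assumption \ref{assmpt: convexity}(a) on the gradient term, a one-step recursion for $\EE[\|x_k-x^*\|^2]$ closed by induction using conditions $(a)$--$(c)$ and $C\geq \max\{\|x_1-x^*\|^2,8K_3/K_0\}$, then the mean-value bound $|f(x_k)-f(x^*)|\leq K_1\|x_k-x^*\|^2$ and $\EE[T_k]\leq D_k/\lambda(\bar{p})+M$. The only (immaterial) differences are that you absorb the bias cross term via AM--GM into $\eta_k B_k^2/K_0$ where the paper uses $\|x_k-x^*\|\leq(1+\|x_k-x^*\|^2)/2$ to get the residual $\eta_k B_k$, and that you read $B_k$ literally as the bias against $\nabla f(x_k)$ per \eqref{eq: BV}, whereas the paper's proof measures it against $\tfrac12\nabla f(x_k)$ to match the randomized-coordinate estimator of Algorithm \ref{alg: direct}; both versions close the induction.
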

\begin{remark}[Selecting the ``optimal" $D_k$] The above expression \eqref{eq:R_2} indicates a trade-off in the selection of the parameter $D_k$. On the one hand, increasing the sample size $D_k$ reduces the bias $B_k$ for the gradient estimator, and hence leads to a smaller value of  $k^{-\beta}$. On the other hand, a larger $D_k$ makes the system operate under a sub-optimal decision for a longer time. To this end, one may choose an optimal order (in $k$) for $D_k$  by minimizing the order of the regret as in \eqref{eq:R_2}.
\end{remark}

Our proof of Theorem \ref{thm: broadiezeevi} follows an inductive approach as used in \cite{BoradieZeevi2011}. Let $b_k\equiv \EE[\|x_k-x^*\|^2] $. %for KW algorithm.
According to the SGD iteration  $x_{k+1}=\Pi_\mathcal{B}(x_k-\eta_kH_k)$, we have
$$\EE[\|x_{k+1}-x^*\|^2|x_k]\leq \EE[\|x_k-\eta_kH_k-x^*\|^2|x_k]=\|x_k-x^*\|^2- 2\eta_k \EE[H_k|x_k](x_k-x^*) +\eta_k^2\EE[\|H_k\|^2|x_k].$$
Then, by Assumption \ref{assmpt: convexity} and the definition of $B_k, \mathcal{V}_k$ by \eqref{eq: BV}, we derive the following recursive inequality for $b_k$:
\begin{equation*}
	\begin{aligned}
		%b_{k+1} & = E[E[\|x_k+1-x^*\|^2|x_{k}]]=E[E[\|x_k-\eta_kH_k-x^*\|^2|x_k]]=E[\|x_k-x^*\|^2] - 2\eta_k
		b_{k+1}\leq (1-K_0\eta_k+\eta_k B_k)b_k + \eta_k B_k +\eta^2_k \mathcal{V}_k, \quad k\geq 1,
	\end{aligned}
\end{equation*}
and we prove \eqref{eq: convergence rate} by induction. The full proof is given in Section \ref{subsect: R1 analysis} of the e-companion.

In Section \ref{sec: direct}, we apply Theorem \ref{thm: broadiezeevi} to treat our online learning algorithm (Algorithm \ref{alg: direct}) by verifying that Conditions (a)--(c) are satisfied.
Because in Theorem \ref{thm: broadiezeevi}, Conditions (a)--(c) are stated explicitly in terms of the step size $\eta_k$, bias $B_k$ and variance $\mathcal{V}_k$ of the gradient estimator, these conditions may serve as useful building blocks for the design and analysis of online learning algorithms in other queueing models as well.

%%%%%%%%%%%%%%%%%%%%%%%%%%%%%%%%%%%%%%%%%%%%%%%%%%%%%%
%%%%%%%%%%%%%%%%%%%%%%%%%%%%%%%%%%%%%%%%%%%%%%%%%%%%%%
\section{GOLiQ for the $GI/GI/1$ Queue}\label{sec: direct}
In this section, we provide a concrete GOLiQ algorithm that solves the optimal pricing and capacity sizing problem \eqref{eq: maximizing revenue} for a $GI/GI/1$ queueing system. %In particular, we assume that the functions $\lambda(p)$ and $c(\mu)$ are known. Then
We show that the gradient $\nabla f(\mu,p)$ can be estimated ``directly" from past experience (i.e., data of delay and busy times generated under the present policy).  Applying the regret analysis developed in Section \ref{sec: framework}, we provide a theoretic upper bound for the overall regret in Theorem \ref{thm: regret direct}.

\subsection{A Gradient Estimator}
Following the algorithm framework outlined in Section \ref{subsec: algorithm outline}, we now develop a detailed gradient estimator  $H_k$. Regarding %the cost function is
%$$f(\mu,p) = h_0 \lambda(p)\left(\EE[W_\infty(\mu,p)] +\frac{1}{\mu}\right)+c(\mu) - p\lambda(p)$$
the objective function in \eqref{eq:obj},
%As $\lambda(p)$ and $\mu(c)$ are known,
it suffices to construct estimators for the partial derivatives
\begin{align}\label{eq:partials}
	\frac{\partial}{\partial \mu}\EE[W_\infty(p,\mu)] \qquad \text{and} \qquad \frac{\partial}{\partial p}\EE[W_\infty(p,\mu)].
\end{align}
Following the \textit{infinitesimal perturbation analysis} (IPA) approach
(see, for example, \cite{Glasserman_1992}), we next show that the partial derivatives in \eqref{eq:partials} can be expressed in terms of the  steady-state distributions $W_\infty(p,\mu)$ and   $X_\infty(p,\mu)$  of the waiting time process $W_n$ and observed busy period process $X_n$, of which the  dynamics are characterized by \eqref{eq:RecDelay}--\eqref{eq:RecBusyPeriod}. %Later, in the online SGD algorithm, we will use finite-sample average of $X_n^k$ observed in cycle $k$ to approximate $\EE[X_\infty(\mu_k,p_k)]$.

\begin{lemma}\label{lmm: derivative process}
	%\blue{Suppose Assumptions 1 and 2 holds and in addition, $U_n$ has an infinite support. Then, for any $(\mu,p)\in\mathcal{B}$, $\EE[W_\infty(\mu,p)]$ are differentiable in $\mu$ and $p$. Besides,}
	Suppose Assumptions 1 and 2 holds. Then, for any $(\mu,p)\in\mathcal{B}$, $\EE[W_\infty(\mu,p)]$ are differentiable in $\mu$ and $p$. Besides,
	\begin{equation}\label{eq: derivative}
		\begin{aligned}
			\frac{\partial}{\partial p}f(\mu,p)&= -\lambda(p) -p\lambda'(p) + h_0\lambda'(p)\left(\EE[W_\infty(\mu,p)]+\EE[X_\infty(\mu,p)]+\frac{1}{\mu}\right)\\
			\frac{\partial}{\partial \mu}f(\mu,p)& = c'(\mu) - h_0\frac{\lambda(p)}{\mu}\left(\EE[W_\infty(\mu,p)] + \EE[X_\infty(\mu,p)]  + \frac{1}{\mu}\right)
		\end{aligned}
	\end{equation}
\end{lemma}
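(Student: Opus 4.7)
The plan is a two-step reduction. First I would perform elementary calculus on $f$ to peel off the deterministic terms, leaving only the gradient of $\EE[W_\infty(\mu,p)]$ to compute; then I would derive that gradient by sample-path \emph{infinitesimal perturbation analysis} (IPA) on Lindley's recursion.

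Expanding $f(\mu,p)=h_0\lambda(p)(\EE[W_\infty]+1/\mu)+c(\mu)-p\lambda(p)$ and matching it against the target expressions in \eqref{eq: derivative} shows that the lemma is equivalent to the two identities
\begin{equation*}
\frac{\partial \EE[W_\infty]}{\partial p}=\frac{\lambda'(p)}{\lambda(p)}\,\EE[X_\infty(\mu,p)],\qquad \frac{\partial \EE[W_\infty]}{\partial \mu}=-\frac{1}{\mu}\bigl(\EE[W_\infty]+\EE[X_\infty]\bigr),
\end{equation*}
which is what I would actually prove.

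For a fixed driving sequence $(U_n,V_n)$, Lindley's recursion $W_n=(W_{n-1}+V_n/\mu-U_n/\lambda(p))^+$ is piecewise smooth in $(\mu,p)$. Pathwise differentiation gives
\begin{align*}
\partial_\mu W_n &= \bigl(\partial_\mu W_{n-1}-V_n/\mu^2\bigr)\mathbf{1}\{W_n>0\},\\
\partial_p W_n &= \bigl(\partial_p W_{n-1}+U_n\lambda'(p)/\lambda(p)^2\bigr)\mathbf{1}\{W_n>0\},
\end{align*}
so both derivatives reset to zero at every busy-period start. If $m$ is the last regeneration index before $n$ (so $W_m=0$ and $W_j>0$ for $m<j\le n$), telescoping Lindley's recursion yields $W_n=\sum_{j=m+1}^n(V_j/\mu-U_j/\lambda(p))$, and the recursion \eqref{eq:RecBusyPeriod} similarly yields $X_n=\sum_{j=m+1}^n U_j/\lambda(p)$; hence $W_n+X_n=\sum_{j=m+1}^n V_j/\mu$. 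The same telescoping on the IPA recursions collapses to the pathwise identities
\begin{equation*}
\partial_\mu W_n=-\frac{W_n+X_n}{\mu},\qquad \partial_p W_n=\frac{\lambda'(p)}{\lambda(p)}\,X_n,
\end{equation*}
valid inside every busy period and trivially on its boundary where both sides vanish.

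It remains to take expectations, pass to stationarity, and interchange the derivative with $\EE[\cdot]$ and with $n\to\infty$. Here I would invoke the IPA unbiasedness framework of \cite{Glasserman_1992} (Theorem~3.2, already cited in the paper): the uniform moment and exponential-moment bounds of Lemma \ref{lmm: uniform bound}, combined with the light-tail Assumption \ref{assmpt: light tail} and the uniform-stability Assumption \ref{assmpt: uniform}(c), yield the uniform integrability of the pathwise difference quotients needed to apply that theorem uniformly over $\mathcal{B}$. Granting the interchange, the two pathwise identities above produce the required expressions for $\partial\EE[W_\infty]/\partial \mu$ and $\partial\EE[W_\infty]/\partial p$, and substituting them back recovers \eqref{eq: derivative}. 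I expect the only real obstacle to be this interchange step—handling the indicator $\mathbf{1}\{W_n>0\}$ on the measure-zero boundary and verifying Glasserman's hypotheses uniformly in $(\mu,p)\in\mathcal{B}$—while everything else is bookkeeping.
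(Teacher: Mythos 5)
Your proposal is correct and follows essentially the same route as the paper: both reduce the lemma to the two identities $\partial_p\EE[W_\infty]=(\lambda'(p)/\lambda(p))\EE[X_\infty]$ and $\partial_\mu\EE[W_\infty]=-(\EE[W_\infty]+\EE[X_\infty])/\mu$ and establish them by IPA on Lindley's recursion, identifying the derivative process with the observed busy time $X_n$ and deferring the derivative--expectation--limit interchange to Glasserman. The only (cosmetic) difference is that you telescope the IPA recursion over a busy period to obtain the pathwise identities $\partial_\mu W_n=-(W_n+X_n)/\mu$ and $\partial_p W_n=(\lambda'(p)/\lambda(p))X_n$ directly, whereas the paper changes variables to $r=1/\lambda(p)$ and $\hat W_n=\mu W_n$ and matches the resulting derivative recursion with the recursion for $X_n$ to conclude equality in distribution in the stationary limit.
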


\begin{proof}[Proof of Lemma \ref{lmm: derivative process}]
	To prove Equation \eqref{eq: derivative}, it suffices to work with the partial derivatives of the steady-state expectation $\EE[W_\infty(\mu,p)]$. We follow the IPA analysis in \cite{Glasserman_1992} and \cite{WSC2014}. %Unbiased IPA gradient of steady-state waiting time with respect to service rates has been derived in . Here we shall follow their results to derive the partial derivatives with respect to $\mu$ and $p$.
	%Following Lindley recursion, under control parameter $(p,\mu)$, we have
	%\begin{equation}\label{eq: lindley}
	%W_n(\mu,p) = \left(W_{n-1}(\mu,p)+\frac{1}{\mu}V_n-\frac{1}{\lambda(p)}U_n\right)^+,
	%\end{equation}
	%which can be expressed as a reflected random walk.
	
	Given $(\mu,p)$, we define $r(p)=1/\lambda(p)$ and rewrite the recursion \eqref{eq:RecDelay}  as
	$$W_n(\mu,p) = \left(W_{n-1}(\mu,p)+\frac{V_n}{\mu} - r(p)U_n\right)^+.$$
	Define the derivative process $Z_n \equiv \frac{\partial}{\partial r}W_n(\mu,p)$, then by chain rule, we have
	$$Z_n = \frac{\partial}{\partial r}W_n(\mu,p) = \frac{\partial}{\partial r}\left(W_{n-1}(\mu,p)+\frac{V_n}{\mu} - rU_n\right)^+ =
	\begin{cases}
		\frac{\partial}{\partial r}W_{n-1} - U_n = Z_{n-1} - U_n &\text{ if }W_n>0;\\
		0&\text{ if }W_n=0.
	\end{cases}$$
	and obtain a recursion $Z_n = (Z_{n-1} - U_n){\bf 1}_{\{W_n>0\}}$.
	Let $\tilde{Z}_n \equiv -Z_n/\lambda(p)$. Then, it is straightforward to see that $\tilde{Z}_n$ satisfies the recursion given in \eqref{eq:RecBusyPeriod} as the observed busy period $X_n$, i.e.,
	$$\tilde{Z}_n =
	\left(\tilde{Z}_{n-1} + \frac{U_n}{\lambda(p)}\right){\bf 1}(W_n>0).$$
	Under the assumption that the queueing system is stable, the limit $\tilde{Z}_\infty$ should be equal in distribution to $X_\infty$. Therefore, we formally derive
	\begin{equation}\label{eq: IPA}
		\frac{\partial}{\partial r}\EE[W_\infty(\mu,p)] = \EE[Z_\infty] =-\lambda(p) \EE[\tilde{Z}_\infty] = -\lambda(p)\EE[X_\infty(\mu,p)].
	\end{equation}
	The above heuristics can be made rigorous by verifying exchanges of limits using the results in \cite{Glasserman_1992}, and we refer the readers to Section \ref{subsec: glasserman} for detailed explanations.  %\blue{As $W_n$ is monotone in $r$, we can apply Corollary 5.3 in \cite{Glasserman_1992}. Here we briefly explain that conditions of Corollary 5.3 are satisfied and refer the readers to \cite{Glasserman_1992} for more details.
		%	\begin{itemize}
			%	    \item Condition (B1) is satisfied as the service and inter-arrival times are i.i.d. sequences.
			%	    \item Condition (B2) follows from Lemma \ref{lmm: thm1 1}, i.e. the waiting time agrees with the stationary sequence in finite time almost surely.
			%	    \item Condition (B3) holds by the fact that inter-arrival time has unbounded support.
			%	    \item Finally, following Lemma \ref{lmm: uniform bound}, the stationary mean waiting time and observed busy period are uniformly bounded.
			%	\end{itemize} }
	Using \eqref{eq: IPA}, we can derive the partial derivative of the steady-state waiting time with respect to price $p$ as below:
	\begin{align*}
		\frac{\partial}{\partial p}\EE[W_\infty(\mu,p)] =\frac{\partial}{\partial r}\EE[W_\infty(\mu,p)] \frac{\partial r(p)}{\partial p} = -\lambda(p)\EE[X_{\infty}(\mu,p)]\cdot - \frac{\lambda'(p)}{\lambda(p)^2} = \EE[X_{\infty}(\mu,p)]\frac{\lambda'(p)}{\lambda(p)}.
	\end{align*}
	Now we turn to $\frac{\partial}{\partial\mu}\EE[W_\infty(\mu,p)]$. Let $\hat{Z}_n \equiv \mu W_n(\mu,p)$, it is easy to check that $\hat{Z}_n = \left(\hat{Z}_{n-1} + V_n - \mu U_n/\lambda(p)\right)^+$. Then, following steps similar to those for \eqref{eq: IPA}, we have
	$$\frac{\partial}{\partial \mu}\EE[\hat{Z}_\infty(\mu,p)] =- \EE[X_{\infty}(\mu,p)].%= -\mu E[X_\infty(\lambda,\mu)].
	$$
	Therefore,
	\begin{equation*}
		\begin{aligned}
			- \EE[X_\infty(\mu,p)]=\frac{\partial}{\partial \mu}\EE[\hat{Z}_\infty(\mu,p)]=\frac{\partial}{\partial \mu}\EE[\mu W_\infty(\mu,p)] =\mu \frac{\partial}{\partial \mu}\EE[W_\infty(\mu,p)]  + \EE[W_\infty(\mu,p)],
		\end{aligned}
	\end{equation*}
	and hence,
	$
	\partial\EE[W_\infty(\mu,p)]/\partial \mu =-(\EE[X_\infty(\mu,p)] + \EE[W_\infty(\mu,p)])/\mu.$
	
	Finally, plugging the expressions of the two partial derivatives
	%$\frac{\partial}{\partial \mu}\EE[W_\infty(\mu,p)]$ and $\frac{\partial}{\partial p}\EE[W_\infty(\mu,p)] $
	into $\nabla f$ yields \eqref{eq: derivative}. 
\end{proof}

\subsection{GOLiQ: A $G/G/1$ Version}
Utilizing results in Lemma \ref{lmm: derivative process},  we are ready to design a $G/G/1$ version of the GOLiQ algorithm, where we estimate the terms $\EE[W_\infty(\mu,p)]$ and $\EE[X_\infty(\mu,p)])$ in the partial derivatives \eqref{eq: derivative} using the finite-sample averages of $W^k_n$ and $X_n^k$ observed in each cycle $k$. The formal description of the algorithm is given in Algorithm \ref{alg: direct}.\medskip

\begin{algorithm}[h]
	%\small
	\SetAlgoLined
	\KwIn{number of cycles $L$\;
		parameters $0<\xi<1$, $D_k$, $\eta_k$ for $k=1,2,...,L$\;
		%$D_k =\textcolor{red}{\log(16C_0K/K_0)}+\log(k) \textcolor{red}{\max(\lambda(\underline{p}),\bar{\mu})/\gamma_0}, \eta_k =\textcolor{red}{2k^{-1}/K_0}$ for $k=1,2,..,L$\;
		initial value $x_1=(\mu_1,p_1)$\;}
	\For{$n=1,2,...,D_k$}{
		operate the system under $x_k=(\mu_k,p_k)$ until $D_k$ customers enter service\;
		observe $(W^k_n,X^k_n)$ for $n=1,2,...,D_k$\;
		randomly draw $Z\in\{1,2\}$\;
		\eIf{$Z=1$}{
			$h \leftarrow-\lambda(p_k) -p_k\lambda'(p_k)+h_0\lambda'(p_k)\left[\frac{1}{\lceil D_k(1-\xi)\rceil}\sum_{n> \xi D_k}^{D_k} \left(X^k_{n} +W^k_{n}\right) +\frac{1}{\mu_k}\right]$\;
			$H_k\leftarrow (2h,0)$\;
		}{
			$h \leftarrow c'(\mu_k) - h_0\frac{\lambda(p)}{\mu_k}\left[\frac{1}{\lceil D_k(1-\xi)\rceil}\sum_{n> \xi D_k}^{D_k} \left(X^k_{n} +W^k_{n}\right)+\frac{1}{\mu_k}\right]$\;
			$H_k\leftarrow (0,2h)$\;
		}
		\textbf{update: } $x_{k+1} = \Pi_{\mathcal{B}}(x_k-\eta_k H_k)$\;
		%\red{$D_k(1-\xi)$ SHOULD BE ROUNDED TO INTEGER}
	}
	\caption{GOLiQ for $GI/GI/1$ Queues}
	\label{alg: direct}
\end{algorithm}
\vspace{-0.2in}
%\red{\small Note: The algorithm covers special cases where there is only decision variable. For example, if the service  we only optimize over $p$, then one just need set $Z=1$.}
%\begin{remark}[Special cases with one decision variable]
%Algorithm \ref{alg: direct} covers special cases where there is only decision variable. For example, if $\mu$ ($p$) is an exogenous parameter and the only decision is $p$ ($\mu$),  then one can simply fix $Z=1$ ($Z=2$) throughout the learning process. The order of the regret for these one-dimensional cases remains unchanged. 
%\end{remark}

%\vspace{1ex}
\begin{remark}[On the queueing leftover] 
	We elaborate more on our treatment of $Q_k$,  the existing queue content at the beginning of cycle $k$. First, the content of $Q_k$ includes customer arrivals in cycle $k-1$ and possibly even earlier cycles. Second, it is also possible to have $Q_k>D_k$.
	Nevertheless, these above cases do not affect the implementation of Algorithm \ref{alg: direct} (note that Algorithm \ref{alg: direct} gives a gradient estimator using $\lceil(1-\xi)D_k\rceil$ samples without specifying any of the above events). 
	Of course, the event $\{Q_k>D_k\}$ does play a role in our theoretic regret analysis, but it is a rare event with a negligible probability (in fact, we show that the probability will be suppressed to $O(k^{-3})$, also see  Remark \ref{rmk: left-overs}. 
\end{remark}

%\newpage
\paragraph{\textbf{Selecting the ``optimal" hyperparameters.}} 
The effectiveness of Algorithm \ref{alg: direct} largely hinges upon carefully selecting the three hyperparameters: (i) the warm-up time $\xi\in(0,1)$, (ii) the learning step size $\eta_k>0$, and (iii) the exploration sample size $D_k>0$. Except for $\xi$ which has no bearing on the theoretical order of the regret, both the other two parameters $D_k$ and $\eta_k$ will play critical roles in our regret analysis. We next give the forms of the two parameters.  
First, The step size $\eta_k$ satisfies
\begin{align}\label{par:eta}
	\eta_k=c_\eta/k,\qquad \text{with}\quad c_\eta\geq 2/K_0,
\end{align}
where $K_0$ is the convexity bound specified in Assumption \ref{assmpt: convexity}.
Next, the sample size $D_k$ satisfies
\begin{align}\label{par:D}
	D_k=a_D+b_D\log(k),\quad \text{with}\quad 
	a_D\geq \frac{C_D}{\min(\gamma,\eta)\xi}\quad\text{and}\quad b_D\geq \frac{8}{\min(\gamma,\eta)\xi},
\end{align}
for any warm-up parameter $\xi\in(0,1)$, where $\gamma$ and $\eta$ are the constants specified in Assumption \ref{assmpt: light tail}, and the explicit formula of $C_D$ is given in \eqref{eq: Dk direct}.

The above-mentioned forms of $\eta_k$ and $D_k$ are obtained from our detailed regret analysis  where we show that the structure of \eqref{par:eta} and \eqref{par:D} ``minimizes" the order of the overall regret {(in the sense of maximizing $\alpha$ and $\beta$ as in Theorems \ref{thm: non-stationary error} and \ref{thm: broadiezeevi})}. 
Although the theoretical bounds of parameters $a_D$, $b_D$ and $c_\eta$ are imposed to facilitate our regret analysis, our numerical experiments show that GOLiQ remains effective even when the theoretical bounds are relaxed, {confirming the robustness of GOLiQ to these hyperparameters}; see  Section \ref{sec: para_robust} for details. 
Next, we show that Algorithm \ref{alg: direct} has a regret bound of $O((\log(M_L)^2)$ with $M_L\equiv\sum_{k=1}^L D_k$ being the cumulative number of customers served by cycle $L$. We do so by verifying that our choices of $D_k$ and $\eta_k$ (along with the corresponding $B_k$ and $\mathcal{V}_k$), will satisfy the conditions in Theorem \ref{thm: non-stationary error} and Theorem \ref{thm: broadiezeevi}.

\begin{theorem}\label{thm: regret direct}
	\textbf{$($Regret Bound for Algorithm \ref{alg: direct}$)$}\\
	Suppose Assumptions \ref{assmpt: uniform} to \ref{assmpt: convexity} hold, and $\eta_k$ and $D_k$ are selected according to \eqref{par:eta} and \eqref{par:D}.
	%If $D_k =O(\log(k))$ and \red{$\eta_k = ck^{-1}$ with $c\geq 2/K_0$} for $k=1,2,...,L$, then
	Then
	\begin{enumerate}
		%As a consequence, there exists a positive constant $C$ such that
		%\begin{equation}\label{eq: convergence %direct}
		%E[\|x_k-x^*\|^2]\leq Ck^{-1}\text{ for %all }k\geq 1.
		%\end{equation}
		\item[$(i)$] There exists a positive constant $K_3>0$ such that
		\begin{align*}
			B_k\leq \frac{K_0}{8k} \quad \text{and}\quad \eta_k\mathcal{V}_k\leq \frac{K_3}{k}.
		\end{align*}
		\item[$(ii)$] There exists a positive constant $K_2>0$ such that
		\begin{equation}\label{eq: switch rate}
			\EE[\|x_k-x_{k+1}\|^{2}]\leq K_2k^{-2}.
		\end{equation}
		\item[$(iii)$] As a consequence of $(i)$ and $(ii)$, the regret for Algorithm \ref{alg: direct}
		\begin{align}\label{eq:constantK}
			R(L) \leq K_{\text{alg}}\log(M_L)^{2}=O(\log(M_L)^2).
		\end{align}
	\end{enumerate}
\end{theorem}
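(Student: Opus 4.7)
The plan is to establish parts $(i)$ and $(ii)$ directly from the structure of the gradient estimator, and then to derive $(iii)$ by applying Theorems \ref{thm: non-stationary error} and \ref{thm: broadiezeevi} with $\alpha=\beta=1$. I will select the constant hidden in $D_k=O(\log k)$ so as to simultaneously satisfy the cycle-length condition $D_k\geq 2\log(k)/\gamma$ of Theorem \ref{thm: non-stationary error} and the bias condition of Theorem \ref{thm: broadiezeevi}.

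\textbf{Step 1 (Bias bound).} The gradient estimator $H_k$ differs from $\nabla f(\mu_k,p_k)$, after conditioning on $(\mu_k,p_k)$, only through replacing $\EE[W_\infty(\mu_k,p_k)]+\EE[X_\infty(\mu_k,p_k)]$ with the burn-in-trimmed sample average $\frac{1}{D_k(1-\xi)}\sum_{n>\xi D_k}^{D_k}(W_n^k+X_n^k)$ (randomized over the two coordinates via $Z$). Because $\lambda(p)$, $\lambda'(p)$, $p$ and $\mu$ are bounded on $\mathcal{B}$, it suffices to bound the transient bias
\[
\Big|\EE[W_n^k+X_n^k\mid x_k]-\EE[W_\infty(\mu_k,p_k)]-\EE[X_\infty(\mu_k,p_k)]\Big|,\qquad n>\xi D_k.
\]
I would couple $(W_n^k,X_n^k)$ synchronously with a stationary pair $(\bar W_n^k,\bar X_n^k)$ started from $W_\infty(\mu_k,p_k)$; an exponential contraction in the spirit of Lemma \ref{lmm: thm1 1} then gives a bound of the form $e^{-\gamma n}\cdot C$, where $C$ is uniform by Lemma \ref{lmm: uniform bound}. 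Averaging over $n>\xi D_k$ yields a bias of order $e^{-\gamma\xi D_k}$. Choosing the constant in $D_k=O(\log k)$ large enough so that $\gamma\xi D_k\geq \log k+\log(8/K_0)+\log C$ gives $B_k\leq K_0/(8k)$.

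\textbf{Step 2 (Variance and switching rate).} For the second moment, I invoke the uniform moment bounds of Lemma \ref{lmm: uniform bound}: $\EE[(W_n^k)^2\mid x_k]$ and $\EE[(X_n^k)^2\mid x_k]$ are bounded by $M$ uniformly in $n,k$ and $x_k\in\mathcal{B}$, so $\mathcal{V}_k\leq V^\star$ for some finite constant $V^\star$. With $\eta_k=2/(K_0 k)$ this gives $\eta_k\mathcal{V}_k\leq K_3/k$ for $K_3=2V^\star/K_0$, establishing $(i)$. For $(ii)$, by nonexpansiveness of $\Pi_{\mathcal{B}}$,
\[
\EE\!\left[\|x_{k+1}-x_k\|^2\right]\leq \eta_k^2\,\EE[\|H_k\|^2]\leq \eta_k^2 \mathcal{V}_k\leq \eta_k\cdot\frac{K_3}{k}=\frac{2K_3}{K_0}k^{-2},
\]
yielding $(ii)$ with $K_4=2K_3/K_0$.

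\textbf{Step 3 (Assembling the regret bound).} With $\alpha=1$, Condition (a) of Theorem \ref{thm: non-stationary error} holds by the choice of $D_k$, and Condition (b) is $(ii)$; hence $R_1(L)\leq K\sum_{k=1}^L k^{-1}=O(\log L)$. With $\beta=1$, Condition (a) of Theorem \ref{thm: broadiezeevi} reads $1+1/(k-1)\leq 1+1/(k-1)$, which holds with equality for our $\eta_k$; Conditions (b) and (c) are part $(i)$. Theorem \ref{thm: broadiezeevi} therefore gives
\[
R_2(L)\leq CK_1\sum_{k=1}^L\Big(\frac{D_k}{\lambda(\bar p)}+M\Big)k^{-1}=O\!\left(\sum_{k=1}^L \frac{\log k}{k}\right)=O((\log L)^2).
\]
Since $M_L=\sum_{k=1}^L D_k=\Theta(L\log L)$, we have $\log L=\Theta(\log M_L)$, so combining the two pieces gives $R(L)=O((\log M_L)^2)$ as claimed, with $K_{\text{alg}}$ obtained by tracking the explicit constants $K$, $C$, $K_1$, $K_3$, $M$ and $\lambda(\bar p)$ through Steps 1--2.

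\textbf{Main obstacle.} The delicate point is Step 1: I must show that the burn-in average already exhibits $O(1/k)$ bias even though $D_k$ is only logarithmic. This hinges on the exponential-in-$n$ contraction of Lemma \ref{lmm: thm1 1} being uniform over $(\mu,p)\in\mathcal{B}$ (so that the constant $\gamma$ does not depend on $k$), and on the uniform light-tail moment bounds of Lemma \ref{lmm: uniform bound} controlling the initial gap at the start of each cycle. The calibration of the constant in $D_k=O(\log k)$ must simultaneously honor the contraction rate $\gamma$ (for Theorem \ref{thm: non-stationary error}) and the $K_0/8$ bias threshold (for Theorem \ref{thm: broadiezeevi}); a single sufficiently large multiplicative constant, depending only on $\gamma$, $\xi$, $K_0$ and the bound $M$, will work.
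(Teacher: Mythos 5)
Your overall architecture coincides with the paper's: bound $B_k$ by the transient bias of the trimmed time average so that $D_k=O(\log k)$ yields $B_k=O(1/k)$, bound $\mathcal{V}_k$ uniformly via Lemma \ref{lmm: uniform bound}, obtain $(ii)$ from nonexpansiveness of the projection, and then invoke Theorems \ref{thm: non-stationary error} and \ref{thm: broadiezeevi} with $\alpha=\beta=1$ together with $\sum_{k\le L}D_k/k=O((\log L)^2)$ and $\log M_L=\Theta(\log L)$. Steps 2 and 3 are essentially the paper's argument (your $K_4=2K_3/K_0$ is in fact what the paper's own computation produces).

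The gap is in Step 1, in two places. First, you apply the ``exponential contraction in the spirit of Lemma \ref{lmm: thm1 1}'' to the pair $(W_n^k,X_n^k)$, but that lemma only controls the waiting time: its proof rests on the fact that $|W_n^1-W_n^2|$ never exceeds the initial gap before the coupling time $\max(\Gamma_1,\Gamma_2)$. The observed busy period has no such property --- before coupling, $|X_n^1-X_n^2|$ can be as large as $X_0^1+X_0^2+\sum_{j\le n}\tau_j$, which grows linearly in $n$. The paper needs a separate result (Lemma \ref{lmm: convergence of busy period}), obtained by Cauchy--Schwarz against $\PP(\max(\Gamma_1,\Gamma_2)>n)^{1/2}$, which yields only $K_4e^{-0.5\gamma n}n^4$; the polynomial factor is then absorbed because $D_k^4e^{-c\gamma\xi D_k}$ stays bounded. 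Your conclusion survives, but the step as written is unjustified. Second, you couple $(W_n^k,X_n^k)$ directly with a stationary chain at $(\mu_k,p_k)$ ``started from $W_\infty(\mu_k,p_k)$,'' ignoring that the first $Q_k$ customers of cycle $k$ arrive at rate $\lambda_{k-1}$, so the synchronous coupling with the $(\mu_k,p_k)$-stationary sequence can only begin at the random index $Q_k$. The paper handles this by splitting on the events $\{Q_k<0.5\xi D_k\}$, $\{0.5\xi D_k\le Q_k\le \xi D_k\}$ and $\{Q_k>\xi D_k\}$ and using the exponential tail of $Q_k$ from Lemma \ref{lmm: uniform bound}; without some such argument the claim that every $n>\xi D_k$ has had enough time to contract toward $W_\infty(\mu_k,p_k)$ does not follow. (A minor further slip: with $\eta_k=2/(K_0k)$ one has $\tfrac{K_0}{2}\eta_k=1/k$, not $1/(k-1)$, so Condition (a) of Theorem \ref{thm: broadiezeevi} does not ``hold with equality''; this indexing wrinkle is present in the paper as well and is immaterial, but your verification of it is not correct as stated.)
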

\begin{remark}[On the logarithmic regret bound \eqref{eq:constantK}]
	Below we provide some additional discussions on the regret bound \eqref{eq:constantK}:
	\begin{enumerate}
		\item[$(i)$] {\bf On the constant $K_{\text{alg}}$.}
		The explicit expression  for the constant $K_{\text{alg}}$, although complicated, is given by \eqref{eq: K_alg}. It involves error bound corresponding to the transient behavior of the queueing system, the bias and variance of the gradient estimator, moment bounds on the queue length and other model parameters. One can verify that $K_{\text{alg}}$ is decreasing in the convergence rate coefficient $\gamma$ and increasing in the moment bounds of the queue length $M$. %Note that $K_{alg}$ is an upper bound and not necessarily tight, but this still indicates that the performance of Algorithm \ref{alg: direct} depends on the variability of waiting time distribution and traffic intensity of the queue.
		%Explicit formulas for parameters $D_k$  and $K_3$ are given in \eqref{eq: Dk direct}and  \eqref{eq: K_3}.
		
		\item[$(ii)$] {\bf On the first logarithmic term.	}
		Consider an SGD algorithm in that an \textit{unbiased} gradient estimator $H_k$ with a bounded variance can be evaluated using a \textit{single} data point (i.e., $B_k=0$, $\mathcal{V}_k=O(1)$), it has been proved the scaled error $k^{-1/2}(x_k-x^*)$ converges in distribution to a non-zero random variable (Theorem 2.1 in Chapter 10 of \cite{KushnerYin2003}). Hence, the  convergence rate for $\|x_k-x^*\|^2$ that any SGD-based algorithm can achieve is at best $O(k^{-1})$ (yielding a cumulative regret of order $O(\log(k))$), which is exactly the rate of convergence established by our online algorithm (taking $\beta=1$ in Theorem \ref{thm: regret direct}).
		In this sense, GOLiQ is already achieving an {``optimal" convergence rate.} We point out that, due to the nonstationary error of the queueing system, our gradient estimator is obtained using an increasing number of data points in order to guarantee a reasonably small bias. %unbiased estimator of the steady-state gradient using a constant number of data points in each cycle.
		\item[$(iii)$] {\bf On the second logarithmic term.}
		In order to control the regret of nonstationarity, the queueing system need to be operated in each cycle for a duration of order $O(\log(k))$. Because the queueing performance converges to its steady state \textit{exponentially} fast, this inevitably introduces an extra logarithmic term in our regret bound (which explains the ``square" in $\log(M_L)^2$).
		The question that remains open is whether this $O(\log(M_L)^2)$ bound is optimal. We conjecture that the answer is yes but admit that a rigorous treatment of a lower regret bound can be quite challenging. For example, establishing a lower regret bound requires a lower bound on the convergence rate of a $GI/GI/1$ queue, which by itself is an open question.  We leave this question to future research. %In our current analysis, to have the regret in polynomial order of $log(M_L)$, we must have $\|x_k-x^*\|^2=\tilde{Q}(1/k)$ and hence $B_k=\tilde{O}(1/k)$ which leads to $D_k=\tilde{O}(\log(k))$ if we assume the exponential convergence to stationary distribution of the queueing process (Lemma \ref{lmm: thm1 1}) is tight . A rigorous proof will involve analyzing the lower bound of convergence rate of $GI/GI/1$ queue to stationarity, which is beyond the scope of the current paper.
	\end{enumerate}
	
	% In particular, to achieve a transient bias at the order $O(1/k)$, we must have an extra $\log(k)$ term in the total regret bound. This is due to the intrinsic nature of  queueing systems (and this may exists as well in other models experiencing nonstationary transitions).
	%	
\end{remark}
\begin{remark}[Controlling the length of cycle $k$]
	%	In order to control our algorithm and measure the regret bound, w
	We use $D_k$ (the number of customers served in cycle $k$), instead of the clock time $T_k$, to control and measure the regret bound. The benefit of using $D_k$ (rather $T_k$) as the cycle length is that it facilitates the technical analysis, because $D_k$ is directly related to the number of samples used to estimate our gradient estimator. %, because the observation cycles are also defined by the number of customers in the algorithm, and more importantly, discrete-event representation of the queueing system, i.e. Lindley recursion, is more convenient.
	%In fact, this is somewhat equivalent to measuring cycles with $T_k$
	In fact, using $D_k$ instead of $T_k$ has no bearing on the order of the regret bound. To see this, note that the arrival rate is assumed to fall into a compact set $[\lambda(\bar{p}),\lambda(\underline{p})]$. Therefore, since $T_L$ is the total units of clock time elapsed after cycle $L$, we have $M_L/\lambda(\underline{p})\leq \EE[T_L]\leq M_L/\lambda(\bar{p})$ for all $L$.
\end{remark}

%%%%%%%%%%%%%%%%%%%%%%%%%%%%%%%%%%%%%%%%%%
%%%%%%%%%%%%%%%%%%%%%%%%%%%%%%%%%%%%%%%%%%
\section{Numerical Experiments}\label{sec: num}
%\subsection{An $M/M/1$ base model}\label{sec:NumBase}
To confirm the practical effectiveness of our online learning method, we conduct numerical experiments to visualize the algorithm convergence, benchmark the outcomes with known exact optimal solutions, estimate the true regret and compare it to the theoretical upper bounds.
Our base example is an $M/M/1$ queue, having Poisson arrivals with rate $\lambda(p)$, and exponential service times with rate $\mu$.
In our optimization, we consider
a commonly used logistic demand function \citep{BZ15}
\begin{align}\label{logisticLmd}
	\lambda(p)=n\lambda_0(p),\qquad  \lambda_0(p) = \frac{\exp(a-p)}{1+\exp(a-p)},
\end{align}
where $n$ is the system scale (also referred to as the market size). We also consider the following convex cost function for the service rate
\begin{align}\label{convexMu}
	c(\mu)  = c_0\mu^2.
\end{align}
See the top left panel of Figure \ref{fig:FixedMu} for $\lambda(p)$ in \eqref{logisticLmd}. In particular, the optimal pricing and staffing problem in \eqref{eq: maximizing revenue} now becomes
\begin{equation}\label{eq:ObjectiveMM1}
	\max_{\mu,p} \ \left\{ p\lambda(p) - c_0 \mu^2 -h_0 \frac{\lambda(p)/\mu}{1-\lambda(p)/\mu}\right\}.
\end{equation}

In light of the closed-form steady-state formulas of the $M/M/1$ queue, we can obtain the exact values of the optimal solutions $(\mu^*,p^*)$ and the corresponding objective value $f(\mu^*,p^*)$, with which we are able to benchmark the solutions from our online optimization algorithm. %Specifically, the regret in this case is computable because $\rho_k$ and $f(\mu^*, p^*)$ in \eqref{eq:regretk} are both available.

We first consider two one-dimensional online optimization problems in Section \ref{sec:OneDim}. We next treat the two-dimensional \textit{pricing and staffing problem} in Section \ref{sec:TwoDim}. In Section \ref{sec:AsymOpt}, we compare our results to previously established asymptotic heavy-traffic solutions in \cite{LeeWard2014}.
Additional numerical experiments are provided in the e-companion:
In Section \ref{sec: para_robust}  we investigate the robustness of GOLiQ to the hyperparameters. In Section \ref{sec: cmp_Tim}  we benchmark the performance of GOLiQ to other online learning methods. Section \ref{sec: AddNum} includes more experiments regarding the relaxation of uniform stability and GOLiQ's performance  in queues having other inter-arrival and service time distributions.

\subsection{One-Dimensional Online Optimizations}\label{sec:OneDim}

Algorithm \ref{alg: direct} covers special cases where there is only one decision variable. For example, if the service capacity $\mu$ (service fee $p$) is an exogenous parameter and the only decision is the service fee $p$ (service capacity $\mu$),  then one can simply fix $Z=1$ ($Z=2$) throughout the learning process. The theoretical regret bound (as in Theorem \ref{thm: regret direct}) for these one-dimensional cases remains unchanged. 

\subsubsection{Online optimal pricing with a fixed service capacity}
Motivated by revenue management problems in revenue generating service system, our first example focuses on the one-dimensional optimization of price $p$ with service rate $\mu = \mu_0$ held fixed. In this case we can simply omit the term $c_0 \mu^2$ in \eqref{eq:ObjectiveMM1}. Fixing the other model parameters as $a = 4.1, n = 10$, $h_0 = 1$ and $\mu_0=10$, we first obtain the exact optimal price $p^*=3.531$ (top right panel of Figure \ref{fig:FixedMu}). 
According to Algorithm \ref{alg: direct} and Theorem \ref{thm: regret direct}, we set the step size $\eta_k=1/k$ and cycle length $D_k=10+10\log(k)$.
\begin{figure}[H]
	\vspace{-0.05in}
	\centering
	\includegraphics[width=0.91\textwidth]{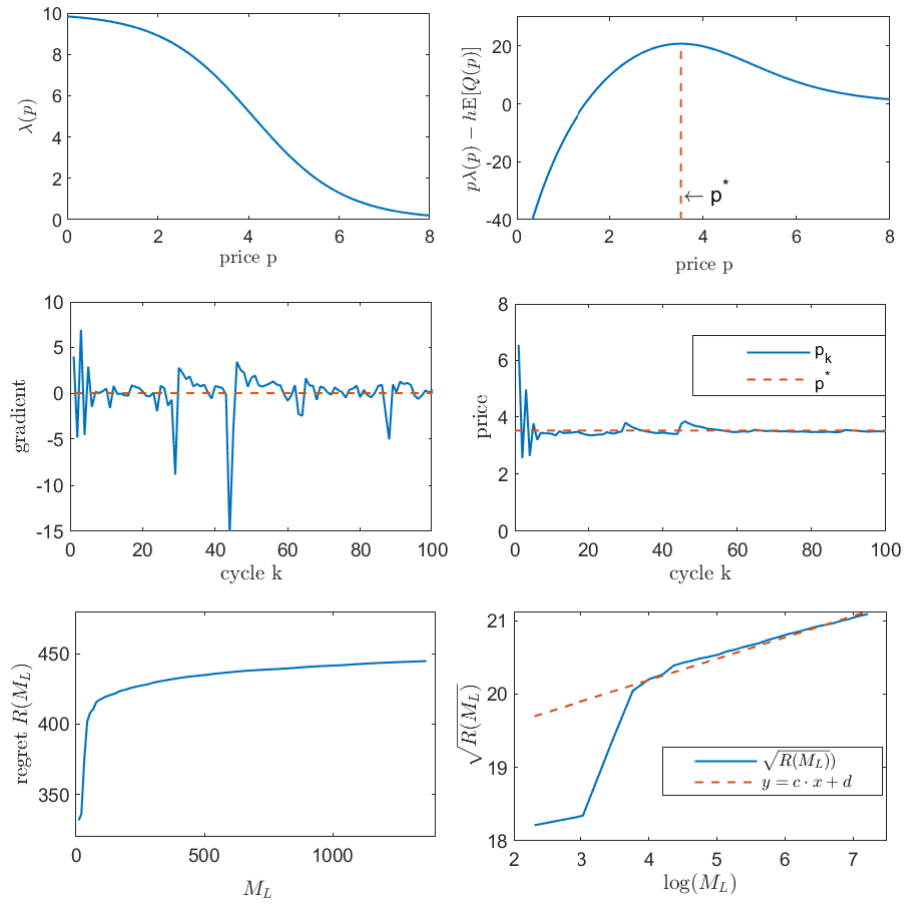}
	\vspace{-0.1in}
	\caption{Online optimal pricing for an $M/M/1$ queue with fixed service rate, with $\mu_0=10,a=4.1,p_0=6.5,p^*=3.531$, $\eta_k=1/k$ and $D_k=10+10\log(k)$: (i) demand function (top left); (ii) revenue function (top right); (iii) sample path of the gradient (middle left); (iv) sample path of the price (middle right); (v) estimated regret (bottom left); (vi) square root of regret versus logarithmic of served customers, with $c=0.24$, $d=19.04$ (bottom right). }\label{fig:FixedMu}
	\vspace{-0.15in}	
\end{figure}
%\par In Figure 1, (i) is the logistic demand function we used and it's decreasing but not completely concave. (ii) is the objective function for our algorithm and since $\mu=\mu_0$ so we ignore the $c(\mu)$ term. And though this does not completely convex, it has a unique optimal point and our algorithm performs well for this situation. (iii) is the gradient trajectory plot. One thing to note is that, when the price converges, the gradient is not symmetric with respect to x-axis. Though it's larger than 0 most of the time, there will be great spikes with low probability and the overall expectation is 0. (iv) is the trajectory plot for price. We would see it converges rapidly to the optimal though the initial point is far. (v) and (vi) are plots about regret. As the analysis before, $R(M_L)=O(\log(M_L)^2)$, hence we draw $log(M_L)$ versus $\sqrt{R(M_L)}$ plot and we also draw linear regression line. The result shows that they are consistent, which implies our analysis is reasonable.
In Figure \ref{fig:FixedMu}, we give the sample paths of the gradient $H_k$ and price $p_k$ as functions of the number of cycles $k$, and the mean regret (estimated by averaging 500 independent sample paths) as a function of the cumulative number of service completions $M_L$. We observe that {although the objective function $f(\mu,p)$ is not convex in $p$}, the pricing decision $p_k$ quickly converges to the optimal value $p^*$, and the regret grows as a logarithmic function of $M_L$. In particular, a simple linear regression for the pair $\left(\sqrt{R(M_L)}, \log(M_L)\right)$ (bottom right panel) verifies our regret bound given in Theorem \ref{thm: regret direct}.

\subsubsection{Online optimal staffing problem with an exogenous arrival rate}
%In application to service system based on good wills, such as hospitals, we do not control the price $p$ but only control the service capacity to reach balance between mortality and cost. In this case, we fixed $p$ and optimize service capacity $\mu$. Here we also use the same demand function $\lambda(p)$ as before but we control $p=p_0=3.531$. We also use the same M/M/1 queue model for the same reason and the optimal service capacity is $\mu^*=8.342$. And $h_0=1,c(\mu)=0.2\times\frac{1}{2}\mu^2$. We let $\eta_k=0.4\frac{1}{k}$ with $D_k=10+10\log(k)$.
Motivated by conventional service systems where customers are served based on good wills (e.g., hospitals), we next solve an online optimal staffing problem, with the objective of minimizing the combination of the steady-state queue length (or equivalently the delay) and the staffing cost, with the arrival rate (or equivalently, the price $p$) held fixed. Namely, we omit the term $p\lambda(p)$ in \eqref{eq:ObjectiveMM1}. Fixing $\lambda = \lambda_0 = 6.385$, $h_0=1$, and $c_0=0.1$, we obtain the exact optimal service capacity $\mu^*=8.342$ (top right panel of Figure \ref{fig:fixedP}). Also by Algorithm \ref{alg: direct} and Theorem \ref{thm: regret direct}, we set the step size $\eta_k=0.4k^{-1}$ and cycle length $D_k=10+10\log(k)$ with initial service rate $\mu_0=10$.
\begin{figure}[H]
	\includegraphics[width=.93\textwidth]{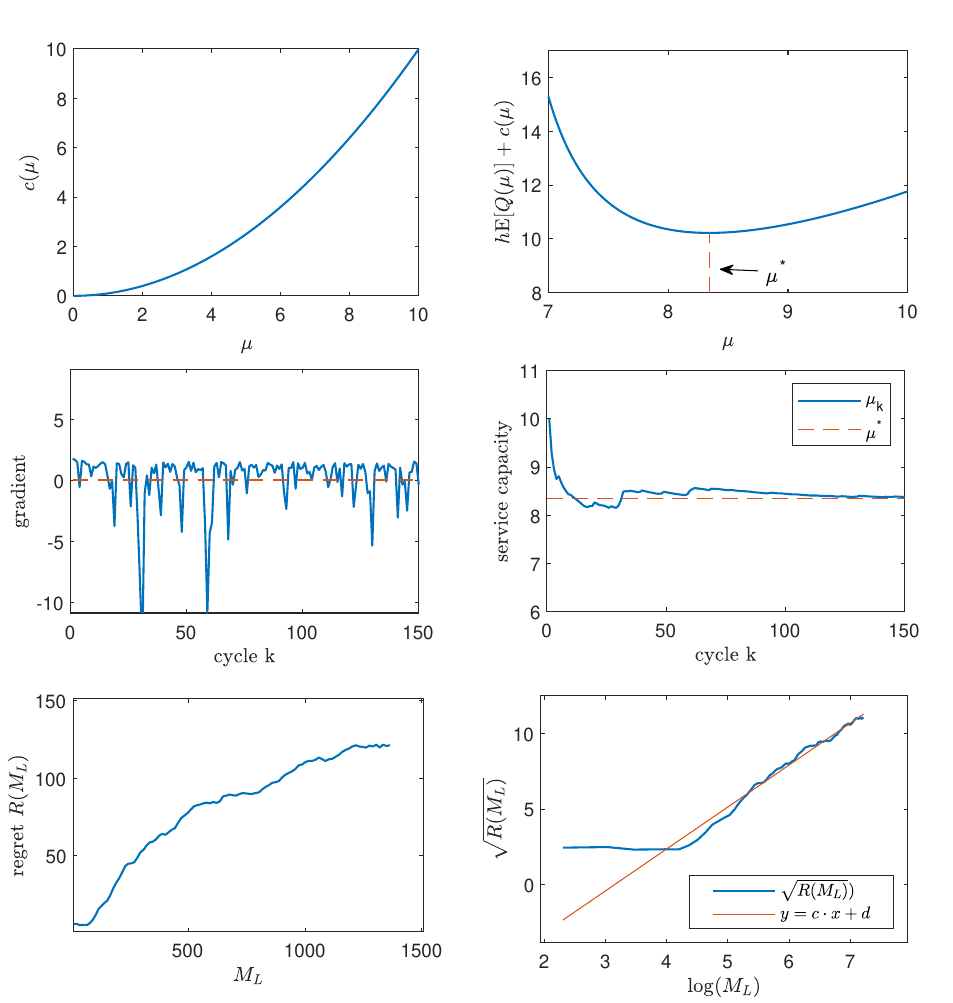}
	\caption{Online optimal staffing for an M/M/1 queue with fixed price with $\lambda_0 =6.385, M =10, \eta_k =0.4k^{-1}$ and $D_k =10+10\log(k)$: (i) staffing cost (top left); (ii) cost function (top right); (iii) sample path of gradient (middle left); (iv) sample path of service capacity (middle right); (v) estimated regret (bottom left); (vi) square root of regret versus logarithmic of served customers, with $c = 2.76$, $d =-8.68$ (bottom right).}
	\label{fig:fixedP}
\end{figure}
%\par
%In the Figure 2, (i) is the loss function with respect to $\mu$ and it's convex .(ii) is the gradient plot (single path) and the spike's reason is the same as before. (iii) is the trajectory plot (single path) and we could see it converges rapidly. Some may be curious about fluctuation around 50-th cycle and we explain that this is because the negative spike in gradient and since it has a low probability, it will not influence convergence results. (iv) is the square root of regret versus log(served customers) plot, which is similar with before and this is also consistent with our analysis.
In Figure \ref{fig:fixedP}, we again give sample paths of the gradient $H_k$ and service capacity $\mu_k$, and estimation of the regret.
As the number of cycles $k$ increases, our stage-$k$ staffing decision $\mu_k$ quickly converges to $\mu^*$ (bottom right panel) and the regret also grows as a logarithmic function of $M_L$ (bottom left panel). %By the same method before (bottom right panel), our regret bound in Theorem \ref{thm: regret direct} is verified.

\subsection{Joint Pricing and Staffing Problem}\label{sec:TwoDim}

%In this sub section, we first give the M/M/1 base model. Then we use experiments to show the uniform stability assumption could be loosed to some degree by setting initial point at a in-convergent state and shows it converges. And the third part we use some M/G/1 cases to show the our algorithm could be used in G/G/1 system and it's stable. Moreover, since G/G/1 system does not have a closed form, so our experiments implies the convex assumption could also be loosed to some degree.

% Here we also use the same logistic demand function $\lambda(p)=\mu_0\frac{\exp(a-p)}{1-\exp(a-p)}$ where $\mu_0=10,a=4.1$. And we select $h_0=1$ and $c(\mu)=0.2\times\frac{1}{2}\mu^2$ for the same reason above. This leads to optimal point $p^*=4.02$ and $\mu^*=7.10$. The step length is $\eta_k=\frac{1}{k}$ and $D_k=10+10\log(k)$. We choose initial point $p_0=7.5$ and $\mu_0=12$ which is not near to the optimal and we run 500 independent cycle and take the average to observe the regret.
We next consider a joint staffing and pricing problem having the objective function in \eqref{eq:ObjectiveMM1}, with the logistic demand function in \eqref{logisticLmd} and parameters $a=4.1$, $n=10$, $h_0=1$ and $c_0=0.1$. The optimal price $p^*=4.02$ and service rate $\mu^*=7.10$ are given as benchmarks (top right panel in Figure \ref{fig2dMM1}). %Also by Algorithm \ref{alg: direct} and Theorem \ref{thm: regret direct} we set step length $\eta_k=1/k$ and cycle length $D_k=10+10\log(k)$ with initial parameter $p_0=7.5$ and $\mu_0=12$.
\begin{figure}[H]
	\includegraphics[width=1.01\textwidth]{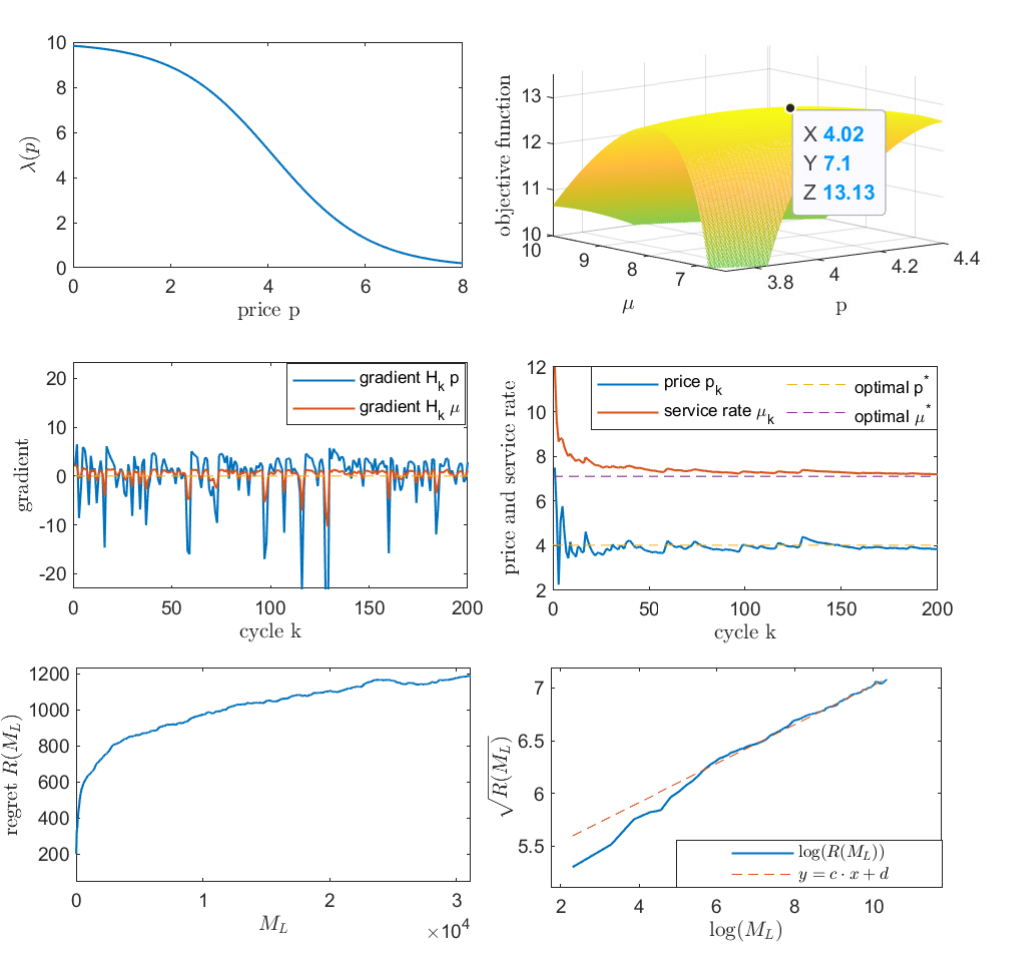}%fig2djointMM1new.eps}
\caption{Joint pricing and staffing for an $M/M/1$ queue with $p_0=7.5, \mu_0=12, \eta_k = 1/k$ and $D_k = 10+10\log(k)$: (i) demand function (top left); (ii) revenue function (top right); (iii) sample path of gradient (middle left); (iv) sample path of decision parameters (middle right); (v) estimated regret (bottom left); (vi) square root of regret versus logarithmic of served customers, with $c=0.186$, $d=5.17$ (bottom right).}
\label{fig2dMM1}
\end{figure}
In Figure \ref{fig2dMM1}, we show that $\mu_k$ and $p_k$ converge quickly to their corresponding optimal target levels $\mu^*$ and $p^*$ {(although the objective $f(\mu,p)$ is not always convex when $\mu>\lambda(p)$)}. And similar to the one-dimensional cases, the regret grows as a logarithmic function of $M_L$ (bottom left panel). 
%We next use this example to test the performance of the online learning algorithm without the requirement of uniform stability (i.e., when Condition (c) of Assumption \ref{assmpt: uniform} is violated). Specifically, we pick the initial pricing and staffing policies $(p_0,\mu_0)$ such that the beginning traffic intensity is $\rho_0 = \lambda(p_0)/\mu_0>1$. Our simulation results show that the algorithm remains effective with the $GI/GI/1$ queue quickly becoming stable. See Section \ref{sec:UnifStab} for details.

\subsection{Comparison to Heavy-Traffic Methods}\label{sec:AsymOpt}
%\red{More explanation on the requirement of heavy-traffic methods.} 
In this subsection, we provide numerical analysis to contrast the performance of GOLiQ to that of the heavy-traffic approach in \cite{LeeWard2014}. 
In \cite{LeeWard2014}, the objective is to find the optimal decisions $p^*$ and $\mu^*$ for the $GI/GI/1$ optimization problem \eqref{eq: maximizing revenue} 
%\begin{equation}\label{eq:obj Amy}
%	\min_{p,\mu} f(p,\mu)=-p\lambda(p)+h_0\EE\left[Q_\infty(p,\mu)\right]+c\mu.
%\end{equation}
with a linear staffing cost $c(\mu) = c\mu.$ 
Because this problem is not amenable to analytic treatments (due to the complex $GI/GI/1$ queueing dynamics), the authors resort to the heavy-traffic approximation by constructing a sequence of $GI/GI/1$ queues indexed by a scaling factor $n$, where the $n^{\rm th}$ model has an arrival rate $\lambda^n(p)\equiv n\lambda_0(p)$ which grows to infinity as $n$ increases.  
The authors propose an asymptotically optimal solution 
\begin{equation}\label{eq: Amy's result}
%(p^{(n)},\mu^{(n)})=(p^*,n\mu^*+o(n)),
(\tilde{p}^{(n)},\tilde{\mu}^{(n)})=\left(\hat{p}^*,n\hat{\mu}^*+\sigma\sqrt{\frac{h_0n}{2c}}\right)
\end{equation}
where $\sigma=\sqrt{\text{Var}(U_i)+\text{Var}(V_i)}$, and $U_i$ and $V_i$ are defined in Assumption \ref{assmpt: light tail}, and $(\hat{p}^*,\hat{\mu}^*)$ solves a deterministic static planning problem:
\begin{equation}\label{eq: Amy SPP}
\min_{p,\mu} f_0(p,\mu)=-p\lambda_0(p)+c\mu.
\end{equation}
We remark that the solution in \cite{LeeWard2014} requires the precise knowledge of the second moments of service and arrival times (e.g., the term $\sigma$ in \eqref{eq: Amy's result}), but such information is not needed in GOLiQ.

\paragraph{\textbf{Experiment settings.}}
We consider an $M/GI/1$ model with a phase-type service-time distribution, and a logit demand $\lambda(p) = n\lambda_0(p)$ in \eqref{logisticLmd} where the base demand rate $\lambda_0(p)$ has $a=4.1$ and the market size $n$ plays the role of the scaling factor. We fix the delay cost $h_0=1$ throughout this experiment. To quantify the regret, we obtain the exact optimal policy using the Pollaczek-Khinchine formula for the queue-length function 
\begin{equation}
\EE[Q_\infty(p,\mu)]=\rho+\frac{\rho^2}{1-\rho}\frac{1+c_s^2}{2},
\label{eq: PK}
\end{equation}
where $c_s^2\equiv Var(U_i)/\EE[U_i]^2$ is the \textit{squared coefficient of variation} (SCV) for the service time.
We next describe the detailed settings for comparing GOLiQ to heavy-traffic solution in  \cite{LeeWard2014}, dubbed LW. In order to benchmark the regret of our GOLiQ to that of LW, we continue to consider a dynamic environment where the number of cycles $k$ increases. In the $k^{\rm th}$ cycle,
\begin{itemize}
\item the LW policy remains fixed at $(\tilde{p}^{(n)},\tilde{\mu}^{(n)})$ as in \eqref{eq: Amy's result} (it does not evolve with $k$);
\item our online learning policy is dynamically updated according to GOLiQ  (Algorithm \ref{alg: direct}).
\end{itemize}
Because the LW policy is an approximation, it will yield a linear regret as $k$ increases. But LW's linear regret should not be too steep when $n$ is large enough. In contrast, although GOLiQ is guaranteed to generate a sublinear regret, it is expected to have a larger reget increment at the earlier ``exploration" stage, because it is learning without the supervision of the fluid or diffusion limits (as in the LW approach).  
Nevertheless, we expect that GOLiQ will eventually outperform the LW method  (exhibiting a lower regret level) when $k$ is sufficiently large. 
We next numerically study how soon GOLiQ surpasses LW and the impact of the following three parameters:
\begin{enumerate}
\item[(i)] staffing cost $c$;
\item[(ii)] service-time SCV $c_s^2$;
\item[(iii)] market size $n$ (i.e., system scale).
\end{enumerate}
%To illustrate the point ``learning from scratch" for GOLiQ, 
We intentionally set the initial decision $(\mu_0,p_0)$ of GOLiQ far from the optimal solution $(\mu^*,p^*)$ in the experiment.

\paragraph{\textbf{Experiment results.}} In Figure \ref{fig:Compare_Amy}, we report results of regret for both GOLiQ and LW. For the three factors $c$, $c_s^2$ and $n$, we change one at a time (with the other two held fixed). 
In Panels (a)-(c), we vary the  staffing cost $c$ from 0.5 to 2. 
In Panels (d)-(f), we vary the service-time SCV $c_s^2$ from 0.1 to 10. Here the cases $c_s^2 = 0.1$, 1, and 10 are achieved by considering Erlang, exponential, and hyperexponential service-time distributions. In Panels (g)-(i), we vary the system scale $n$ from 1 to 25. 
In all of the cases, we use hyper-parameter $\eta_k=5k^{-1}$ and $D_k=10+10\log(k)$. Monte-Carlo estimates of the regret curves are obtained by averaging 100 independent runs.

We can see from Figure \ref{fig:Compare_Amy} that, in all cases, GOLiQ will eventually establish a lower regret level than the LW policy. Varying these three factors clearly has an significant impact on how soon GOLiQ outperforms LW. 
\begin{figure}[H]
\vspace{-0.05in}
\hspace{-0.05in}
\includegraphics[width=1.05\linewidth]{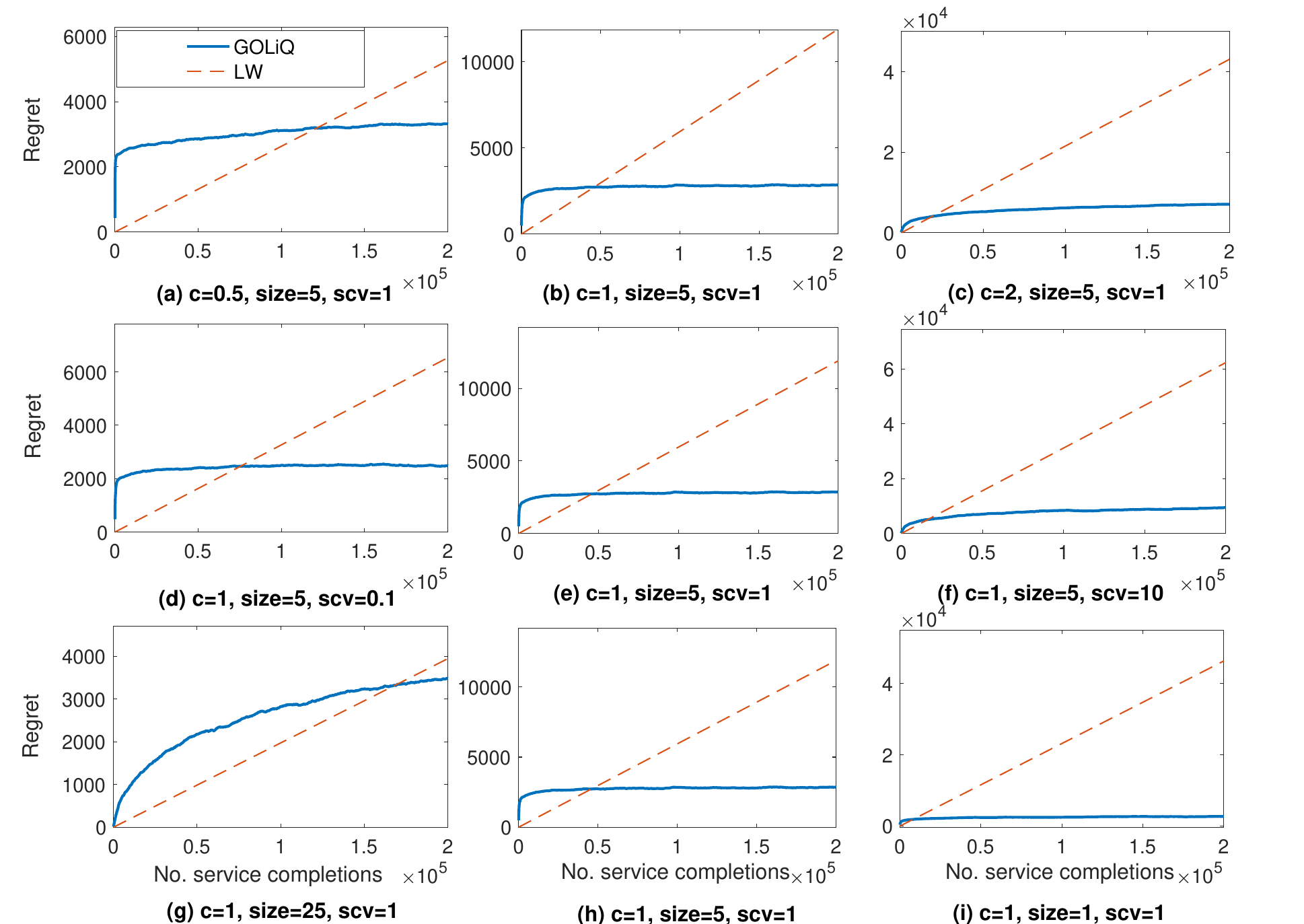}
\caption{Regret comparison to heavy-traffic approximation, with varying (i) staffing cost $c$ (panels (a)--(c)), (ii) service variability $c_s^2$ (panels (d)--(f)), and (iii) market size $n$ (panels (g)--(i)).
	Hyperparameters are $\eta_k=5k^{-1} $and $D_k=10+10\log(k)$ for all instances. All regret are estimated by averaging 500 independent simulation runs.}
\label{fig:Compare_Amy}
\vspace{-0.15in}
\end{figure}
Our findings are summarized below:
\begin{itemize}
\item \textbf{Staffing cost $c$:} Figure \ref{fig:Compare_Amy} shows that GOLiQ intends to outperform LW when $c$ is relatively large. We provide our explanations below. First, a larger staffing cost $c$ will induce a smaller $\mu^*$, which leads to a longer waiting queue. On the other hand, note that the LW solution is primarily based on solving the deterministic static problem \eqref{eq: Amy SPP}; and unlike the stochastic revenue optimization problem \eqref{eq: maximizing revenue}, the objective function of \eqref{eq: Amy SPP} overlooks the queue-length holding cost. This explains why GOLiQ gains  its advantage over LW as $c$ increases.    See Panels (a)-(c) of Figure \ref{fig:Compare_Amy}.
\item \textbf{Service SCV $c_s^2$:} When the service-time SCV is smaller, the LW method intends to work better, because the basic idea of LW stems from solutions of a fluid model (where the service times are assumed deterministic). On the other hand, when $c_s^2$ is larger, the system becomes more variable so that our learning-based algorithm begins to excel (because GOLiQ takes into account real-time information dynamically). See Panels (d)-(f) of Figure \ref{fig:Compare_Amy}.
\item \textbf{Market size $n$:} When $n$ is small, LW loses its advantages because it arises from the large-scale limit of the $GI/GI/1$ queue which requires $n$ to be sufficiently large. While the performance of our GOLiQ is robust to the system scale. See Panels (g)-(i).
\item \textbf{Performance in the long run:} %According to Figure \ref{fig:Compare_Amy}, 
GOLiQ is a more effective approach in the long run, because the LW solution remains static and its error grows linearly as time increases.
\end{itemize}
\begin{remark}[Different philosophies: online learning vs. heavy traffic]\label{rem:phi}
We emphasize that online learning and heavy-traffic analysis are two
methodologies developed based on distinct philosophies. First, when the system size is large, heavy-traffic models are able to produce high-fidelity solutions, but they require more prior knowledge of the system as inputs. On the other hand, online learning requires less prior understanding of the system, because the data-driven nature allows it to dynamically evolve and improve (whereas heavy-traffic solutions are static). 
Second, the notions of asymptotic optimality are different.
As an approximate method, heavy-traffic analysis is said to be asymptotically optimal in the sense that as the system size grows large, its solution will become close to the true optimal solution. On the other hand, the solution of the online learning method will converge to the true optimal solution as the server's experience accumulates (by serving more and more customers). 
\end{remark}

%%%%%%%%%%%%%%%%%%%%%%%%%%%%%%%%%%%%%%%%%%%%%
%%%%%%%%%%%%%%%%%%%%%%%%%%%%%%%%%%%%%%%%%%%%%

\section{Conclusion}\label{sec: conclusions}
In this paper we develop an online learning framework designed for dynamic pricing and staffing in queueing systems. The ingenuity of this approach lies in its online nature, which allows the service provider to continuously obtain improved pricing and staffing policies by interacting with the environment. The environment here is interpreted as everything beyond the service provider's knowledge, which is the composition of the random external demand process and the complex internal queueing dynamics.
The proposed algorithm organizes the time horizon into successive operational cycles, and prescribes an efficient way to update the service provider's policy in each cycle using data collected in previous cycles. Data include the number of customer arrivals, waiting times, and the server's busy times.

A key appeal of the online learning approach is its insensitivity to the scale of the queueing system, as opposed to the heavy-traffic analysis, which requires the system to be in large scale (with the arrival and service rate both approaching infinity). 
%In addition, due to its ``data-driven" effect, the online learning algorithm is robust to the distributions of service and arrival times; in particular the framework proposed in the present work requires only first-moment information of these random elements.
Effectiveness of our online learning algorithm is substantiated by (i) theoretical results including the algorithm convergence and regret analysis, and (ii) engineering confirmation via simulation experiments of a variety of representative $GI/GI/1$ queues.
Theoretical analysis of the regret bound in the present paper may shed lights on the design of efficient online learning algorithms (e.g., bounding gradient estimation error and controlling proper learning rate) for more general queueing systems.

{There are several venues for future research. One natural extension would be to develop new regret analyses that do not require the uniform stability condition.
Another interesting and promising direction is to develop an online learning method without assuming the knowledge of the arrival rate function $\lambda(p)$, where the learner (hereby the service provider), during the interactions with the environment, will have to resolve the tension between obtaining an accurate estimation of the demand function and optimizing returns over time.
A third dimension is to extend the methodology to more general model settings (e.g., queues having customer abandonment and multiple servers), which will make the framework more practical for service systems such as call centers and healthcare. {In this regard, results in the present paper may serve as useful foundations; in particular, Theorems \ref{thm: non-stationary error} and \ref{thm: broadiezeevi} will help construct desired regret bounds as long as their associated conditions can be verified. Doing so usually requires two main steps in a new queueing model: 
	(i) proving a new ergodicity (or rate of convergence to stationarity) result that can be used to bound the regret of nonstationarity; 
	%in Theorem \ref{thm: non-stationary error} relies on establishing ergodicity results, i.e. rate of convergence to stationarity, for the queueing process in the new model setting. 
	(ii) designing a new gradient estimator which is easily  computed from data (here a good gradient estimator should have small bias and variance subject to conditions in Theorem \ref{thm: broadiezeevi}).}} %Convergence and regret analysis for online learning methods in other queueing models may be developed based on results in Section \ref{sec: framework}; for example, it suffices to devise proper gradient estimators that satisfy the sufficient conditions in Theorems \ref{thm: non-stationary error} and \ref{thm: broadiezeevi}.

\bibliography{references}
\bibliographystyle{chicago}
%%%%%%%%%%%%%%%%%%%%%%%%%%%%%%%%%%%%%%%%%%%%%%%%%%%%%%%%%%%%%%%%%%%%%%%%%%%%%%%%%%%%%%%%%%%%%%%%%%%%%%%%%%%%%%%%%%
\bigskip
\begin{center}
{\large\bf SUPPLEMENTARY MATERIAL}
\end{center}

This Supplementary Material provides supplementary materials to the main paper.
In Section \ref{sec: transient analysis proofs}, we give all the
technical proofs omitted from the main paper.
%In \ref{sec: cmp heavy-traffic}, we compare SGD-Q algorithm with heavy-traffic approximation method in detail.
In Section \ref{sec: para_robust}, we test the robustness of GOLiQ with respect to key algorithmic hyperparameters.
In Section \ref{sec: cmp_Tim}, we compare GOLiQ to the online learning method in \cite{Huh2009}.
In Section \ref{sec: AddNum}, we report additional numerical studies.
%In \ref{subsec: convexity exampls}, we give further discussions on Assumption \ref{assmpt: convexity}.
To facilitate readability, we formally summarize all notations in Table \ref{tab: notation table} including all model parameters and functions, algorithmic hyperparameters, and constants in the regret analysis.

%%%%%%%%%%%%%%%%%%%%%%%%%%%%%%%%%%%%

%%%%%%%%%%%%%%%%%%%%%%%%%%%%%%%%%%%%%

\section{Proofs}\label{sec: transient analysis proofs}
%In this section, we provide the proofs of Lemma \ref{lmm: thm1 1} and Lemma \ref{lmm: steady state continuity} introduced in Section \ref{subsec: transient analysis} that play important roles in the transient analysis.  We start with the proof of  Lemma \ref{lmm: thm1 1}.

\subsection{Proof of Lemma \ref{lmm: uniform bound}}\label{subsect: uniform bound}
Let $Q_n^k$ be the queue length when customer $n-1$ in cycle $k$ leaves the system. Then $Q_k=Q_{D_{k-1}}^{k-1}+1$. The proof follows a stochastic ordering argument for $GI/GI/1$ models. Let $\hat{W}_n^k$, $\hat{X}_n^k$ and $\hat{Q}_n^k$ be the waiting times, observed busy periods, and queue length process in a $GI/GI/1$ queue with stationary control parameter $\mu_k\equiv \underline{\mu}$ and $p_k\equiv \underline{p}$, and with steady-state initial state, i.e.,  $\hat{W}_0^1\stackrel{d}{=} W_\infty(\underline{\mu},\underline{p})$, $\hat{X}_0^1\stackrel{d}{=} X_\infty(\underline{\mu},\underline{p})$ and $\hat{Q}_0^1\stackrel{d}{=} Q_\infty(\underline{\mu},\underline{p})$. Let's call this system the dominating system. Then, for all $k$,
$$\frac{U_n^k}{\lambda_{n}^k}\geq \frac{U_n^k}{\lambda(\underline{p})}, \text{ for }n=1,2,...,Q_k, \text{ and }\frac{U_n^k}{\lambda_k}\geq \frac{U_n^k}{\lambda(\underline{p})}, \text{ for }n=Q_k+1,2,...,D_k,$$
i.e., the arrival process in the dominating queue is the \textit{upper envelope process} (UEP) for all possible arrival processes corresponding to any control sequence $(\mu_k,p_k)$. Similarly, the service process in the dominating queue is the \textit{lower envelope process} (LEP) for all possible service processes corresponding to any control sequence. As a consequence, since $W_0^1=0$ and $Q_0^1=0$, $$W_n^k\leq_{st}\hat{W}_n^k, ~X_n^k\leq_{st}\frac{\lambda(\underline{p})}{\lambda(\bar{p})}\cdot\hat{X}_n^k, ~Q_n^k\leq_{st}\hat{Q}_n^k.$$
%Under Assumptions \ref{assmpt: uniform} and \ref{assmpt: light tail}, $\underline{\mu}>\lambda(\underline{p})$ and $\EE[\exp(\eta V_n/\underline{\mu})]<\infty$ for $\eta>0$. 
{Under Assumption \ref{assmpt: light tail}, the moment generating function of the random variable  $V_n/\underline{\mu}-U_n/\lambda(\underline{p})$ exists around the origin. Following \cite{BlanchetChen_2015}, under Assumption \ref{assmpt: uniform}, this condition can further imply that there exists a constant $\bar{\eta}>0$ such that
	$\EE[\exp\left(\bar{\eta}(V_n/\underline{\mu}-U_n/\lambda(\underline{p}))\right)]=1.$ (See the Remark on p.3222 in \cite{BlanchetChen_2015})}
Then, following Theorem 1 of \cite{abate1995exponential}, there exists a constant $\alpha>0$ such that
$\mathbb{P}(\hat{W}_n^k>x)\leq \alpha\exp(-\bar{\eta}x), \text{ for all }x>0$.
As a consequence, $\EE[\exp(\eta \hat{W}_n^k)]$ is finite for $\eta<\bar{\eta}$, and so are $\EE[(\hat{W}_n^k)^m]$ for all $m\geq 1$.
Given that the moments of waiting times are finite, we can conclude that  $\EE[(\hat{Q}_n^k)^m]$ and $\EE[\exp(\eta \hat{Q}_n^k)]$ are finite for all $m\geq 1$, applying Theorem 10.4.3 in 
\cite{AsmussenBook}. Finally, the moments of the observed busy period $\EE[(\hat{X}_n^k)^m]$ are finite following Proposition 4.2 in \cite{nakayama2004finite}.
%$\EE[(\hat{W}_n^k)^m]$, $\EE[(\hat{X}_n^k)^m]$, $\EE[(\hat{Q}_n^k)^m]$, $\EE[\exp(\eta \hat{W}_n^k)]$, $\EE[\exp(\eta Q_n^k)]$ are finite for all $m\geq 1$, see Chapter 10 of \cite{AsmussenBook} for more details. 
Therefore, we choose
$$M=\max_{1\leq m\leq 4}\left\{\EE[(\hat{W}_n^k)^m], \ \frac{\lambda(\underline{p})^m}{\lambda(\bar{p})^m}  \EE[(\hat{X}_n^k)^m], \ \EE[(\hat{Q}_n^k+1)^m], \ \EE[\exp(\eta \hat{W}_n^k)], \ \EE[\exp(\eta( Q_n^k+1))]\right\},$$
and this closes our proof.\hfill $\Box$

\subsection{Proof of Lemma \ref{lmm: thm1 1}}
For $i\in\{1,2\}$, define stopping times $\Gamma_i = \min\{n: W^i_n = 0\}$. % Let $\gamma = \gamma_0\cdot \min(1/\bar{\mu},1/\lambda(\underline{p}))$ with $\gamma_0$ as defined in Assumption \ref{assmpt: light tail}. % To prove Lemma \ref{lmm: thm1 1}, we show the following statements are true.
For  a fixed pair of inter-arrival and service time sequences, the consequent waiting time sequence $W_k$ in a single-server queue is monotone in its initial state $W_0$.  Without loss of generality, assume $W^1_0\geq W^2_0$. Then, $W^1_n\geq W^2_n$ for all $n\geq 1$ and therefore, $W^1_{\Gamma_1} = W^2_{\Gamma_1}=0$.
As the two queues are coupled with the same arrival and service time sequences, we will have $W^1_n = W^2_n$ for all $n\geq \Gamma_1$. Therefore, we can conclude $W^1_n = W^2_n$ for all $n\geq \max(\Gamma_1,\Gamma_2)$. For $n\leq \max(\Gamma_1,\Gamma_2)$, we have $|W^1_n-W^2_n|\leq |W^1_0-W^2_0|$ following \cite{KellaRamasubramanian_2012}.

%Now let $\phi_S(t)=\log \EE[\exp(tS_1)]$ and $\phi_\tau(t)= \log \EE[\exp(t\tau_1)]$ be the cumulant generating functions of the service and inter-arrival times. Then, $\phi_S(\mu t) = \phi_V(t)$ and $\phi_{\tau}(\lambda(p)t)=\phi_{U}(t)$. 
For simplicity of notation, we write $\lambda=\lambda(p)$. %For simplicity of notation, we write $s = 1/\mu$ and $u = 1/\lambda(p)$ as the mean service and inter-arrival times, respectively. 
%Then, under Assumption \ref{assmpt: light tail},  for $\theta$ as defined in \eqref{eq: gamma0}, we have
%	\begin{equation}\label{eq: gamma}
	%	\phi_\tau(-\lambda\theta) < -(1-a)\theta -\gamma_0, \quad\phi_S(\mu\theta) < (1+a)\theta -\gamma_0.
	%	\end{equation}
For $i\in\{1,2\}$,  define a random walk $R^i_{n+1} = R^i_n + S_n- \tau_n$ with $R^i_0= W^i_0$. (Recall that $S_n$ and $\tau_n$ are the sequences of service and inter-arrival times.) By Lindley recursion,	$\Gamma_i = \min\{n: R^i_n\leq 0\}$.
Then, for any  $n \geq 1$,
\begin{align*}
	\PP(\Gamma_i \leq n) &\geq \PP\left(\sum_{k=1}^n (S_k-\tau_k)<-W_0^i\right)\\
	&\geq \PP\left(\lambda\sum_{k=1}^n \tau_k \geq n(1-a), \mu\sum_{k=1}^n S_k \leq n(1+a) -\mu W^i_0\right),
\end{align*}
where the second inequality holds as $(1-a)/\lambda>(1+a)/\mu$ given that $0<a<(\underline{\mu}-\lambda(\underline{p}))/(\underline{\mu}+\lambda(\underline{p}))$ and that $\lambda/\mu\leq \lambda(\underline{p})/\underline{\mu}$.
Recall that $\tau_k=U_k/\lambda$ and $S_k=V_k/\mu$. Therefore,
\begin{align*}
	\PP(\Gamma_i > n) \leq \PP\left( \sum_{k=1}^n U_k <  n(1-a)\right) +\PP\left(\sum_{k=1}^n V_k > n(1+a)-\mu W^i_0\right).
\end{align*}
Following Chebyshev's Inequality, we have
\begin{equation*}
	\begin{aligned}
		\PP\left(\sum_{k=1}^n V_k >n(1+a)-\mu W^i_0\right)&\leq\frac{\EE[\exp(\theta \sum_{k=1}^n V_k)]}{\exp(n\theta(1+a)-\mu\theta W^i_0)} = \exp(n(\phi_V(\theta)-(1+a)\theta))\exp(\mu\theta W^i_0)\\&\leq \exp(-n\gamma)\exp(\mu\theta W^i_0),
	\end{aligned}
\end{equation*}
where the last inequality follows from Assumption \ref{assmpt: light tail}. On the other hand, let $Q$ be an exponentially tilted probability measure with respect to $U$, such that the likelihood ratio $\frac{dQ}{dP}(U) = \exp(-\theta U -\phi_U(-\theta))$. Then,
\begin{align*}
	&\PP\left(\sum_{k=1}^n U_k < n(1-a)\right)=\EE^Q\left[\exp\left(\theta\sum_{k=1}^n U_k +n\phi_U(-\theta)\right){\bf 1}_{\left\{\sum_{k=1}^n U_k<n(1-a)\right\}}\right] \\
	\leq & \exp(n(1-a)\theta + n\phi_U(-\theta))=\exp(n((1-a)\theta +\phi_U(-\theta)))\leq \exp(-n\gamma).
\end{align*}
In summary, we have $\PP(\Gamma_i>n)\leq \exp(-n\gamma)\left(1+\exp(\mu\theta W^i_0)\right)$, $i=1,2$.
%$$P(\tau_i>k)\leq P(R^i(k)>0)\leq E[\exp(\eta R_i(k))].$$
%Under the \textcolor{red}{assumption}, we have $E[\exp(\eta R_i(k))] = \exp(\eta W^i(0)-\gamma k)$. Hence,
%$$P(\tau_i>k)\leq e^{-\gamma k} \exp(\eta W^i(0)).$$
So, we can conclude
\begin{equation*}
	\begin{aligned}
		\EE[|W^1_n-W^2_n|^m]&\leq \PP(\max(\Gamma_1, \Gamma_2)>n)|W^1_0-W^2_0|^m\\
		&\leq e^{-\gamma n}\left(2+e^{\mu\theta W^1_0}+e^{\mu\theta W^2_0}\right)|W^1_0-W^2_0|^m.\qquad \Box
	\end{aligned}
\end{equation*}
\subsection{Proof of Lemma \ref{lmm: Lipschitz}}
Define two auxiliary random walks: $$Y_n=W_0+\sum_{i=1}^n \left(\frac{V_i}{\mu_i}-\frac{U_i}{\lambda_i}\right), \tilde{Y}_n=\tilde{W}_0+\sum_{i=1}^n \left(\frac{V_i}{\tilde{\mu}_i}-\frac{U_i}{\tilde{\lambda}_i}\right).$$
Then, for any $n\geq 1$, we could express $W_n$ and $\tilde{W}_n$  as
$$W_n = Y_n -\min_{1\leq m\leq n} Y_m \wedge 0,\quad \tilde{W}_n = \tilde{Y}_n -\min_{1\leq m\leq n} \tilde{Y}_m \wedge 0.$$
Let $\tau=\argmin_{1\leq m\leq n} Y_m$ and  $\tilde{\tau}=\argmin_{1\leq m\leq n} \tilde{Y}_m$. Note that following the above notation, for each $n$, $W_n$ is the waiting time of customer $n$ and as a consequence, $\frac{U_n}{\lambda_n}$ should be understood as the inter-arrival time between customers $n-1$ and $n$, and $\frac{V_n}{\mu_n}$ as the service time of customer $n-1$.

\textbf{Case 1:} If $Y_\tau\leq 0$ and $\tilde{Y}_{\tilde{\tau}}\leq 0$, i.e., both $W_t$ and $\tilde{W}_t$ hit zero before $n$, we have
\begin{align*}
	Y_n-Y_{\tilde{\tau}} -(\tilde{Y}_n-\tilde{Y}_{\tilde{\tau}})\leq W_n-\tilde{W}_n = Y_n-Y_\tau -(\tilde{Y}_n-\tilde{Y}_{\tilde{\tau}})\leq Y_n-Y_\tau -(\tilde{Y}_n-\tilde{Y}_{\tau}).
\end{align*}
So, in this case
$$|W_n-\tilde{W}_n|\leq \sum_{i=\tau\wedge\tilde{\tau}+1}^{n}\left|\frac{1}{\mu_i}-\frac{1}{\tilde{\mu}_i}\right|V_i+\sum_{i=\tau\wedge\tilde{\tau}+1}^{n}\left|\frac{1}{\lambda_i}-\frac{1}{\tilde{\lambda}_i}\right|U_i.$$
Recall that $X_n$ (and $\tilde{X}_n$) is the age of the server's busy time observed by customer $n$ upon arrival. By definition, $W_{\tau}=0$ and therefore,
$$X_n = \sum_{i=\tau+1}^n\frac{U_i}{\lambda_i},\quad X_n+W_n= \sum_{i=\tau+1}^n\frac{V_i}{\mu_i}.$$
The second equation holds as the server has just served $n-\tau$ customers (indexed from $\tau$ to $n-1$) in the current busy cycle when customer $n$ enters service. Then,
$$\sum_{i=\tau+1}^{n}\left|\frac{1}{\mu_i}-\frac{1}{\tilde{\mu}_i}\right|V_i+\sum_{i=\tau+1}^{n}\left|\frac{1}{\lambda_i}-\frac{1}{\tilde{\lambda}_i}\right|U_i\leq \frac{c_\mu}{\underline{\mu}}(X_n+W_n) + \frac{c_\lambda}{\underline{\lambda}}X_n.$$
Following a similar argument, we have
$$\sum_{i=\tilde{\tau}+1}^{n}\left|\frac{1}{\mu_i}-\frac{1}{\tilde{\mu}_i}\right|V_i+\sum_{i=\tilde{\tau}+1}^{n}\left|\frac{1}{\lambda_i}-\frac{1}{\tilde{\lambda}_i}\right|U_i\leq \frac{c_\mu}{\underline{\mu}}(\tilde{X}_n+\tilde{W}_n) + \frac{c_\lambda}{\underline{\lambda}}\tilde{X}_n.$$
Therefore, in this case, we have
%$$\EE[|W_n-\tilde{W}_n|]\leq \left(\frac{c_\mu}{\underline{\mu}}+\frac{c_\lambda}{\underline{\lambda}}\right)\max(X_n,\tilde{X}_n)+\frac{c_\mu}{\underline{\mu}}\max(W_n,\tilde{W}_n).$$}
$$|W_n-\tilde{W}_n|\leq \left(\frac{c_\mu}{\underline{\mu}}+\frac{c_\lambda}{\underline{\lambda}}\right)\max(X_n,\tilde{X}_n)+\frac{c_\mu}{\underline{\mu}}\max(W_n,\tilde{W}_n).$$

\textbf{Case 2:}
If $Y_\tau> 0$ or $\tilde{Y}_{\tilde{\tau}}> 0$, we can inductively derive that
$$|W_n-\tilde{W}_n|\leq |W_0-\tilde{W}_0|+\sum_{i=1}^n\left|\frac{1}{\mu_i}-\frac{1}{\tilde{\mu}_i}\right|V_i+\sum_{i=1}^n\left|\frac{1}{\lambda_i}-\frac{1}{\tilde{\lambda}_i}\right|U_i.$$
{
In detail, it suffices to show that, for all $1\leq m\leq n$, 
\begin{equation}|W_m-\tilde{W}_m|\leq |W_{m-1}-\tilde{W}_{m-1}|+\left|\frac{1}{\mu_m}-\frac{1}{\tilde{\mu}_m}\right|V_m+\left|\frac{1}{\lambda_m}-\frac{1}{\tilde{\lambda}_m}\right|U_m.\label{eq: lmm3 case2}
\end{equation}
Without loss of generality, we assume $Y_\tau> 0$. By definition, $Y_\tau=\min_{1\leq l\leq n} Y_l$ and hence $W_l=Y_l>0$ for all $1\leq l\leq n$. Then,
\begin{equation*}
	|W_m-\tilde{W}_m|=\left|  W_{m-1}-\frac{U_m}{\lambda_m}+\frac{V_m}{\mu_m}- \left(  \tilde{W}_{m-1} -\frac{U_m}{\tilde{\lambda}_m}+\frac{V_m}{\tilde{\mu}_m} \right)^+     \right|.
\end{equation*}
If $\tilde{W}_m>0$, we have 
\begin{align*}
	|W_m-\tilde{W}_m|&=\left|  W_{m-1}-\frac{U_m}{\lambda_m}+\frac{V_m}{\mu_m}- \left(  \tilde{W}_{m-1} -\frac{U_m}{\tilde{\lambda}_m}+\frac{V_m}{\tilde{\mu}_m} \right)     \right|\\
	&\leq |W_{m-1}-\tilde{W}_{m-1}|+\left|\frac{1}{\mu_m}-\frac{1}{\tilde{\mu}_m}\right|V_m+\left|\frac{1}{\lambda_m}-\frac{1}{\tilde{\lambda}_m}\right|U_m.
\end{align*}
On the other hand, if $\tilde{W}_{m}=0$, we have $\tilde{W}_{m-1}-\frac{U_m}{\tilde{\lambda}_m}+\frac{V_m}{\tilde{\mu}_m}\leq 0.$ So,
\begin{align*}
	|W_m-\tilde{W}_m|&=W_m-0\leq W_m-\left(  \tilde{W}_{m-1} -\frac{U_m}{\tilde{\lambda}_m}+\frac{V_m}{\tilde{\mu}_m} \right)\\
	&=\left|  W_{m-1}-\frac{U_m}{\lambda_m}+\frac{V_m}{\mu_m}- \left(  \tilde{W}_{m-1} -\frac{U_m}{\tilde{\lambda}_m}+\frac{V_m}{\tilde{\mu}_m} \right)     \right|\\
	&\leq|W_{m-1}-\tilde{W}_{m-1}|+\left|\frac{1}{\mu_m}-\frac{1}{\tilde{\mu}_m}\right|V_m+\left|\frac{1}{\lambda_m}-\frac{1}{\tilde{\lambda}_m}\right|U_m.
\end{align*}
This closes the proof of \eqref{eq: lmm3 case2}.
}
%Without loss of generality we assume $Y_\tau\geq 0$. 

As a result of \eqref{eq: lmm3 case2}, if $Y_\tau>0$, we can conclude the system (associated with $(\mu_n,\lambda_n)$) was kept busy from time 0 until customer $n$ enters service. As a consequence, as $X_0\geq 0$, we have
$$X_n \geq \sum_{i=1}^n\frac{U_i}{\lambda_i},\quad X_n+W_n\geq \sum_{i=1}^n\frac{V_i}{\mu_i}.$$
Therefore, 
%$$\sum_{i=1}^{n}\left|\frac{1}{\mu_i}-\frac{1}{\tilde{\mu}_i}\right|V_i+\sum_{i=1}^{n}\left|\frac{1}{\lambda_i}-\frac{1}{\tilde{\lambda}_i}\right|U_i\leq \frac{c_\mu}{\underline{\mu}}(\hat{X}_n+\hat{W}_n) + \frac{c_\lambda}{\underline{\lambda}}\hat{X}_n,$$
$$\sum_{i=1}^{n}\left|\frac{1}{\mu_i}-\frac{1}{\tilde{\mu}_i}\right|V_i+\sum_{i=1}^{n}\left|\frac{1}{\lambda_i}-\frac{1}{\tilde{\lambda}_i}\right|U_i\leq \frac{c_\mu}{\underline{\mu}}(\max(X_n,\tilde{X}_n)+\max({W}_n,\tilde{W}_n)) + \frac{c_\lambda}{\underline{\lambda}}\max(X_n,\tilde{X}_n),$$
and hence we can also conclude
%$$\EE[|W_n-\tilde{W}_n|]\leq |W_0-\tilde{W}_0|+\left(\frac{c_\mu}{\underline{\mu}}+\frac{c_\lambda}{\underline{\lambda}}\right)\max(X_n,\tilde{X}_n)+\frac{c_\mu}{\underline{\mu}}\max(W_n,\tilde{W}_n).$$
$$|W_n-\tilde{W}_n|\leq |W_0-\tilde{W}_0|+\left(\frac{c_\mu}{\underline{\mu}}+\frac{c_\lambda}{\underline{\lambda}}\right)\max(X_n,\tilde{X}_n)+\frac{c_\mu}{\underline{\mu}}\max(W_n,\tilde{W}_n).$$
\hfill$\Box{}$

\subsection{Proof of Lemma \ref{lmm: steady state continuity}}
By the inequality that $(a+b)^m\leq 2^{m-1}(a^m+b^m)$ for $m\geq 1$, we have
\begin{align*}
&\EE[|W_\infty(\mu_1,p_1)-W_\infty(\mu_2,p_2)|^m]\\
&\leq 2^{m-1}\left(\EE[|W_\infty(\mu_1,p_1)-W_\infty(\mu_2,p_1)|^m+|W_\infty(\mu_2,p_1)-W_\infty(\mu_2,p_2)|^m]\right).
\end{align*}
It  suffices to prove that there exist two constant $B_1, B_2 >0$ such that for $1\leq m\leq 4$,
\begin{equation*}
\begin{aligned}
	\EE[|W_\infty(\mu_1,p_1)-W_\infty(\mu_2,p_1)|^m]&\leq B_1|\mu_1-\mu_2|^m, \\
	\EE[|W_\infty(\mu_2,p_1)-W_\infty(\mu_2,p_2)|^m]&\leq B_2|p_1-p_2|^m.
\end{aligned}
\end{equation*}
Without loss of generality, assume $\mu_{1}<\mu_2$. We now construct two
stationary sequences
$\{(W_n^{\mu_i}:n\leq 0), i=1,2\}$
that are coupled ``from the past". Let $V_j$ and $U_j$ be two i.i.d sequences corresponding to the service and inter-arrival times. For each $i$, we define a random walk:
$$Y^{\mu_i}_0=0, ~Y^{\mu_i}_n = \sum_{j=1}^{n} \left(\frac{V_j}{\mu_i}-\frac{U_j}{\lambda(p_1)}\right),~ \forall n\geq 1.$$
It is clear that  $Y^{\mu_i}_n$ is a random walk with negative drift for $i=1, 2$. Define
$$W_{-n}^{\mu_i} = \max_{j\geq n}Y^{\mu_i}_j - Y^{\mu_i}_n, n\geq 0.$$
It is known in literature (see, for example, \cite{BlanchetChen_2015}) that $W_{-n}^{\mu_i}$ is a stationary waiting time process of a $GI/GI/1$ queue, starting from $-\infty$, with parameter $(\mu_i,p_1)$. In particular, the dynamics of $W_{-n}^{\mu_i}$ satisfies that
$$W_{-n+1}^{\mu_i} = \left(W_{-n}^{\mu_i}+\frac{V_{n}}{\mu_i}-\frac{U_n}{\lambda(p_1)}\right)^+,\text{ for }n\geq 1,$$
with $V_n/\mu_i$ being the service time of customer $-n$ and $U_n/\lambda(p_1)$ being the inter-arrival time between customer $-n$ and $-n+1$.
For a fixed sequence of $(V_n, U_n)$, we have
$$W_0^{\mu_1}=\max_{j\geq 0}Y_j^{\mu_1},\quad\text{and }W_0^{\mu_2}=\max_{j\geq 0}Y_j^{\mu_2}.$$
As $Y^{\mu_1}_j\geq Y^{\mu_2}_j$, we have $W_0^{\mu_1}\geq W_0^{\mu_2}$. Besides, let $\tau=\arg\max_{j\geq 0}Y^{\mu_1}_j$, we have
$$W_0^{\mu_1}-W_0^{\mu_2}=\max_{j\geq 0}Y_j^{\mu_1}-\max_{j\geq 0}Y_j^{\mu_2}=Y_\tau^{\mu_1}-\max_{j\geq 0}Y_j^{\mu_2}\leq Y_\tau^{\mu_1}-Y_\tau^{\mu_2}.$$
As a consequence, we have
$$|W_0^{\mu_1}-W_0^{\mu_2}|\leq \sum_{n=1}^{\tau}\left(\frac{V_n}{\mu_1}-\frac{V_n}{\mu_2}\right)\leq \frac{\mu_2-\mu_1}{\mu_1}\sum_{n=1}^{\tau}\frac{V_n}{\mu_1},\quad \text{ with }\tau = \inf\{n: W_{-n}=0\}.$$
%following an IPA-based analysis, we have
%$$\lim_{\delta\to 0} \frac{W_0^{\mu+\delta}-W_0^{\mu}}{\delta}=-\frac{1}{\mu}\sum_{n=1}^{\tau}\frac{V_n}{\mu},\quad \text{ with }\tau = \inf\{n: W_{-n}=0\},$$
Note that $V_n/\mu_1$ is the service time of customer $-n$ in the system with parameter $(p_1,\mu_1)$. By the definition of $\tau$, customer $-\tau$ enters service immediately upon the arrival and the queue remains busy by  arrival of customer $0$. Therefore, the summation of service times on the right hand side equals to the time between the arrival of customer $-\tau$ and the departure of customer $-1$, which equals to the observed busy period at the arrival of customer $0$ plus its waiting time, i.e.,
$$|W_0^{\mu_1}-W_0^{\mu_2}|\leq \frac{\mu_2-\mu_1}{\mu_1}\sum_{n=1}^{\tau}\frac{V_n}{\mu_1}= \frac{\mu_2-\mu_1}{\mu_1}(X_0^{\mu_1}+W_0^{\mu_1}).$$
Therefore, for each $n$,
$$\EE[|W_0^{\mu_1}-W_0^{\mu_2}|^m]\leq  \frac{(\mu_2-\mu_1)^m}{\mu_1^m}\EE[(X_0^{\mu_1}+W_0^{\mu_1})^m]\leq \frac{(\mu_2-\mu_1)^m}{\underline{\mu}^m}\EE[(X_0^{\mu_1}+W_0^{\mu_1})^m].$$
%	$$\lim_{\delta\to 0} \frac{W_0^{\mu+\delta}-W_0^{\mu}}{\delta}=-\frac{1}{\mu}\sum_{n=1}^{\tau}\frac{V_n}{\mu}=-\frac{1}{\mu}(X_0^\mu+W_0^\mu).$$
%	Therefore, for given sequence of $V_n$ and $U_n$,
%	\begin{equation*}
%	\begin{aligned}
	%	|W_0^{\mu_1}-W_0^{\mu_2}|\leq \int_{\mu_1}^{\mu_2} \frac{X_0^{\mu}+W_0^{\mu}}{\mu}d\mu.
	%	\end{aligned}
%	\end{equation*}
%	Then, for each $m$, by H\"{o}lder's inequality
%	\begin{equation*}
%	\begin{aligned}
	%	\EE[|W_0^{\mu_1}-W_0^{\mu_2}|^m]&=\EE\left[ \left(\int_{\mu_1}^{\mu_2} \frac{X_0^{\mu}+W_0^{\mu}}{\mu}d\mu\right)^m\right]\leq \EE\left[|\mu_1-\mu_2|^{m-1} \left(\int_{\mu_1}^{\mu_2} \frac{(X_0^{\mu}+W_0^{\mu})^m}{\mu_1^m}d\mu\right)\right]\\
	%	&=|\mu_1-\mu_2|^{m-1}  \int_{\mu_1}^{\mu_2} \frac{\EE\left[(X_0^{\mu}+W_0^{\mu})^m\right]}{\mu_1^m}d\mu\leq \frac{|\mu_1-\mu_2|^m}{\underline{\mu}^2}\max_{\mu_1\leq \mu\leq \mu_2} \EE[(X_0^\mu+W_0^\mu)^m].
	%	\end{aligned}
%	\end{equation*}
Following Lemma \ref{lmm: uniform bound}, $\EE[(X_0^{\mu_1}+W_0^{\mu_1})^m]\leq 2^m M$.
Let $B_1=\max_{1\leq m\leq 4}2^mM/\underline{\mu}^m$ and we conclude, for $1\leq m\leq 4$,
$$ \EE[|W_0^{\mu_1}-W_0^{\mu_2}|^m]\leq B_1|\mu_1-\mu_2|^m.$$
%$X_0^{\mu}$ is the observed busy period.

The bound for $\EE[|W_\infty(\mu_2,p_1)-W_\infty(\mu_2,p_2)|^m]$ follows a similar argument and therefore we only provide a sketch of the proof. Without loss of generality, we assume $p_1<p_2$ and consider two stationary waiting time process $\{(W_n^{p_i}:n\leq 0), \lambda_i=\lambda(p_i), i=1,2\}$ that are coupled from past with the same sequence $(V_n,U_n)$ in a similar way as we introduced previously. Then, we have $|W_0^{p_1}-W_0^{p_2}|\leq (\lambda_1-\lambda_2)X_0^{p_1}/\lambda_2$,
and therefore,
$$ \EE[|W_0^{p_1}-W_0^{p_2}|^m]\leq B_2|p_1-p_2|^m,\text{ with }B_2 =\max_{1\leq m\leq 4, \underline{p}\leq p\leq \bar{p}}(M|\lambda'(p)|^m/\lambda(\bar{p})^m).$$    As a consequence, we can take
\begin{equation}\label{eq: B}
B = 8\cdot\max_{1\leq m\leq 4}(2^m M/\underline{\mu}^m)\vee \max_{1\leq m\leq 4, \underline{p}\leq p\leq \bar{p}}(M|\lambda'(p)|^m/\lambda(\bar{p})^m).
\end{equation}

\subsection{Full Proof of Theorem \ref{thm: non-stationary error}}\label{subsect: non stationary proof}
We first give the proofs of Corollaries \ref{coro: convergence to stationarity}--\ref{coro: I3}.

\begin{proof}[Proof of Corollary \ref{coro: convergence to stationarity}]
For any $n\geq   d_k$,
$$\EE[|W_n^k-\bar{W}_n^k|] = \EE[|W_n^k-\bar{W}_n^k|1(Q_k<d_k)]+\EE[|W_n^k-\bar{W}_n^k|1(Q_k\geq d_k)].$$
Given that $Q_k<d_k$, by definition, $W_n^k$ is synchronously coupled with $\bar{W}_n^k$ for $n\geq d_k+1$. Note that given $Q_k<d_k$, $U_n^k$ and $V_n^k$ are independent of $Q_k$ for $n\geq d_k+1$. As a consequence, by Lemma \ref{lmm: thm1 1}, the conditional expectation
\begin{equation*}
	\begin{aligned}
		\EE\left[|W_n^k-\bar{W}_n^k|~\Big|~Q_k<d_k, W_{d_k}^k, \bar{W}_{d_k}^k\right]\leq e^{-\gamma(n-d_k)}(2+e^{\bar{\mu}\theta W_{d_k}^k}+e^{\bar{\mu}\theta \bar{W}_{d_k}^k})\left|W_{d_k}^k-\bar{W}_{d_k}^k\right|.
	\end{aligned}
\end{equation*}
Therefore,
\begin{equation*}
	\begin{aligned}
		\EE[|W_n^k-\bar{W}_n^k|1(Q_k<d_k)]&\leq e^{-\gamma(n-d_k)}\EE\left[(2+e^{\bar{\mu}\theta W_{d_k}^k}+e^{\bar{\mu}\theta \bar{W}_{d_k}^k})\left|W_{d_k}^k-\bar{W}_{d_k}^k\right|1(Q_k<d_k)\right]\\
		&\leq e^{-\gamma(n-d_k)}\EE\left[(2+e^{\bar{\mu}\theta W_{d_k}^k}+e^{\bar{\mu}\theta \bar{W}_{d_k}^k})\left|W_{d_k}^k-\bar{W}_{d_k}^k\right|\right]\\
		&\leq e^{-\gamma(n-d_k)}\left(2+\EE\left[\left(e^{\bar{\mu}\theta W_{d_k}^k}+e^{\bar{\mu}\theta \bar{W}_{d_k}^k}\right)^2\right]^{1/2}\right)\EE\left[\left|W_{d_k}^k-\bar{W}_{d_k}^k\right|^{2}\right]^{1/2}.
	\end{aligned}
\end{equation*}
By Lemma \ref{lmm: uniform bound} and Assumption \ref{assmpt: light tail}, we have
\begin{align*}
	\EE\left[\left(e^{\bar{\mu}\theta W_{d_k}^k}+e^{\bar{\mu}\theta \bar{W}_{d_k}^k}\right)^2\right]&\leq 2\left(\EE[e^{2\bar{\mu}\theta W_{d_k}^k}]+\EE[e^{2\bar{\mu}\theta \bar{W}_{d_k}^k}]\right)\leq 4M,\\
	\EE\left[|W_{d_k}^k-\bar{W}_{d_k}^k|^2\right]&\leq 2\left(\EE[(W_{d_k}^k)^2]+\EE[(\bar{W}_{d_k}^k)^2]\right)\leq 4M.
\end{align*}
As a consequence, we have
$$\EE[|W_n^k-\bar{W}_n^k|1(Q_k<d_k)]\leq  e^{-\gamma(n-d_k)}A, \text{with }A=4\sqrt{M}+4M.$$
On the other hand,
$$\EE[|W_n^k-\bar{W}_n^k|1(Q_k\geq d_k)]\leq \EE\left[|W_n^k-\bar{W}_n^k|^{2}\right]^{1/2}\PP(Q_k\geq d_k)^{1/2}.$$
Again, by Lemma \ref{lmm: uniform bound},
$\EE\left[|W_n^k-\bar{W}_n^k|^{2}\right]\leq 4M.$
As $d_k=\lceil 4\log(k)/\min(\gamma,\eta)\rceil$, 
$$\PP(Q_k\geq d_k)\leq e^{-\eta d_k}\EE\left[e^{\eta Q_k}\right]\leq k^{-4}M.$$
In summary, we have, for $n\geq d_k+1$,% \red{and $m=1, 2$ Guiyu: We cannot get $m=2$'s result from current version?},
$$\EE[|W_n^k-\bar{W}_n^k|] \leq e^{-\gamma(n-d_k)}A + 2Mk^{-2}.$$
As a direct consequence,
\begin{align*}
	|I_1|&=\left|\EE\left[\sum_{n=\tilde{d}_k+1}^{D_k}W_n^k-w(\mu_k,p_k)\right]\right|
	\leq \sum_{n=\tilde{d}_k+1}^{ D_k}\EE[|W_n^k-\bar{W}_n^k|]\\
	&\leq \sum_{n=\tilde{d}_k+1}^\infty e^{-\gamma(n-d_k)}A + 2Mk^{-2}D_k\leq \frac{A}{1-e^{-\gamma}}k^{-1} + 2MK_2k^{-\alpha}=O(k^{-\alpha}).
\end{align*}
\end{proof}

\begin{proof}[Proof of Corollary \ref{coro: error of leftovers}]
Recall that by \eqref{eq:RecDelay}, for each cycle $k$,	\begin{equation*}
	W_n^k =
	\begin{cases}
		\left(W_{n-1}^k + \frac{V^k_n}{\mu_k} - \frac{U^k_n}{\lambda^k_n}\right)^+ & \text{ for }1\leq n\leq Q_k{\wedge D_k};\\
		\left(W_{n-1}^k + \frac{V^k_n}{\mu_k} - \frac{U^k_n}{\lambda_{k}}\right)^+ & \text{ for } (Q_k+1){\wedge(D_k+1)}\leq n\leq D_k.
	\end{cases}, \quad W_0^k = W_{D_{k-1}}^{k-1}.
\end{equation*}
Define
\begin{equation*}
	\tilde{W}_n^k =
	\begin{cases}
		\left(\tilde{W}_{n-1}^k + \frac{V^k_n}{\mu_k} - \frac{U^k_n}{\lambda_{k-1}}\right)^+ & \text{ for }1\leq n\leq Q_k{\wedge D_k};\\
		\left(\tilde{W}_{n-1}^k + \frac{V^k_n}{\mu_k} - \frac{U^k_n}{\lambda_{k}}\right)^+ & \text{ for } (Q_k+1){\wedge(D_k+1)}\leq n\leq D_k.
	\end{cases}, \quad \tilde{W}_0^k = W_{D_{k-1}}^{k-1}.
\end{equation*}
Then, in the case $Q_{k-1}<D_{k-1}$, we have $W_n^k=\tilde{W}_n^k$ for all $1\leq n\leq D_k$. As a consequence, we have
%$$|W_n^k-\bar{W}_{D_{k-1}+n}^{k-1}|\leq |\tilde{W}_n^k-\bar{W}_{D_{k-1}}^{k-1}|+ |W_n^k-\bar{W}_{D_{k-1}}^{k-1}|\cdot 1(Q_{k-1}\geq D_{k-1}).$$
$$|W_n^k-\bar{W}_{D_{k-1}+n}^{k-1}|\leq |\tilde{W}_n^k-\bar{W}_{D_{k-1}+n}^{k-1}|+ |W_n^k-\bar{W}_{D_{k-1}+n}^{k-1}|\cdot 1(Q_{k-1}\geq D_{k-1}).$$
For the second term, by Lemma \ref{lmm: uniform bound}, we have, for $k\geq 2$,
%\begin{align*}
% \EE[|W_n^k-\bar{W}_{D_{k-1}}^{k-1}|\cdot 1(Q_{k-1}\geq D_{k-1})]&\leq \EE[(W_n^k+\bar{W}_{D_{k-1}}^{k-1})^2]^{1/2}\PP(Q_{k-1}\geq D_{k-1})^{1/2}\\
% &\leq (2\EE[(W_n^k)^2]+2\EE[(\bar{W}_{D_{k-1}}^{k-1})^2])^{1/2}\left(\exp(-\eta D_{k-1})\EE[\exp(\eta Q_{k-1}]\right)^{1/2}\\
% &\leq 2M\sqrt{M}(k-1)^{-2}\leq 8M\sqrt{M}k^{-2}.
%\end{align*}
\begin{align*}
	\EE[|W_n^k-\bar{W}_{D_{k-1}+n}^{k-1}|\cdot 1(Q_{k-1}\geq D_{k-1})]&\leq \EE[(W_n^k-\bar{W}_{D_{k-1}+n}^{k-1})^2]^{1/2}\PP(Q_{k-1}\geq D_{k-1})^{1/2}\\
	&\leq (2\EE[(W_n^k)^2]+2\EE[(\bar{W}_{D_{k-1}+n}^{k-1})^2])^{1/2}\left(\exp(-\eta D_{k-1})\EE[\exp(\eta Q_{k-1}]\right)^{1/2}\\
	& \leq 2M(k-1)^{-3}\leq 16Mk^{-3} 
\end{align*}
For the first term, by definition, $\bar{W}_{D_{k-1}+n}^{k-1}$ is a waiting time sequence with service and arrival rates $(\mu_{k-1},\lambda(p_{k-1}))$ and $\tilde{W}_n^k$ is a sequence with rates $(\mu_{k},\lambda(p_k))$ or $(\mu_{k},\lambda(p_{k-1}))$.
As a consequence, by applying Lemma \ref{lmm: Lipschitz}, we have
\begin{align*}
	|\tilde{W}_n^k-\bar{W}_{D_{k-1}+n}^{k-1}|\leq& |\tilde{W}_0^k-\bar{W}_{D_{k-1}}^{k-1}|+\left(\frac{|\mu_k-\mu_{k-1}|}{\underline{\mu}}+\frac{|\lambda(p_k)-\lambda(p_{k-1})|}{\lambda(\bar{p})}\right)\max(\tilde{X}_n^k,\bar{X}_{D_{k-1}+n}^{k-1}) \\
	&+\frac{|\mu_k-\mu_{k-1}|}{\underline{\mu}}\max(\tilde{W}_n^k,\bar{W}_{D_{k-1}+n}^{k-1}).
\end{align*}
By Lemma 1, we have that $\max(\tilde{X}_n^k,\bar{X}_{D_{k-1}+n}^{k-1})\leq \frac{\lambda(\underline{p})}{\lambda(\bar{p})}\hat{X}_n^k$ and $\max(\tilde{W}_n^k,\bar{W}_{D_{k-1}+n}^{k-1})\leq \hat{W}_n^k$, where $\hat{X}_n^k$ and $\hat{W}_n^k$ are the observed busy period and waiting time in a stationary $GI/GI/1$ queue with rate $(\underline{\mu},\underline{p})$ as defined in Lemma \ref{lmm: uniform bound}. 
On the other hand, under Condition (b) of Theorem \ref{thm: non-stationary error},
\begin{equation*}
	\begin{aligned}
		\EE[|\mu_k-\mu_{k-1}|^2]&\leq \EE[\|x_k-x_{k+1}\|^2]\leq K_2k^{-2\alpha}\\
		\EE[|\lambda_k-\lambda_{k-1}|^2]&\leq K_2\left(\max_p\lambda'(p)\right)^2k^{-2\alpha}\equiv K_6k^{-2\alpha}. \label{eq: K6}
	\end{aligned}
\end{equation*}
Therefore,
\begin{align*}
	\EE\left[|\mu_k-\mu_{k-1}|\max(\tilde{X}_n^k,\bar{X}_{D_{k-1}+n}^{k-1}) \right] \leq \EE[(\mu_k-\mu_{k-1})^2]^{1/2}\frac{\lambda(\underline{p})}{\lambda(\bar{p})}\EE[(\hat{X}_n^k)^2]^{1/2}\leq \sqrt{K_2}\sqrt{M}k^{-\alpha};\\
	\EE[|\lambda(p_k)-\lambda(p_{k-1})|\max(\tilde{X}_n^k,\bar{X}_{D_{k-1}+n}^{k-1})]\leq \EE[(\lambda_k-\lambda_{k-1})^2]^{1/2}\frac{\lambda(\underline{p})}{\lambda(\bar{p})}\EE[(\hat{X}_n^k)^2]^{1/2}\leq \sqrt{K_6}\sqrt{M}k^{-\alpha};\\
	\EE\left[|\mu_k-\mu_{k-1}|\max(\tilde{W}_n^k,\bar{W}_{D_{k-1}+n}^{k-1}) \right] \leq \EE[(\mu_k-\mu_{k-1})^2]^{1/2}\EE[(\hat{W}_n^k)^2]^{1/2}\leq \sqrt{K_2}\sqrt{M}k^{-\alpha}.
\end{align*}
Finally, by Corollary \ref{coro: convergence to stationarity}, we have $${\EE}[|\tilde{W}_0^k-\bar{W}_{D_{k-1}}^{k-1}|]=\EE[|\bar{W}_{D_{k-1}}^{k-1}-W_0^k|]=\EE[|\bar{W}_{D_{k-1}}^{k-1}-W_{D_{k-1}}^{k-1}|]\leq (A+2M)(k-1)^{-2}\leq (4A+8M)k^{-2}.$$
In summary, we can conclude
\begin{align*}
	|\EE[W_n^k-w(\mu_k,p_k)]|&\leq \EE[|w(\mu_{k-1},p_{k-1})-w(\mu_k,p_k)|]+\EE[|W_n^k-\bar{W}_{D_{k-1}+n}^{k-1}|] \\
	&\leq B\EE[|\mu_k-\mu_{k-1}|+|\lambda(p_k)-\lambda(p_{k-1})|]+\left(\frac{2\sqrt{K_2}}{\underline{\mu}}+\frac{\sqrt{K_6}}{\lambda(\bar{p})}\right)\sqrt{M}k^{-\alpha}+O(k^{-2})\\
	&\leq B(\sqrt{K_2}+\sqrt{K_6})k^{-\alpha}+\left(\frac{2\sqrt{K_2}}{\underline{\mu}}+\frac{\sqrt{K_6}}{\lambda(\bar{p})}\right)\sqrt{M}k^{-\alpha}+O(k^{-2})\\
	&=O(k^{-\alpha}),
\end{align*}
where the second inequality follows from Lemma \ref{lmm: steady state continuity}.
As a direct consequence, 
$|I_2|=O(k^{-\alpha}\log(k))$ as $\tilde{d}_k=O(\log(k))$. 

\end{proof}
\begin{proof}[Proof of Corollary \ref{coro: I3}]
Note that by Lemma \ref{lmm: uniform bound},
$$\left|h_0\EE[W_\infty(\mu_k,p_k)]+\frac{h_0}{\mu_k}-p_k\right|\leq h_0M+h_0\underline{\mu}^{-1}+\bar{p}=O(1).$$
So it suffices to show that
$$\EE[\left|D_k - \lambda(p_k)T_k \right|]=O(k^{-\alpha}),\quad \EE\left[\sum_{n=1}^{Q_k\wedge D_k}|p_k-p_n^k|\right]=O(k^{-\alpha}).$$
Given $\mu_k$ and $p_k$, $T_k$ is the time for the $D_k$-th customer to enter service. Let $F_n^k$ be the inter-service time between the $(n-1)$-th and the $n$-th customers in cycle $k$. Then, $T_k = \sum_{n=1}^{D_k} F_n^k$ and for each $n$,
$$F_n^k =
\begin{cases}
	% \frac{V_n}{\mu_k}&\text{ for } 1\leq n\leq Q_k-1;\\
	\frac{U^k_n}{\lambda^k_n}+W_{n}^k-W_{n-1}^k&\text{ for } 1\leq n\leq Q_k\\
	\frac{U^k_n}{\lambda_{k}}+W_{n}^k-W_{n-1}^k& Q_k+1\leq n\leq  D_k.
\end{cases}$$
Therefore,
\begin{equation*}
	\begin{aligned}
		T_k & = \sum_{k=1}^{D_k} F_n^k=\sum_{n=1}^{Q_k\wedge D_k}\frac{U^k_n}{\lambda_n^k} + \frac{1}{\lambda_k}\sum_{n=Q_k+1}^{D_k}U^k_n + W_{D_k}^k-W_{0}^{k}\\
		&= \frac{1}{\lambda_k}\sum_{n=1}^{D_k}U^k_n + W_{D_k}^k-W_{0}^{k} + \sum_{n=1}^{Q_k\wedge D_k} U^k_n\left(\frac{1}{\lambda_n^k}-\frac{1}{\lambda_k}\right).
	\end{aligned}
\end{equation*}
As a consequence,
\begin{equation*}
	\begin{aligned}
		|\EE\left[(D_k-\lambda_kT_k)\right]|&\leq \lambda_k|\EE[W_{D_k}^k]-\EE[W_{0}^{k}]| + \EE\left[\sum_{k=1}^{Q_k\wedge D_k} U^k_n\Big|\frac{\lambda_k}{\lambda_n^k}-1\Big|\right].
	\end{aligned}
\end{equation*}
Following Corollary \ref{coro: convergence to stationarity} and Lemma \ref{lmm: steady state continuity}, for $k\geq 2$, the first term
\begin{align*}
	|\EE[W_{D_k}^k]-\EE[W_{0}^{k}]| &\leq \EE|W_{D_k}^k-\bar{W}_{D_k}^k|+\EE|W_{D_{k-1}}^{k-1}-\bar{W}_{D_{k-1}}^{k-1}| +|\EE[\bar{W}_{D_k}^k]-\EE[\bar{W}_{D_{k-1}}^{k-1}]|\\
	&=(A+2M)\left(k^{-2} +(k-1)^{-2}\right)+ B\sqrt{K_2}k^{-\alpha}=O(k^{-\alpha}).
\end{align*}
As to the second term, by definition, the  customers 1 to $Q_k-1$ arrive to the system while customer $0$ is waiting in the system, and therefore,
$$0\leq \sum_{i=1}^{(Q_k-1)\wedge D_k}\frac{U_i^k}{\bar{\lambda}}\leq\sum_{i=1}^{(Q_k-1)\wedge D_k}\frac{U_i^k}{\lambda_i^k}\leq W_0^k %\Rightarrow \sum_{i=1}^{Q_k-1}U^k_i\leq \bar{\lambda}W_0^k
\quad\Rightarrow \quad \EE\left[\left(\sum_{i=1}^{Q_k\wedge D_k} U_i^k\right)^{2}\right]\leq \EE\left[(\bar{\lambda}W_0^k+U^k_{Q_k})^2\right]\leq 4\bar{\lambda}^{2}M.$$
%{Remark by Guiyu: Here customer $Q_k$ comes in new cycle, so we need to add an additional $U^k_{Q_k}$ to the square term. Then since inter-arrival time is bounded $\EE[(\frac{U_{Q_k}^k}{\bar{\lambda}})^2]\leq M$.}\\
Here, $\EE[(U_{Q_k}^k)^2]$ is bounded since we assume that $U$ is light-tailed (Assumption \ref{assmpt: light tail}). For the simplicity of notation, we just assume that $\EE\left[\Big(\frac{U^2_i}{\bar{\lambda}}\Big)^2\right]<M$ for the same $M$ in Lemma \ref{lmm: uniform bound}. Then,
\begin{align*}
	\EE\left[\sum_{k=1}^{Q_k\wedge D_k} U^k_n\Big|\frac{\lambda_k}{\lambda_n^k}-1\Big|\right]&\leq \EE\left[\sum_{k=1}^{Q_k\wedge D_k} U^k_n\Big|\frac{\lambda_k}{\lambda_{k-1}}-1\Big|\right] + \EE\left[\sum_{k=1}^{Q_k\wedge D_k} U^k_n\cdot\frac{\bar{\lambda}}{\underline{\lambda}}\cdot 1(Q_{k-1}\geq D_{k-1})\right]\\
	&\leq 2\bar{\lambda}\sqrt{M}\EE\left[\Big|\frac{\lambda_k}{\lambda_{k-1}}-1\Big|^2\right]^{1/2}+\frac{2\bar{\lambda}^2}{\underline{\lambda}}\sqrt{M}\PP(Q_{k-1}\geq D_{k-1})^{1/2}\\
	&\leq \frac{2\bar{\lambda}\sqrt{M}K_6^{1/2}}{\underline{\lambda}}k^{-\alpha} +  \frac{16\bar{\lambda}^2}{\underline{\lambda}}Mk^{-3}  = O(k^{-\alpha}).
\end{align*}
Finally, 
\begin{equation*}
	\begin{aligned}
		\EE\left[\sum_{n=1}^{Q_k\wedge D_k}|p_k-p_n^k|\right]&\leq \EE\left[\sum_{n=1}^{Q_k\wedge D_k}|p_k-p_n^k|\cdot 1(Q_{k-1}<D_{k-1})\right]\\
		&~~+\EE\left[\sum_{n=1}^{Q_k\wedge D_k}|p_k-p_n^k|\cdot 1(Q_{k-1}\geq D_{k-1})\right]\\
		&\leq \EE\left[Q_{k-1}^2\right]^{1/2}\EE\left[|p_k-p_{k-1}|^2\right]^{1/2}+\EE\left[\bar{p}^2Q_k^2\right]^{1/2}\PP(Q_{k-1}\geq D_{k-1})^{1/2}\\
		&\leq \sqrt{MK_2}k^{-\alpha}+8\bar{p}Mk^{-3}=O(k^{-\alpha})
	\end{aligned}
\end{equation*}
Therefore, $I_3 = O(k^{-\alpha})$. 
\end{proof}
\begin{proof}[\textbf{Finishing the proof of Theorem \ref{thm: non-stationary error}}]
First, by Corollary \ref{coro: convergence to stationarity}, we have
\begin{equation*}
	\begin{aligned}
		|I_1|\leq \frac{A}{1-e^{-\gamma}}k^{-1} + 2MK_2k^{-\alpha}=O(k^{-\alpha}). 
	\end{aligned}
\end{equation*}
By Corollary \ref{coro: error of leftovers},
\begin{align*}
	|I_2|\leq \frac{5}{\min(\gamma,\eta)}\left(B(\sqrt{K_2}+\sqrt{K_6})+\left(\frac{2\sqrt{K_2}}{\underline{\mu}}+\frac{\sqrt{K_6}}{\underline{\lambda}}\right)\sqrt{M}+(4A+8M)\right)k^{-\alpha}\log(k)=O(k^{-\alpha}\log(k)).
\end{align*}
Following the proof of Corollary \ref{coro: I3}, we have
\begin{align*}
	I_3&\leq (h_0M+h_0\underline{\mu}^{-1}+\bar{p})\left(\frac{2\bar{\lambda}\sqrt{M}K_6^{1/2}}{\underline{\lambda}}+ \frac{16\bar{\lambda}^2}{\underline{\lambda}}M\right)k^{-\alpha} +(\sqrt{MK_2}+8\bar{p}M)k^{-\alpha}=O(k^{-\alpha}).
\end{align*}
Therefore, we can conclude that  $\forall k\geq2$, $R_{1,k}\leq K'\cdot k^{-\alpha}\log(k)$ with
\begin{align}\label{eq: K}
	K' = ~&\frac{Ah_0}{1-e^{-\gamma}} + 2h_0MK_2 +\frac{5h_0}{\min(\gamma,\eta)}\left(B(\sqrt{K_2}+\sqrt{K_6})+\left(\frac{2\sqrt{K_2}}{\underline{\mu}}+\frac{\sqrt{K_6}}{\underline{\lambda}}\right)\sqrt{M}+(4A+8M)\right)\notag\\
	&+(h_0M+h_0\underline{\mu}^{-1}+\bar{p})\left(\frac{2\bar{\lambda}\sqrt{M}K_6^{1/2}}{\underline{\lambda}}+ \frac{16\bar{\lambda}^2}{\underline{\lambda}}M\right)+\sqrt{MK_2}+8\bar{p}M.
\end{align}
Let $M_0>0$ be the upper bound of the regret in the first cycle. Here the constant $M_0<\infty$ since the decision region $\mathcal{B}$ is bounded and by condition (a), $D_1\leq K_2$ is also bounded. Finally, we conclude that 
\begin{align*}
	R_1(L)&\leq M_0+K'\sum_{k=2}^L k^{-\alpha}\log(k)
	\leq K\sum_{k=1}^L k^{-\alpha}\log(k).
\end{align*}
with $K = K'+\frac{2M_0}{\log(2)}$.
\end{proof}

\subsection{Convergence Rate of Observed Busy Period}\label{subsect: busy period}
As an analogue of Lemma \ref{lmm: thm1 1}, we prove a uniform convergence rate for the observed busy period $X_n$, which will be used to bound $B_k$ and $\mathcal{V}_k$ of the gradient estimator \eqref{eq: derivative} that involves terms of $X_n^k$.
\begin{lemma}\label{lmm: convergence of busy period}
Let $X^1_n$ and $X^2_n$ be the observed busy period of the two queueing systems coupled as in Lemma \ref{lmm: thm1 1}, with $X^1_0, X^2_0\leq_{st} \frac{\lambda(\underline{p})}{\lambda(\bar{p})}\hat{X}_0$ and $W^1_0, W^2_0\leq_{st} \hat{W}_0$.
\begin{enumerate}
	\item $|X^1_n-X^2_n|\leq {\bf 1}_{\{\max(\Gamma_1, \Gamma_2)>n\}}\left(\sum_{k=1}^n\tau_k+X^1_0+X^2_0\right)$.
	%\item $P(\Gamma_i > t) \leq  e^{-\gamma t} \exp(\eta W^i(0)).$
	\item There exists a constant $K_4>0$ such that $|\EE[X^1_n-X^2_n]|^m\leq K_4e^{-0.5\gamma n}n^{2}$ for all $n\geq 1$ and $m\leq 2$.
	%\item $|E[\sum_{k=1}^\infty X^1(t_k)-X^2(t_k)]|\leq\frac{\left(e^{\theta W^1(0)}+e^{\theta W^2(0)}\right)}{1-e^{-\gamma/2}}.$
\end{enumerate}
\end{lemma}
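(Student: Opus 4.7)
The plan is to prove the pathwise bound in part 1 by a coupling-and-regeneration argument, and then derive the moment bound in part 2 by taking expectations and applying Cauchy--Schwarz together with the tail estimate on $\Gamma_i$ already developed in the proof of Lemma \ref{lmm: thm1 1}.

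For part 1, I would order $W^1_0 \geq W^2_0$ without loss of generality. By monotonicity of the Lindley recursion in its initial state, $W^1_n \geq W^2_n \geq 0$ for all $n$, so $\Gamma_2 \leq \Gamma_1$ and $\max(\Gamma_1,\Gamma_2)=\Gamma_1$. At time $n=\Gamma_1$, $W^1_n=0$ forces $W^2_n=0$ as well, and the recursion \eqref{eq:RecBusyPeriod} then yields $X^1_{\Gamma_1}=X^2_{\Gamma_1}=0$. For $n\geq \Gamma_1$, the synchronous coupling gives $W^1_n=W^2_n$, so the indicators $\mathbf{1}_{\{W^i_n>0\}}$ coincide and the increment $\tau_n$ is shared; starting from the common state $0$ and driven by the same data, the two busy-period recursions remain identical, i.e.\ $X^1_n=X^2_n$ for all $n\geq \max(\Gamma_1,\Gamma_2)$. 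On the complementary event $\{\max(\Gamma_1,\Gamma_2)>n\}$, dropping the indicator in \eqref{eq:RecBusyPeriod} and iterating yields the crude bound $X^i_n\leq X^i_0+\sum_{k=1}^n\tau_k$, whence $|X^1_n-X^2_n|\leq X^1_0+X^2_0+\sum_{k=1}^n\tau_k$. Combining the two cases gives the indicator bound claimed in part 1.

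For part 2, I would take expectations of the bound in part 1 and apply Cauchy--Schwarz to split into a tail-probability factor and a second-moment factor. The tail factor is controlled by the estimate $\PP(\Gamma_i>n)\leq e^{-\gamma n}(1+\EE[\exp(\theta W^i_0)])$ derived in the proof of Lemma \ref{lmm: thm1 1}, combined with Assumption \ref{assmpt: light tail} and the uniform exponential moment bound $\EE[\exp(\eta \hat W_0)]\leq M$ from Lemma \ref{lmm: uniform bound}; this yields $\PP(\max(\Gamma_1,\Gamma_2)>n)\leq 2(1+M)e^{-\gamma n}$. The second-moment factor $\EE[(X^1_0+X^2_0+\sum_{k=1}^n\tau_k)^2]$ is $O(n^2)$ because $\EE[(\sum_{k=1}^n\tau_k)^2]=(n/\lambda(p))^2+n\,\mathrm{Var}(\tau)$ is quadratic in $n$ uniformly in $p\in[\underline p,\bar p]$, while $\EE[(X^i_0)^2]\leq M$. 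Putting the two factors together gives $|\EE[X^1_n-X^2_n]|\leq C\,e^{-\gamma n/2}\,n$ for an explicit constant $C$ depending only on $M$, $\lambda(\bar p)$, $\lambda(\underline p)$ and $\mathrm{Var}(U)$. Raising to the $m$-th power for $m\leq 2$ and using $e^{-m\gamma n/2}\leq e^{-\gamma n/2}$ (since $m\geq 1$) together with $n^m\leq n^4$ for $n\geq 1$ delivers the stated bound with $K_4=\max(C,C^2)$.

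The only mildly delicate point is verifying that the two busy-period processes truly coalesce at time $\max(\Gamma_1,\Gamma_2)$, because the recursion for $X_n$ is not by itself monotone in the initial state---its evolution depends on the sign of $W_n$, not on its magnitude. The observation that resolves this is that monotonicity of Lindley permits the WLOG ordering $W^1_0\geq W^2_0$, and this ordering forces $\Gamma_2\leq\Gamma_1$ with $W^2_{\Gamma_1}=0$, so that \emph{both} age-of-busy-time processes are reset to $0$ simultaneously at $\Gamma_1$, after which the shared driving data makes them evolve identically. All remaining ingredients are direct consequences of previously established tail and moment bounds.
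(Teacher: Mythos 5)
Your proposal is correct and follows essentially the same route as the paper's proof: pathwise coalescence of the two busy-period processes at $\max(\Gamma_1,\Gamma_2)$ (both reset to zero simultaneously because the Lindley monotonicity forces $W^2_{\Gamma_1}=0$), the crude bound $X^i_n\leq X^i_0+\sum_{k=1}^n\tau_k$ on the pre-coalescence event, and then Cauchy--Schwarz splitting into the exponential tail bound $\PP(\max(\Gamma_1,\Gamma_2)>n)\leq (2+2M)e^{-\gamma n}$ from Lemma \ref{lmm: thm1 1} and a polynomial-in-$n$ moment bound. The only cosmetic difference is that you bound $|\EE[X^1_n-X^2_n]|$ first and then raise to the $m$-th power, whereas the paper bounds $\EE[|X^1_n-X^2_n|^m]$ directly; the two are interchangeable here and both yield the stated $K_4e^{-0.5\gamma n}n^4$.
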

\begin{proof}[Proof of Lemma \ref{lmm: convergence of busy period}]
1. Following the argument in Lemma \ref{lmm: thm1 1}, if $W^1_0\geq W^2_0$, we will have $W^1_{\Gamma_1}=W^2_{\Gamma_1}=0$ and hence $X^1_{\Gamma_1} = X^2_{\Gamma_1}=0$. Since the two systems share the same sequence of arrivals and service times,  $X^1_n= X^2_n$ for all $n\geq \Gamma_1$. Therefore,
$$|X^1_n-X^2_n|\leq {\bf 1}_{\{\max(\Gamma_1,\Gamma_2)>n\}}|X^1_n-X^2_n|\leq  {\bf 1}_{\{\max(\Gamma_1,\Gamma_2)>n\}}\left(\sum_{k=1}^n\tau_k+X^1_0+X^2_0\right).$$
The last inequality follows from $0\leq X^i_n \leq X_0^i+\sum_{k=1}^n\tau_k$ for $i = 1, 2$.\\

%2. The same as Lemma \ref{lmm: waiting time convergence}.\\

2. Following 1 and part 2 of Lemma \ref{lmm: thm1 1}, for $m=1,2$,
\begin{equation*}
	\begin{aligned}
		\EE[|X^1_n-X^2_n|^m]&\leq \EE\left[{\bf 1}_{\{\max(\Gamma_1,\Gamma_2)>n\}}\left(\sum_{k=1}^n\tau_k+X^1_0+X^2_0\right)^m\right]\\
		&\leq \PP(\max(\Gamma_1,\Gamma_2)>n)^{1/2} \EE\left[\left(\sum_{k=1}^n\tau_k+X^1_0+X^2_0\right)^{2m}\right]^{1/2}
	\end{aligned}
\end{equation*}
where
$$\PP(\max(\Gamma_1,\Gamma_2)>n)\leq e^{-n\gamma}\EE[2+e^{\mu\theta W_0^1}+e^{\mu\theta W_0^2}]\leq e^{-n\gamma}(2+2M),$$
and
$$\EE\left[\left(\sum_{k=1}^n\tau_k+X^1_0+X^2_0\right)^{2m}\right]\leq 3^{2m-1}\left(n^{2m} \EE\left[\frac{U_1^{2m}}{\lambda(p)^{2m}}\right]+\EE[(X^1_0)^{2m}]+\EE[(X^2_0)^{2m}]\right).$$
Therefore,
$$\EE[|X^1_n-X^2_n|^m]\leq K_4 e^{-0.5n\gamma}n^{2},$$
with $K_4 = 3^m\left(\max_{1\leq m\leq 2}\EE[U_1^{2m}]/\lambda(\bar{p})^{2m} + 2\frac{\lambda(\underline{p})^{2m}}{\lambda(\bar{p})^{2m}}M\right)^{1/2}(2+2M)^{1/2}$.
%	The last inequality holds because $\sum \tau_k$ is negatively correlated with $1(\Gamma_i>n)$. Intuitively, given the same sequence of service times, when inter-arrival times are larger, the waiting time is more likely to hit zero. Therefore, for all $n\geq 1$,
%	$$ E[X^1_n-X^2_n]\leq \frac{ne^{-\gamma n}}{\lambda(\bar{p})}\left(e^{\theta W^1_0}+e^{\theta W^2_0}\right) \leq K_4 e^{-0.5\gamma n}\left(e^{\theta W^1_0}+e^{\theta W^2_0}\right),$$
%	with $K_4=\max_{n\geq 1} ne^{-0.5\gamma n}/\lambda(\bar{p})$.
\end{proof}
\subsection{Proof of Theorem \ref{thm: broadiezeevi}}\label{subsect: R1 analysis}
%begin{theorem}\textbf{$($Regret of suboptimality$)$}
%Suppose Assumptions \ref{assmpt: convexity} holds. Let $0<\beta\leq 1$. Recall that $\eta_k\to 0$ is the step-size sequence in SGD iteration. Then,  if there exists a constant $K_3\geq 1$ such that the following conditions hold for all $k$,
%\begin{enumerate}
%	\item[$(a)$] $\left(1+\frac{1}{k}\right)^\beta\leq 1 +\frac{K_0}{2}\eta_k$,
%	\item[$(b)$] $B_k \leq  \frac{K_0}{8}k^{-\beta}$,
%	\item[$(c)$]  $\eta_k\mathcal{V}_k \leq K_3k^{-\beta}$,
%	%\item The recursion \eqref{eq: BZ recursion} holds.
%\end{enumerate}
%then,  there exists a constant $C\geq 8K_3/K_0$ such that for all $k\geq 1$,
%\begin{equation}%\label{eq: convergence rate}
%\EE[\|x_k-x^*\|^2]\leq C\cdot k^{-\beta},% \text{ for all }k.%\geq \left(\frac{4K_2}{\min(1,K_0)}\right)^{1/\beta}.
%\end{equation}
%and as a consequence, %	As a consequence, under Assumptions \ref{assmpt: uniform} to \ref{assmpt: convexity}, we have
%\begin{equation}%\label{eq:R_2}
%R_2(L)\leq CK_1\sum_{k=1}^L\left( \frac{D_k}{\lambda(\bar{p})}+M\right)k^{-\beta}=O\left(\sum_{k=1}^L D_kk^{-\beta}\right).
%\end{equation}
%\end{theorem}
The proof follows an induction-based approach similar to \cite{BoradieZeevi2011}. %In detail, let $x_k = (\mu_k,p_k)$, $H_k = (0,h^1_k) $ if $U_k=1$ and $= (h^2_k,0)$ if $U=2$. We have%  be a random vector with $P(e_k=(0,1))=0.5$ and independent of $x_k$,
For simplicity of notation, we write $\Delta_k=k^{-\beta}$. 
Let $\mathcal{F}_k$ be the filtration up to cycle $k$, i.e. including all events in the first $k-1$ cycles. Since $x_{k+1}=\pi_\mathcal{B}(x_k-\eta_kH_k)$, 
\begin{align*}
&\EE\left[\|x_{k+1}-x^*\|^2]=\EE[\|x_{k}-x^*-\eta_k H_k\|^2\right]\\
=~&\EE\left[\|x_k-x^*\|^2 - 2\eta_k H_k\cdot( x_k-x^*)+ \eta_k^2H_k^2\right]\\
=~&\EE\left[\|x_k-x^*\|^2 -2\eta_k\nabla f(x_k)\cdot (x_k-x^*)\right]- \EE[2\eta_k(H_k-f(x_k))\cdot( x_k-x^*)] + \EE[\eta_k^2H_k^2]\\
=~&(1-2\eta_k K_0)\EE\left[\|x_k-x^*\|^2\right] + \EE[2\eta_k(H_k-f(x_k))\cdot( x^*-x_k)] + \eta_k^2\EE[H_k^2].
\end{align*}
Note that
\begin{align*}
&\EE[2\eta_k(H_k-\nabla f(x_k))\cdot( x^*-x_k)] =~\EE[\EE[2\eta_k(H_k-\nabla f(x_k))\cdot( x^*-x_k)|\mathcal{F}_k]] \\
=~& 2\eta_k\EE[\EE[H_k-\nabla f(x_k)|\mathcal{F}_k]\cdot( x^*-x_k)] 
\leq~  2\eta_k \EE[\|\EE[H_k-\nabla f(x_k)|\mathcal{F}_k]\|^2]^{1/2}\EE[\|x^*-x_k\|^2]^{1/2}\\
\leq~& \eta_k \EE[\|\EE[H_k-\nabla f(x_k)|\mathcal{F}_k]\|^2]^{1/2} (1+\EE[\|x_k-x^*\|^2]).
\end{align*}
The second last inequality follows from 
$ab+cd\leq \sqrt{a^2+c^2}\sqrt{b^2+d^2}$ and the Holder Inequality, the last inequality follows from $2a\leq 1+a^2$.

Let  $b_k=\EE[\|x_k-x^*\|^2]$ and recall that
$B_k = \EE[\|\EE[H_k-\nabla f(x_k)|\mathcal{F}_k]\|^2]^{1/2}, \quad \mathcal{V}_k = \EE[H_k^2]$.
Then, we obtain the recursion
%\begin{equation}\label{eq: b_k recursion}
%	b_{k+1}\leq (1-2K_0\eta_k+\eta_kB_k)b_k + \eta_kB_k+\eta_k^2{V}_k.
%\end{equation}
%we have
%\begin{equation*}
%\begin{aligned}
%&\EE[\|x_{k+1}-x^*\|^2|x_k] \leq  \EE[\|x_k -\eta_k H_k-x^* \|^2|x_k]\\
%= ~ &\|x_k-x^*\|^2 - 2\eta_k \EE[H_k|x_k]\cdot(x_k-x^*) + \eta_k^2\EE[\|H_k\|^2|x_k] \\
%=  ~&\|x_k - x^*\|^2 - 2\eta_k\nabla f(x_k)\cdot(x_k-x^*) - 2\eta_k \left(\EE[H_k|x_k]-\nabla f(x_k)\right)\cdot(x_k-x^*)+ \eta_k^2 \EE[\|H_k\|^2|x_k]\\
%\leq~ & \|x_k - x^*\|^2  - 2\eta_k K_0\|x_k-x^*\|^2 + 2\eta_k \|\EE[H_k|x_k]-\nabla f(x_k)\|\cdot\|x_k-x^*\|+ \eta_k^2 \EE[\|H_k\|^2|x_k]\\
%\leq~&  \|x_k - x^*\|^2  - 2\eta_k K_0\|x_k-x^*\|^2 + \eta_k (1+\|x_k-x^*\|^2)B_k+ \eta_k^2\mathcal{V}_k,
%\end{aligned}
%\end{equation*}
%where the last inequality follows from  $\|x_k-x^*\|\leq (1+\|x_k-x^*\|^2)/2$ and the definition of $B_k$ and $\mathcal{V}_k$.
%Based on the fact that, plugging in notation $B_k$ and $H_k$,
%\begin{equation*}
%\begin{aligned}
%E[\|x_{k+1}-x^*\|^2|x_k]
%\leq  \|x_k - x^*\|^2  - \eta_k K_0\|x_k-x^*\|^2 + \eta_k (1+\|x_k-x^*\|^2)B_k+ \eta_k^2 V_k
%\end{aligned}
%\end{equation*}
%Let $b_k = \EE[\|x_{k}-x^*\|^2]$ for all $k\geq 1$, then we have the following recursive inequality:
\begin{equation*}\label{eq: BZ recursion}
b_{k+1}\leq (1-2K_0\eta_k + \eta_kB_k)b_k +\eta_kB_k + \eta_k^2 \mathcal{V}_k.
\end{equation*}
By Condition (b) and (c), we have
$$
b_{k+1}\leq (1-2K_0\eta_k + \eta_kB_k)b_k +\eta_kB_k + \eta_k^2 \mathcal{V}_k\leq \left(1-2K_0\eta_k + \frac{K_0}{8}\eta_k\Delta_k \right)b_k +\frac{K_0}{8}\eta_k\Delta_k + K_3\eta_k\Delta_k.
$$
{Because step size $\eta_k\rightarrow0$, for $k$ large enough, $\eta_k K_0\leq 1/2$. Let $k_0=\max\{k:2\eta_kK_0>1\}$. Then, for $k>k_0$, $1-2K_0\eta_k+\frac{K_0}{8}\eta_k\Delta_k>0$.}
By Condition (a), $\Delta_{k}/\Delta_{k+1}= (1+\frac{1}{k})^\beta\leq 1+\frac{1}{k}\leq 1+\frac{K_0}{2}\eta_k$, and by the induction assumption $b_k\leq C\Delta_{k}$, {for $k>k_0$,} we have
%	$$b_{k+1}\leq (1-K_0\eta_k+K_2\eta_k\Delta_k)(1+A_k)C\Delta_{k-1}+K_2\eta_k\Delta_k + \eta_k^2K_2.$$
%	Since $\alpha\leq 0.5$ and we have chosen $\eta_k \geq 2/K_0k$, as a result, $1+A_k\leq 1+ K_0\eta_k/2$. Therefore,  we have
\begin{equation*}
\begin{aligned}
	b_{k+1}&\leq \left(1-2K_0\eta_k+\frac{K_0}{8}\eta_k\Delta_k\right)\left(1+\frac{K_0\eta_k}{2}\right)C\Delta_{k+1}+\frac{K_0}{8}\eta_k\Delta_k + K_3\eta_k\Delta_k\\
	&\leq C\Delta_{k+1} -\eta_k\Delta_k\left(\frac{3K_0C}{2}-\frac{K_0C}{8}\Delta_k - \frac{K_0^2C}{16}\eta_k\Delta_k -\frac{K_0}{8}-K_3\right)\\
	%& \leq  \Delta_k -\eta_k\Delta_k\left(\frac{K_0C}{2}-\frac{K_0}{4}-\frac{K_0C}{8}\Delta_k - K_0^2C\frac{\eta_k\Delta_k}{16} \right).
\end{aligned}
\end{equation*}
Then, we have $b_{k+1}\leq  C\Delta_{k+1}$
as long as
\begin{equation}\label{eq: proof thm1}
\frac{3K_0C}{2}-\frac{K_0C}{8}\Delta_k - \frac{K_0^2C}{16}\eta_k\Delta_k -\frac{K_0}{8}-K_3 \geq 0.
\end{equation}
To check \eqref{eq: proof thm1}, note that, $\Delta_k,K_0\leq 1$ and $C\geq 8K_3/K_0$. {Besides, $\eta_k K_0\leq 1/2<1$ for $k>k_0$.} Then, for $k\geq k_0$, 
$$\frac{3K_0C}{2}-\frac{K_0C}{8}\Delta_k - \frac{K_0^2C}{16}\eta_k\Delta_k -\frac{K_0}{8}-K_3 \geq \frac{3K_0C}{2}-\frac{K_0C}{8}-\frac{K_0C}{16}-\frac{K_0C}{8}-\frac{K_0C}{8} = \frac{17K_0C}{16}>0.$$
Let
\begin{equation}\label{eq: C}
C= \max\left(k_0^{\beta}(|\bar{\mu}-\underline{\mu}|^2+|\bar{p}-\underline{p}|^2), 8K_3/K_0\right).
\end{equation}
Then we have $\|x_k-x^*\|^2\leq C\Delta_k$ for all $1\leq k\leq k_0$, and we can conclude by induction, for all $k\geq k_0$,
$$\EE[\|x_k-x^*\|^2]\leq Ck^{-\beta}.$$
By Assumption \ref{assmpt: convexity}, there exists $\theta_0\in [0,1]$ such that
$$|f(x_k)-f(x^*)|=|\nabla f(\theta_0(x^k-x^*)+x^*)^T(x_k-x^*)|\leq K_1\|x_k-x^*\|^2.$$
As a consequence,
$$R_2(L)\leq \sum_{k=1}^L \EE[T_k]K_1Ck^{-\beta}.$$ Note that $T_k$ equals to the arrival time of customer $D_k$ plus its waiting time. Therefore,
$$\EE[T_k]\leq \EE\left[\frac{D_k}{\lambda_k}\right]+\EE[W_{D_k}^k]\leq \frac{D_k}{\lambda(\bar{p})}+M =O(D_k) ,$$
and we can conclude
$$R_2(L)=O\left(\sum_{k=1}^L D_kk^{-\beta}\right).$$ \hfill $\Box$

%\subsection{Proof of Theorem \ref{thm: non-stationary error}}
%\subfile{sections/thm2}

\subsection{Proof of Theorem \ref{thm: regret direct}}\label{subsect: direct proof}
(i) For each $k$, note that $x_k\in\mathcal{F}_k$, let's denote by 
\begin{align*}
h_k^1&= -\lambda(p_k) -p_k\lambda'(p_k)+h_0\lambda'(p_k)\left[\frac{1}{ \lceil D_k(1-\xi)\rceil}\sum_{n> \xi D_k}^{D_k} \left(\EE[X_{n}^k|\mathcal{F}_k] +\EE[W_{n}^k|\mathcal{F}_k]\right) +\frac{1}{\mu}\right],\\
h_k^2&=c'(\mu_k) - h_0\frac{\lambda(p_k)}{\mu_k}\left[\frac{1}{\lceil D_k(1-\xi)\rceil}\sum_{n> \xi D_k}^{D_k} \left(\EE[X_{n}^k|\mathcal{F}_k] +\EE[W_{n}^k|\mathcal{F}_k]\right)+\frac{1}{\mu}\right].
\end{align*}
Then,
$$\|\EE[H_k-\nabla f(x_k)|\mathcal{F}_k]\|^2 = \left|h_k^1-\frac{\partial}{\partial p}f(\mu_k,p_k)\right|^2 +\left|h_k^2-\frac{\partial}{\partial \mu}f(\mu_k,p_k)\right|^2.$$
Following \eqref{eq: derivative},
\begin{align*}
|h_k^1-\frac{\partial}{\partial p}f(\mu_k,p_k)|^2&\leq \frac{h_0^2\lambda'(p_k)^2}{\lceil D_k(1-\xi)\rceil}\sum_{n> \xi D_k}^{D_k}\left(|\EE[X_n^k - x_k|\mathcal{F}_k]| + |\EE[W_n^k-w_k|\mathcal{F}_k]|\right)^2,\\
|h_k^2-\frac{\partial}{\partial \mu}f(\mu_k,p_k)|^2&\leq \frac{h_0^2\lambda(p_k)^2}{\mu_k^2\lceil D_k(1-\xi)\rceil}\sum_{n> \xi D_k}^{D_k}\left(|\EE[X_n^k - x_k|\mathcal{F}_k]| + |\EE[W_n^k-w_k|\mathcal{F}_k]|\right)^2,
\end{align*}
where $w_k=\EE[W_\infty(\mu_k,p_k)]$ and $x_k=\EE[X_\infty(\mu_k,p_k)]$.
Note that $\lambda(p)$, $\lambda'(p)$ and $\mu$ are bounded. Let $C_0=\max_{(\mu,p)\in \mathcal{B}}\{h_0\lambda'(p_k),h_0\lambda(p)/\mu\}$, then
\begin{align*}
\|\EE[H_k-\nabla f(x_k)|\mathcal{F}_k]\|^2& \leq \frac{2C_0^2}{\lceil D_k(1-\xi)\rceil}\sum_{n> \xi D_k}^{D_k}\left(|\EE[X_n^k - x_k|\mathcal{F}_k]| + |\EE[W_n^k-w_k|\mathcal{F}_k]|\right)^2\\
&\leq \frac{4C_0^2}{\lceil D_k(1-\xi)\rceil}\sum_{n> \xi D_k}^{D_k}\left(|\EE[X_n^k - x_k|\mathcal{F}_k]|^2 + |\EE[W_n^k-w_k|\mathcal{F}_k]|^2\right)  \\
&= \frac{4C_0^2}{\lceil D_k(1-\xi)\rceil}\sum_{n> \xi D_k}^{D_k}\left(|\EE[X_n^k - \bar{X}_k^n|\mathcal{F}_k]|^2 + |\EE[W_n^k-\bar{W}_k^n|\mathcal{F}_k]|^2\right)  
\end{align*}
where the last equality follows from $\EE[\bar{W}_k^n|\mathcal{F}_k]=w_k$ and $\EE[\bar{X}_k^n|\mathcal{F}_k]=x_k$ and $\bar{W}_k^n$ and $\bar{X}_k^n$ are stationary versions of the waiting times and observed busy periods that are synchronously coupled with $W_k^n$ and $X_k^n$ respectively. Therefore, the bias
\begin{align*}
B_k^2 &= \EE[ \|\EE[H_k-\nabla f(x_k)|\mathcal{F}_k]\|^2]\\
&\leq  \EE\left[\frac{4C_0^2}{\lceil D_k(1-\xi)\rceil}\sum_{n> \xi D_k}^{D_k}\left(|\EE[X_n^k - \bar{X}_k^n|\mathcal{F}_k]|^2 + |\EE[X_n^k-\bar{X}_k^n|\mathcal{F}_k]|^2\right) \right] \\
& \leq \frac{4C_0^2}{\lceil D_k(1-\xi)\rceil}\sum_{n> \xi D_k}^{D_k}\left(|\EE[(X_n^k - \bar{X}_k^n)^2] + \EE[(W_n^k-\bar{W}_k^n)^2]\right)
\end{align*}

Following a similar argument as in the proof of Corollary \ref{coro: convergence to stationarity}, we have, for $n\geq \lceil 0.5\xi D_k\rceil$,
%\begin{equation*}
%\begin{aligned}
%&|\EE[W_n^k-w_k|\mathcal{F}_k]|^2=|\EE[W_n^k-\bar{W}_n^k|\mathcal{F}_k]|^2\leq \EE[|W_n^k-\bar{W}_n^k|^2|\mathcal{F}_k]\\
%\leq~ & e^{-\gamma(n-d_k)}\EE[(2+e^{\bar{\mu}\theta W_{d_k}^k}+e^{\bar{\mu}\theta \bar{W}_{d_k}^k})|W_{d_k}^k-\bar{W}_{d_k}^k|^2|\mathcal{F}_k]
%+ \EE[|W_n^k-w_k|^2|\mathcal{F}_k]\c%dot 1(Q_k\geq d_k).
%\end{aligned}
%\end{equation*}
\begin{equation*}
\begin{aligned}
	\EE[(W_n^k-\bar{W}_k^n)^2]&\leq \EE[(W_n^k-\bar{W}_k^n)^2\cdot 1(Q_k<0.5\xi D_k)]
	+ \EE[(W_n^k-\bar{W}_k^n)^2\cdot 1(Q_k\geq 0.5\xi D_k)] \\
	&\leq A\exp(-\gamma\cdot (n-0.5\xi D_k))+2M\exp(-\eta\cdot 0.25\xi D_k))\\
	&\leq (A +2M)\exp(-\min(\gamma,\eta)\cdot 0.25\xi D_k).
\end{aligned}
\end{equation*}
For the observed busy period $X_n^k$, following a similar analysis and Lemma \ref{lmm: convergence of busy period}, we have
\begin{equation*}
\begin{aligned}
	&\EE[(X_n^k-\bar{X}_n^k)^2]\\
	\leq~ & K_4e^{-0.5\gamma\xi D_k} D_k^{2}+(2\EE[(X_n^k)^4]+2\EE[(\bar{X}_k^n)^4])^{1/2}\PP(Q_k\geq 0.5\xi D_k)^{1/2}\\
	\leq~ & \exp(-\min(\gamma,\eta)\cdot 0.25\xi D_k)(2M +K_4D_k^{2})\leq \exp(-\min(\gamma,\eta)\cdot 0.125\xi D_k)(2M +K_4K_5),%\leq \frac{K_0}{16C_0k}.
\end{aligned}
\end{equation*}
where 
\begin{equation}K_5 = \max_{D>0}\exp(-\min(\gamma,\eta)\cdot 0.125\xi D) D^{2}=\left(\frac{16}{\min(\gamma,\eta)\cdot \xi}\right)^2e^{-2}.
\label{eq: K5}
\end{equation}
If we choose
\begin{equation*}
D_k = a_D + b_D\log(k), \text{ for } a_D\geq \frac{C_D}{\min(\gamma,\eta)\xi} \text{ and } b_D\geq \frac{8}{\min(\gamma,\eta)\xi},%\frac{C_D+8\log(k)}{\min(\gamma,\eta)\cdot \xi},
\end{equation*}
with 
\begin{equation}\label{eq: Dk direct}
C_D=\max(8(\log((16A+32M)C_0/K_0),16\log((32M+16K_4K_5)C_0/K_0)),
\end{equation}
%\begin{equation}\label{eq: Dk direct}
%D_k = \frac{\max(4(\log((16A+32M)C_0/K_0),8\log((32M+16K_4K_5)C_0/K_0))+8\log(k)}{\min(\gamma,\theta)\cdot \xi},
%\end{equation}
then
$$\EE[(W_{n}^k-\bar{W}_n^k)^2]\leq \frac{K_0^2}{256C_0^2k^2}, \quad \EE[(X_n^k-\bar{X}_n^k)^2]\leq \frac{K_0^2}{256C_0k^2}.
$$
As a consequence,
$$\EE[\|\EE[H_k-\nabla f(x_k)|\mathcal{F}_k]\|^2] \leq \frac{4C_0^2}{\lceil D_k(1-\xi)\rceil}\sum_{n> \xi D_k}^{D_k}\left(|\EE[(X_n^k - \bar{X}_k^n)^2] + \EE[(W_n^k-\bar{W}_k^n)^2]\right)\leq \frac{K_0^2}{64k^2}.$$
Therefore, we can conclude that  $$B_k =\EE[\|\EE[H_k-\nabla f(x_k)|\mathcal{F}_k]\|^2]^{1/2}\leq \frac{K_0}{8k}.$$

On the other hand, as $\lambda(p)$, $\lambda'(p)$ and $\mu$ are bounded, $C_1\triangleq \max_{\mu,p\in\mathcal{B}}\{|\lambda(p)+p\lambda'(p)|,|c'(\mu)|\}<\infty$. Recall that $C_0=\max_{(\mu,p)\in \mathcal{B}}\{h_0\lambda'(p_k),h_0\lambda(p)/\mu\}$. Then,
$$	\EE[\|H_k\|^2]\leq 8(C_1+C_0/\underline{\mu})^2 + 8C_0^2\EE\left[ \frac{1}{\lceil (1-\xi)D_k\rceil^2} \left( \sum_{n> \xi D_k}^{D_k} \left(X_{n}^k +W_{n}^k \right)\right)^2\right].$$
By Lemma \ref{lmm: uniform bound}, we have
\begin{align*}
	&\EE\left[ \frac{1}{\lceil (1-\xi)D_k\rceil^2}\left( \sum_{n> \xi D_k}^{D_k} \left(X_{n}^k +W_{n}^k \right)\right)^2\right]
	\leq~\EE\left[\frac{1}{\lceil (1-\xi)D_k\rceil^2} \left( \sum_{n> \xi D_k}^{D_k} \left(\frac{\lambda(\underline{p})}{\lambda(\bar{p})}\hat{X}_{n}^k +\hat{W}_{n}^k \right)\right)^2\right],
\end{align*}
where $\hat{W}_n^k$ and $\hat{X}_n^k$ are defined as in Lemma \ref{lmm: uniform bound}. Note that by definition,  $\hat{W}_n^k$ and $\hat{X}^k_n$ are stationary, we have
\begin{equation*}
	\begin{aligned}
		&\EE\left[\frac{1}{\lceil (1-\xi)D_k\rceil^2} \left( \sum_{n> \xi D_k}^{D_k} \left(\frac{\lambda(\underline{p})}{\lambda(\bar{p})}\hat{X}_{n}^k +\hat{W}_{n}^k \right)\right)^2\right]\\
		\leq &\frac{2}{\lceil (1-\xi)D_k\rceil^2}\EE\left[ \left( \sum_{n> \xi D_k}^{D_k}\frac{\lambda(\underline{p})}{\lambda(\bar{p})}\hat{X}_{n}^k\right)^2\right]+\frac{2}{\lceil(1-\xi)D_k\rceil^2}\EE\left[ \left(\sum_{n> \xi D_k}^{D_k}\hat{W}_{n}^k \right)^2\right]\\
		\leq &2(1-\xi)^{-2}\EE\left[\left(\frac{\lambda(\underline{p})}{\lambda(\bar{p})}\hat{X}_{0}^k\right)^2\right] + 2(1-\xi)^{-2}\EE[(\hat{W}_0^k)^2]\leq 4(1-\xi)^{-2}M.
	\end{aligned}
\end{equation*}

Therefore, $\mathcal{V}_k$ is uniformly bounded. Given that $\eta_k = c_\eta k^{-1}$, we have
$\eta_k\mathcal{V}_k\leq \frac{K_3}{k}$ with
\begin{equation}\label{eq: K_3}
	K_3=(8(C_1+C_0/\underline{\mu})^2+32C_0^2(1-\xi)^{-2}M)c_\eta .
\end{equation}

\noindent (ii) 	According to the update rule, we immediately got
$$\EE[\|x_k-x_{k+1}\|^{2}] \leq \eta^{2}_k\EE[\|H_k\|^{2}]\leq 2k^{-2}K_3/K_0\equiv K_2k^{-2}, \text{ with } K_2=2K_3/K_0.$$
%\red{Following a similar analysis as that for %$\EE[\|H_k\|^2]$, we have
	%$$	\EE[\|H_k\|^4]\leq 128(C_1+C_0/\underline{\mu})^4+ 128C_0^4\EE\left[ \frac{1}{(1-\xi)^4D_k^4} \left( \sum_{n\geq \xi D_k}^{D_k} \left(X_{n}^k +W_{n}^k \right)\right)^4\right],$$ 
	%where
	%\begin{equation*}
	%\begin{aligned}
	%\EE\left[ \frac{1}{(1-\xi)^4D_k^4}\left( \sum_{n\geq \xi D_k}^{D_k} \left(X_{n}^k +W_{n}^k \right)\right)^4\right]&\leq 8(1-\xi)^{-4}\EE\left[\left(\frac{\lambda(\underline{p})}{\lambda(\bar{p})}\hat{X}_{0}^k\right)^4\right] + 8(1-\xi)^{-4}\EE[(\hat{W}_0^k)^4]\\
	%&\leq 16(1-\xi)^{-4}M.
	%\end{aligned}
	%\end{equation*}
	%As a consequence, we have %$\EE[\|x_k-x_{k+1}\|^{\red{4}}]\leq K_4k^{-4}$, for 
	%$$K_4= 128(C_1+C_0/\underline{\mu})^4 + 2048C_0^4(1-\xi)^{-4}M.$$}
%The last inequality follows that $\EE[\|H_k\|^2]\leq \mathcal{V}_k\leq K_3K_0$.\\

\noindent (iii) We have just proved that the conditions of Theorem \ref{thm: non-stationary error} are satisfied with $\alpha=1$. Therefore, $R_1(L)\leq K\sum_{k=1}^L k^{-1}\log(k)\leq K\log(L)^2$ with the expression of $K$ given in \eqref{eq: K}. Besides, conditions of Theorem \ref{thm: broadiezeevi} are satisfied with $\beta=1$ and $D_k = O(\log(k))$. In particular, $\Delta_k/\Delta_{k+1} = 1 +\frac{1}{k}\leq 1+\frac{K_0}{2\eta_k}$ given that $\eta_k = c_\eta k^{-1}$ with $c_\eta\geq 2/K_0$. Therefore,
$$R_2(L)\leq CK_1\sum_{k=1}^L\left(\frac{D_k}{\lambda(\bar{p})}+M\right)k^{-1}=O(\log(L)^2).$$
As a consequence, the total regret
$$R(L) = R_1(L)+R_2(L)\leq K_{alg}\log(L)^2\leq K_{alg}\log(M_L)^2, \text{with }M_L = \sum_{k=1}^L D_k.$$
The last inequality uses  $\log(L)^2\leq \log(M_L)^2$. Since $M_L=O(L\log(L))$, the relaxation from $L$ to $M_L$ will not change the order of the regret bound. In addition, we can find a closed-expression for $K_{alg}$ as
\begin{equation}\label{eq: K_alg}
	K_{alg}=K+CK_1\cdot\left(\frac{C_D+8}{\lambda(\bar{p})\min(\gamma,\eta)\xi}+M\right),
	%K_{alg}=K+C\frac{K_1}{\gamma \xi}\cdot(8 +\max(4(\log(32(A\sqrt{M}+M)C_0/K_0),8\log((32M+16K_4K_5)C_0/K_0)+M),
\end{equation}
where $K$ is defined by \eqref{eq: K}, $C$ by \eqref{eq: C} and $C_D$ by \eqref{eq: Dk direct}.	\hfill $\Box$

\subsection{Details in the Proof of Lemma \ref{lmm: derivative process}}\label{subsec: glasserman}
We first give a rigorous proof of \eqref{eq: IPA} in derivation of the partial derivation $\frac{\partial}{\partial p}\EE[W_\infty(p,\mu)]$. To better explain the proof, we adopt the notions in \cite{Glasserman_1992}. We will take derivative with respective to the parameter $\theta =r=1/\lambda(p)$. With a slight abuse of notation, we redefine $W_n(\theta)=W_n(\mu,p)$ and $\tilde{U}_n(\theta)=\frac{V_n}{\mu}-\theta U_n$ so that $\tilde{U}_n'(\theta)=-U_n$.
And then, the Lindley recursion becomes
$$W_{n+1}(\theta) = \phi(W_n(\theta), \tilde{U}_n(\theta)), \quad \text{with } \phi(w,u)=(w+u)^+.$$
Note that the function $\phi$ is increasing and convex in $w$ and $u$. In addition, the derivative process is denote as $V_n(\theta)=Z_n$. Define
$\psi_w(w,u)=\psi_u(w,u)=1(w+u>0),$ such that $$V_{n+1}(\theta)=\psi_w(W_n(\theta),\tilde{U}_n(\theta))V_n(\theta)+\psi_u(W_n(\theta),\tilde{U}_n(\theta))\tilde{U}_n'(\theta).$$ The stationary versions of the waiting time and derivative process are denoted as $\tilde{W}_0(\theta)$ and $\tilde{V}_0(\theta)$.
Then we can check Conditions (B1) to (B3) on page 377 of \cite{Glasserman_1992}:
\begin{enumerate}
	\item[(B1)] For each $\theta\in[1/\lambda(\underline{p}),1/\lambda(\bar{p})]$, the sequence
	$$\{(\tilde{U}_n(\theta),\tilde{U}_n'(\theta)), -\infty<n<\infty~\} = \Big\{\underbrace{\left(\frac{V_n}{\mu}-rU_n,-U_n\right), -\infty<n<\infty}_{\text{in our notation}}\Big\}$$
	is stationary and ergodic, as we can extend the i.i.d. sequences $V_n$ and $U_n$ to $-\infty<n\leq 0$.
	\item[(B2)] For each $\theta\in[1/\lambda(\underline{p}),1/\lambda(\bar{p})]$, the Lindley recursion has a stationary solution $\tilde{W}_0(\theta)$, which is guaranteed by Assumption \ref{assmpt: uniform}. Besides,  following Lemma \ref{lmm: thm1 1}, for any initial state $W_0(\theta)$, the transient process $W_n(\theta)$ will converge to the stationary version in finite time almost surely.
	\item[(B3)] For all $\theta\in[1/\lambda(\underline{p}),1/\lambda(\bar{p})]$, $$\PP(\psi_w(\tilde{W}_0(\theta),\tilde{U}_0(\theta))=0)=\PP\underbrace{\left(\left(W_\infty(\mu,p)+\frac{V_0}{\mu}-rU_0\right)^+=0\right)}_{\text{(in our notation)}}=\PP(W_\infty(\mu,p)=0)>0.$$ 
	According to the discussion on p.379 of \cite{Glasserman_1992}, Condition (B3) holds for $GI/GI/1$ queues under the usual stability condition that $\mu>\lambda(p)$. Below, we give a detailed verification of this condition under our model setting.\\%following the assumption that $U_n$ has infinite support.
	Recall that $\tilde{U}_0(r)=\frac{V_0}{\mu}-rU_0$ and by Assumption 1, $\EE[\tilde{U}_0(r)]<0,~\forall r\in[1/\lambda(\underline{p}),1/\lambda(\bar{p})]$. So there exists a constant $b>0$, such that $\PP(\tilde{U}_0(r)<-b)>0$ for all $r\in[1/\lambda(\underline{p}),1/\lambda(\bar{p})].$
	Let $S$ denote the support of $W_\infty(\mu,p)$ and let $A=\inf S\geq 0$. We first show by contradiction that $A=0$. %Then we prove that $\PP(W_\infty(\mu,p)=0)>0$. 
	Since $A$ is the infimum of the support,
	\begin{align*}
		\PP\left(W_\infty(\mu,p)\in[A,A+\varepsilon)\right)>0, \text{ for any }\varepsilon>0.
	\end{align*}
	Besides, if $A>0$,
	\begin{align*}
		\PP(W_\infty(\mu,p)\geq A)&= \PP\left(\left(W_\infty(\mu,p)+\tilde{U}_0(r)\right)^+\geq A\right)\\
		&=\PP\left(W_\infty(\mu,p)+\tilde{U}_0(r)\geq A\right)=1,
		%\PP\left(W_\infty\in[A,A+\frac{b}{2}],~L<-b\right)>0,
	\end{align*}
	On the other hand, we have
	\begin{align*}
		\PP\left(W_\infty(\mu,p)+\tilde{U}_0(r)< A\right)\geq\PP\left(W_\infty(\mu,p)\in\Big[A,A+\frac{b}{2}\Big),~\tilde{U}_0(r)<-b\right)>0,
	\end{align*}
	where the last inequality follows from the fact that $W_\infty(\mu,p)$ and $\tilde{U}_0(r)$ are independent in the $GI/GI/1$ queue. This is a contradiction, so we can conclude that $A=0$. Next, we show that $\PP(W_\infty(\mu,p)=0)>0$.  Following a similar derivation, we can conclude
	\begin{align*}
		\PP(W_\infty(\mu,p)=0)&=\PP\left(\left(W_\infty(\mu,p)+\tilde{U}_0(r)\right)^+=0\right)
		\geq \PP\left(W_\infty(\mu,p)\in\Big[0,\frac{b}{2}\Big),~\tilde{U}_0(r)<-b\right)>0.
	\end{align*}
	%(B3) for $\frac{\partial}{\partial \mu}\EE[W_\infty(p,\mu)]$ follows the same argument.
\end{enumerate}
In addition, we have $\EE[\tilde{W}_0(\theta)]\leq M$ and $\EE[\tilde{V}_0(\theta)]=\EE[\tilde{Z}_{\infty}]=\EE[X_\infty(\mu,p)]\leq M$ following Lemma \ref{lmm: uniform bound}.
As a consequence, we can prove \eqref{eq: IPA} using the following Corollary 5.3 in \cite{Glasserman_1992}:
\begin{lemma}[Corollary 5.3 in \cite{Glasserman_1992}]Suppose that $\phi$ is increasing and (jointly) convex, and that $W_0$ and $U_0$ are almost surely convex. Suppose (B1)-(B3) hold, $\EE[\tilde{W}_0(\theta)], \EE[\tilde{V}_0(\theta)]<\infty$ for all $\theta$ in its range. Then, $\EE[\tilde{V}_0(\theta)]=\EE[\tilde{W}_0(\theta)]'$ and 
	$$\lim_{n\to\infty}\frac{1}{n}\sum_{i=1}^n V_i(\theta)=\EE[\tilde{W}(\theta)]', a.s.$$
	almost everywhere in the range of $\theta$.\end{lemma}

The derivation of $\frac{\partial}{\partial \mu}\EE[W_\infty(p,\mu)]$ follows a similar argument with
$\tilde{U}(\theta) \equiv V_n -\theta U_n/\lambda(p)$.
%%%%%%%%%%%%%%%%%%%%%%

%\paragraph{Verification of Condition (B3) }(B3) holds if 
%	\begin{equation*}
	%		\PP\left(\psi_w(\tilde{W}_0(\theta),\tilde{U}_0(\theta))=0\right)=\PP\underbrace{\left(\left(W_\infty(\mu,p)+\frac{V_0}{\mu}-rU_0\right)^+=0\right)}_{\text{(in our notation)}}=\PP(W_\infty=0)>0.
	%	\end{equation*}

%%%%%%%%%%%%%%%%%
\section{Relaxing Theoretical Bounds of Hyperparameters}\label{sec: para_robust}
In this section, we conduct numerical experiments to investigate the robustness of GOLiQ's performance to the two main hyperparameters: (i) cycle length $D_k$, and (ii) step size $\eta_k$. 
We follow two steps:
\begin{itemize}
	\item First, we calculate the theoretical bounds of these hyperparameters according to \eqref{par:eta} and \eqref{par:D}. 
	\item Next, we test the algorithm's performance while varying these hyperparameters; we especially consider values that violate their corresponding theoretical bounds. 
\end{itemize}

\subsection{Theoretical bounds for $\eta_k$ and $D_k$}
We follow Section \ref{sec:TwoDim} by considering the $M/M/1$ example having the objective function in \eqref{eq:ObjectiveMM1} and demand function in \eqref{logisticLmd}, with $a=4.1$, size $n=10$ and $c_0=0.1$. {In order to obtain the theoretic bounds for hyper-parameters, we set the region $\mathcal{B}=[6.7,10]\times[3.7,5]$ so that $f(\mu,p)$ is strongly convex on $\mathcal{B}$. }
%The bounded parameter region is $\mathcal{B}=[3.5,5]\times[6.7,10]$. 

\paragraph{\textbf{Theoretical bound for $\eta_k$.}}
According to the conditions in Assumption \ref{assmpt: convexity}, we note that the Hessian matrix of the objective $f(\mu,p)$ has a smallest eigenvalue $0.1231$ {in the specific region $\mathcal{B}$}, which implies that $K_0=0.1231$ (and the strong convexity of the objective function {on $\mathcal{B}$}). Hence, following from \eqref{par:eta}, the theoretical lower bound for $\eta_k$ is $c_\eta\geq \tilde{c}_\eta=2/K_0=16.24$.

\paragraph{\textbf{Theoretical bound for $D_k$.}} 
To calculate the lower bounds of $a_D$ and $b_D$ specified in \eqref{par:D}, we first estimate $C$ and $(\gamma,\eta)$. We set $\xi=1$. %From Theorem \ref{thm: regret direct}, this is the lower bound for cycle length $D_k$. Following this form, we set our cycle-length parameter as $D_k=a+b\log(k)$ and we estimate the lower bound of $a$ and $b$.
%We give an estimation of the $\gamma,\theta$ and show that the magnitude of theoretical value is much larger than the magnitude of the hyper-parameter that we used. 
First, according to the expression \eqref{eq: Dk direct} and $K_0=0.1231$, we see that $C\geq8$. 
%\red{Next, we try a wide range of $(\gamma,\eta)$ and find that $\min(\gamma,\eta)\leq 0.011.$} 
Next, following \eqref{eq: gamma0}, we select $\min(\gamma,\eta)=0.011$ which gives the smallest theoretical lower bound. 

Hence, \eqref{par:D} requires that $a_D\geq \tilde{a}_D = 8/0.011=727$ and $b_D\geq \tilde{b}_D = 8/0.011=727$, which leads to a bound for the cycle length $D_k\geq 727+727\log(k)$.

\begin{figure}[H]
	\vspace{-0.1in}
	\includegraphics[width=\linewidth]{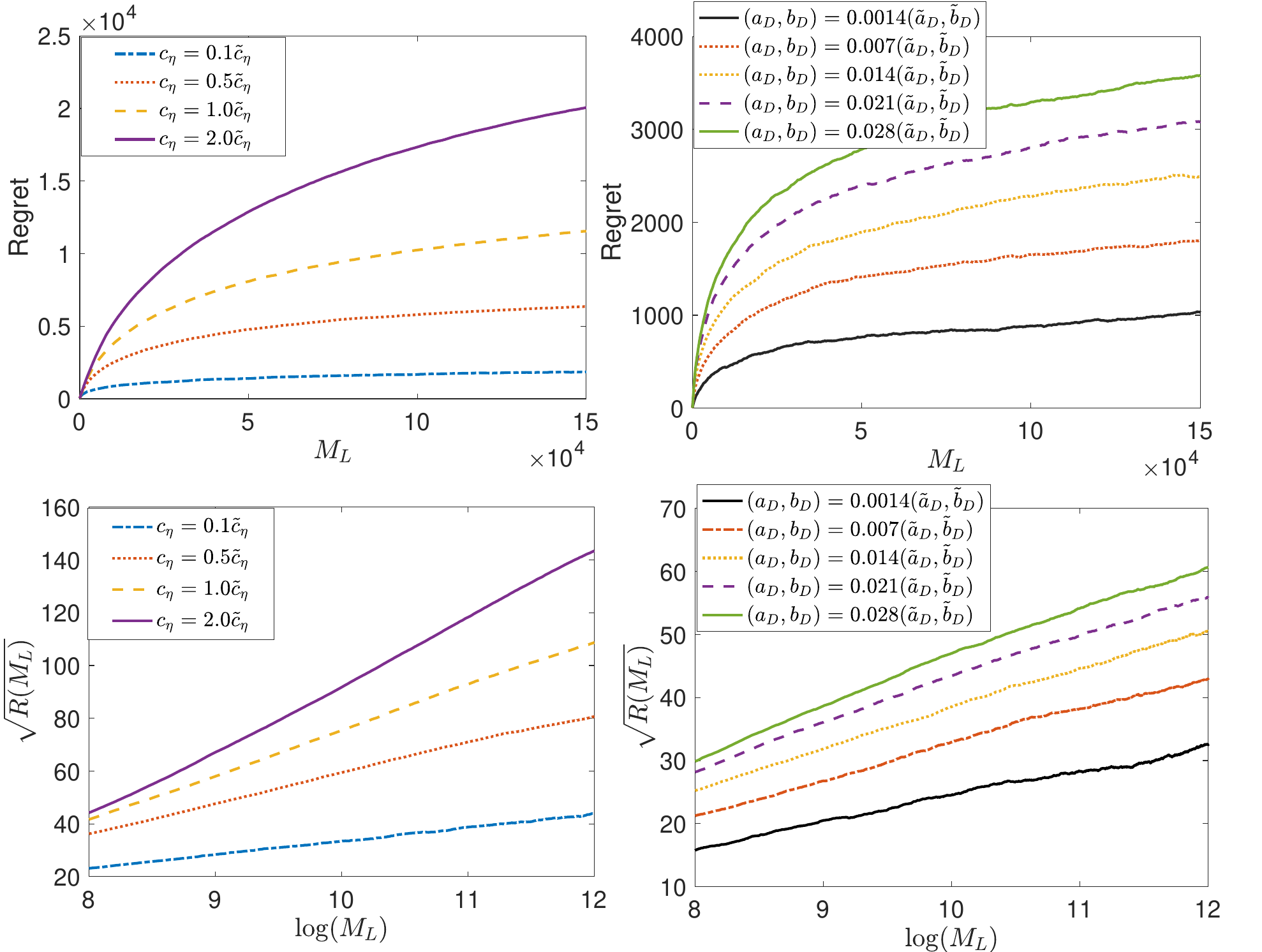}
	\caption{{Simulated regret curves with relaxed bounds on (i) step size $\eta_k$ robustness (top left panel), (ii) cycle length $D_k$ (top right panel),  and their logarithmic scales in sample size $M_L$ (two bottom panels). All regret curves are estimated by averaging 500  independent simulation runs.} }
	\label{fig:para robustness}
	\vspace{-0.2in}
\end{figure}

\subsection{Robustness to the Theoretical Bounds}
Recall that the theoretical bounds in \eqref{par:eta} and \eqref{par:D} 
require that $a_D\geq \tilde{a}_D$, $b_D\geq \tilde{b}_D$ and $c_\eta\geq \tilde{c}_\eta$. We hereby test the criticality of these lower bounds $\tilde{a}_D$, $\tilde{b}_D$ and $\tilde{c}_\eta$ by implementing GOLiQ with $(a_D,b_D,c_\eta)<(\tilde{a}_D,\tilde{b}_D,\tilde{c}_\eta)$. Specifically, in our first experiments, we consider $c_\eta=\{2,1,0.5,0.1\}\tilde{c}_\eta$ for the step-size $\eta_k$, with $D_k= 10+10\log(k)$ (see left-hand panels of Figure \ref{fig:para robustness}); in our second experiment, we consider 
$(a_D,b_D) = \{0.028,0.021,0.014,0.007,0.0014\}(\tilde{a}_D, \tilde{b}_D)$ for the sample-size $D_k$, with $\eta_k = 3/k$ (see right-hand panels of Figure \ref{fig:para robustness}). In both experiments, we plot the average regret curves estimated by 500 independent runs. 

Figure \ref{fig:para robustness} reveals that GOLiQ continues to perform effectively even when the hyperparameters are chosen to be much smaller than their corresponding theoretical lower bounds. For $\eta_k$,  our algorithm generates a logarithm regret even when $c_\eta=0.1\tilde{c}_\eta$. (However, we discover that GOLiQ will fail to converge and yield a linear regret if we keep reducing $c_\eta$ (e.g., to $0.01\tilde{c}_\eta$). 
%This is also true for many Stochastic Gradient Descent type algorithm and is inevitable. 
For $D_k$, all regret curves exhibit a logarithmic order (even when $(a_D,b_D)=0.0014(\tilde{a}_D,\tilde{b}_D)$). In summary, our numerical experiments show that the theoretical bounds for our hyperparameters do  not seem to be too restrictive. {In addition, the experiment in Section \ref{sec:AsymOpt} serves as another piece of evidence supporting the  robustness of GOLiQ. In Section \ref{sec:AsymOpt}, we apply GOLiQ with the same hyperparameters $\eta_k=5k^{-1}$ and $D_k=10+10\log(k)$ for different settings with various $c$, $c_s^2$ and $n$ (see Figure \ref{fig:Compare_Amy}), and GOLiQ exhibits stable performance with similar logarithm regrets.}
Of course, we acknowledge that the specific selection of these hyperparameters in a practical setting will require further  tuning in order to make the most efficient use of GOLiQ.
\begin{remark}[Requirement of information: online learning vs. heavy traffic]
	We provide our view on how online learning relies on the system information, and we treat heavy-traffic methods as a benchmark. 
	First, online learning in general requires less prior information of the distributions than heavy-traffic methods do. For example, to solve the problem in the present study, the diffusion limit in \cite{LeeWard2014} requires the knowledge of the exact values of the second moments of arrival and service times. On the other hand, even though the efficiency of GOLiQ is subject to constraints in terms of certain model parameters, the bounds of these constraints may be relaxed without needing to sacrifice much of the algorithm's performance.
	%the fluid limit in \cite{W06} requires full distribution of the abandonment-time distribution, and the FCLT limit in \cite{ACL18} requires the full distribution of service times.
	Second, the required information (e.g., moments) serves as crucial input parameters for the heavy-traffic models, whereas the design and implementation of online learning algorithms do not immediately require the aforementioned information (even though it is still relevant to the tuning of hyperparameters). 
	{All that we require is that the constants in \eqref{par:eta} and \eqref{par:D} are \textbf{not too small}. So as long as we follow the structure specified in \eqref{par:eta}--\eqref{par:D}, it will not be too difficult to find reasonably sound hyperparameters (e.g., by a trial-and-error search) even without precise information of parameters $\eta$ and $\gamma$ as in Assumption 2.  However, trial-and-error will be ineffective for heavy-traffic methods because precise information is needed (e.g., $\sigma^2$). In this sense, online learning depends on the system information to a lesser extent.}
	\label{rem:info}
\end{remark}
%%%%%%%%%%%%%%%%%
\section{Comparison With Online Learning Algorithm in \cite{Huh2009}} \label{sec: cmp_Tim}
In order to highlight the novelty in the regret analysis of this work, we wish to provide a comparison to other existing online learning algorithms developed for queueing systems. Unfortunately, there exists no previous algorithm that aims to solve a similar (not to mention the same) problem as in this paper.

\cite{Huh2009} develops an online learning algorithm with the objective of finding the optimal base-stock policy for an inventory system with a non-zero replenishment lead time. At a glance, \cite{Huh2009} does not seem to be relevant to the present paper at all. Indeed, results in \cite{Huh2009} are by no means directly comparable to GOLiQ, because the two articles consider two different systems. Nevertheless, the fundamental idea in the regret analysis by \cite{Huh2009} may be used as a basis to devise a queueing-version algorithm.To understand why this is possible, we first discuss the similarities of our method and that in \cite{Huh2009}; and we next explain their major distinctions.

\paragraph{\textbf{Similarities.}}
First, \cite{Huh2009} analyzes the transient regret bound of an inventory system operated under a stationary base-stock policy, of which the main framework is analogous to that in the present work.
Second, the heart of the online learning algorithm in \cite{Huh2009} is an SGD method. Last, the regret in \cite{Huh2009} is also defined using the steady-state performance as the benchmark.

\paragraph{\textbf{Distinctions.}} 
Nevertheless, we stress that GOLiQ is not a quick extension of the regret analysis in \cite{Huh2009}. 
First, a queueing model, by its very nature, has completely different dynamics, problem structure, and research questions from inventory systems. For example, the state space of the queueing model here is unbounded,  while the inventory system  in \cite{Huh2009}  is bounded. (The unbounded state space has made the analysis on transient error and gradient variance more challenging.) Next, our analysis of the ``regret of nonstationarity" is a novelty; when establishing our regret bound, we examine more delicately the transient error at the beginning of the cycles, so as to render a smaller regret bound $O(\log(L)^2)$ than the linear bound $O(L)$ as in \cite{Huh2009}. Also see Section \ref{subsec: transient analysis} for additional discussions. In detail, different from the analysis  in \cite{Huh2009}, we further separate the transient error in each cycle into two parts, i.e., the `warm-up' part and `near-stationary' part, and deal with them using different coupling techniques: coupling from the previous cycle and coupling from infinite past for the `warm-up' part and synchronous coupling in the same cycle  for the `near-stationary' part. 
Last, our novel theoretical analysis yields different ``optimal" structure for the hyperparameters $\eta_k=O(k^{-1})$ and $D_k=O(\log{(k)})$. 

{According to their regret analysis, \cite{Huh2009} propose to choose the hyperparameters 
	$\eta_k = O(k^{-1/2})$ and $D_k = O(\sqrt{k})$ which yield a regret bound in the order $O(M_L^{2/3})$. However, we point out that the objective function in \cite{Huh2009} is convex while GOLiQ in the present paper is designed assuming a strongly convex objective function (Assumption \ref{assmpt: convexity}). Therefore, to make a fair comparison between GOLiQ and the online learning algorithm proposed in \cite{Huh2009}, we need to redo the regret analysis in \cite{Huh2009} under the \textit{strong convexity}. This change, as we will show below, will yield a different set of hyperparameters.} %We next explain how regret analysis in \cite{Huh2009} may be used to devise a queueing-version algorithm. Specifically, the hyperparameters $\eta_k$ and $D_k$ in Algorithm \ref{alg: direct} can be selected based on the regret analysis in \cite{Huh2009}. %Namely, we need to choose appropriate $D_k=O(k^{\alpha}),\eta_k=O(k^{-\beta})$ so that the order of $R(M_L)$ is minimized. %For this purpose, we next calculate and then minimize the order of $R(M_L)$ following the analysis in \cite{Huh2009}.}
%Since the objective function in \cite{Huh2009} is convex, while our objective function is strongly convex (Assumption \ref{assmpt: convexity}), to facilitate a fair comparison, we modify the choice of hyperparameters in \cite[Theorem 6]{Huh2009} are follows.}

{Suppose we select $D_k=O(k^{\alpha})$ and $\eta_k=O(k^{-\beta})$. Then, following Lemma 11 of \cite{Huh2009},  $R_1(L)=O(L)$ (compared to $R_1(L)=o(L)$ in our analysis). Given that the objective function is strongly convex, Theorem \ref{thm: broadiezeevi} yields that $R_2(L)=O\left(\sum_{k=1}^{L}k^{\alpha}\cdot k^{-\beta}\right)=O\left(L^{\alpha-\beta+1}\right)$ for $\beta\in(0,1]$. As a result, the overall regret is
$$R(L)=R_1(L)+R_2(L)=O(L^{(\alpha-\beta+1)\vee 1}),\quad\text{and}\quad R(M_L)=O\left(M_L^{\frac{(\alpha-\beta+1)\vee1}{\alpha+1}}\right),$$
with $M_L=O(L^{\alpha+1})$.
Consequently, the order of $R(M_L)$ is minimized by setting
\begin{align}\label{HuhHyperPar}
\eta_k=O(k^{-1}),\quad  \text{and}\quad  D_k=O(k),
\end{align}
which yields an improved regret bound $O(M_L^{\frac{1}{2}})$ (as opposed to the previous regret $O(M_L^{\frac{2}{3}})$ under regular convexity).}

{We refer to Algorithm \ref{alg: direct} with $\eta_k$ and $D_k$ selected according to \eqref{HuhHyperPar} as GOLiQ-H.
To compare GOLiQ with GOLiQ-H, we follow the setting in Section \ref{sec:TwoDim} by considering an $M/M/1$ queue, with $c(\mu)=0.1\mu^2$ and $\lambda(p)=10\lambda_0(p).$ }

%In Figure \ref{fig:cmp_Tim}, we graph the average regret curves (estimated by averaging 500 independent paths) for both GOLiQ and GOLiQ-H. 
%The hyperparameters  are $D_k=10+10\log(k)$ and $\eta_k=3k^{-1}$ for GOLiQ, and are $\eta_k=\{0.25,0.5\}k^{-\frac{1}{2}}$ and $D_k=10+2\sqrt{k}$ for GOLiQ-H. 
{
In Figure \ref{fig:cmp_Tim}, we plot the average regret curves (estimated by averaging 500 independent paths) for both GOLiQ and GOLiQ-H. 
The hyperparameters  are $\eta_k=2k^{-1}$ and  $D_k=10+10\log(k)$  for GOLiQ, and $\eta_k=\{2,4\}k^{-1}$ and $D_k=10+k$ for GOLiQ-H. 
}
{Unsurprisingly, Figure \ref{fig:cmp_Tim} confirms that GOLiQ is more effective than GOLiQ-H. This is consistent with our theoretical analysis because GOLiQ yields a logarithmic regret while GOLi-H has a regret bound of $O(M_L^{\frac{1}{2}})$.}
%\par From the top left panel of Figure \ref{fig:cmp_Tim}, we can see that SGD-Q algorithm will have a little bit higher regret compared to SGD-Q-H algorithm in the beginning; however, within short time, SGD-Q algorithm will have lower regret and outperform SGD-Q-H algorithm in the sense of regret. This can be explained by the other plots in Figure \ref{fig:cmp_Tim}. First of all, top right panel of Figure \ref{fig:cmp_Tim} shows that the linear fit of $regret$ versus $M_L^{\frac{1}{3}}$ fits this curve well, and this implies that the SGD-Q-H algorithm can reach $O(M_L^{\frac{1}{3}})$'s regret order. Since SGD-Q algorithm has a logarithm regret, SGD-Q algorithm will outperform the SGD-Q-H algorithm ultimately due to lower order. From the sample trajectory plots (bottom panel), we can see that since SGD-Q-H algorithm have larger order of step-length $\eta_k$, it will come to optimal point earlier; however, it will have large fluctuation and which will degrade the performance of the algorithm. On the contrary, SGD-Q will be mild and once it come to the optimal point, it does not fluctuate a lot and ultimately performs better (has lower regret).
\begin{figure}[H]
\centering 
\includegraphics[width=1\linewidth]{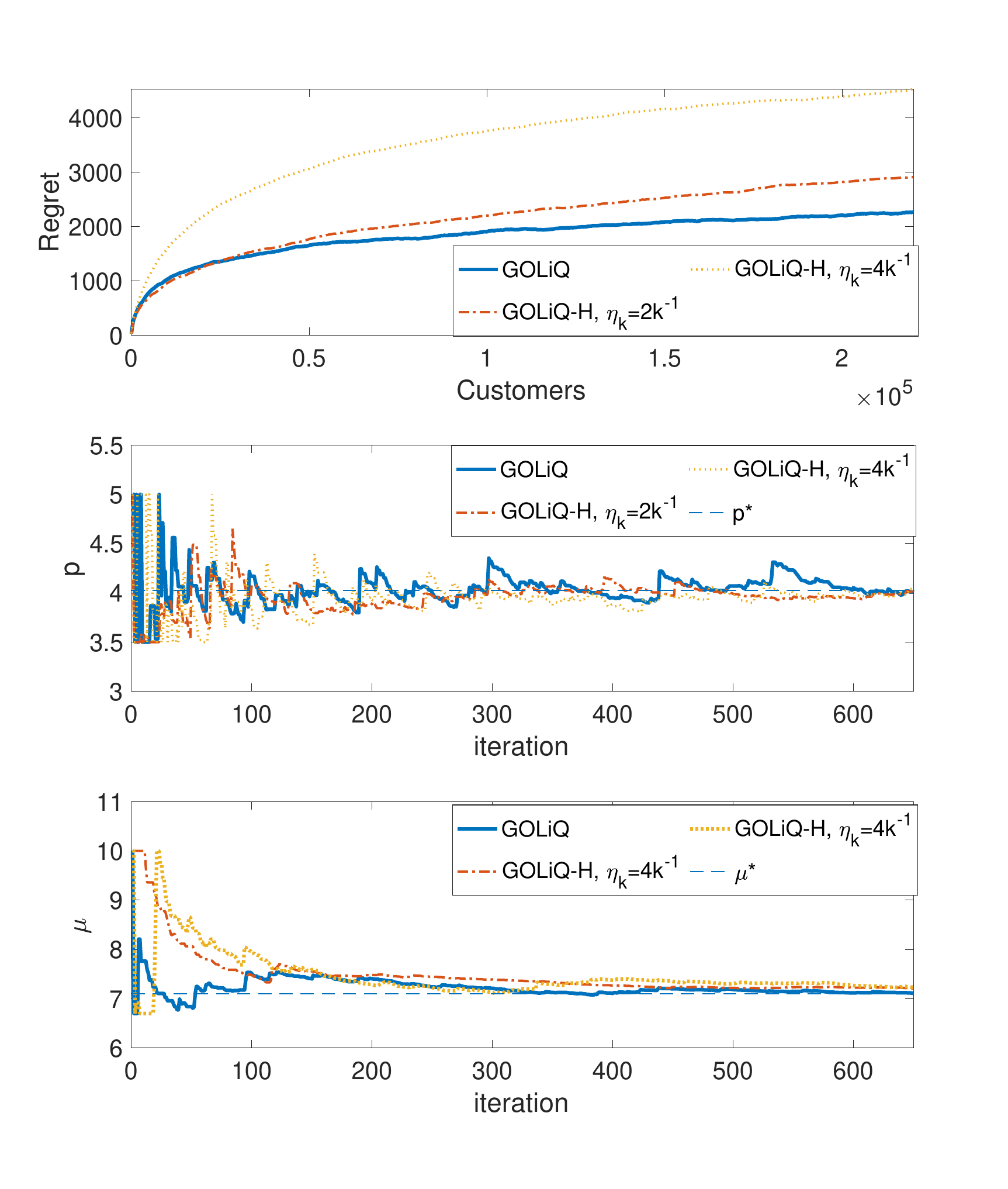}
\vspace{-0.5in}
\caption{{Comparing GOLiQ and GOLiQ-H: (i) regret curves (top panel), (ii) trajectory of price $p_k$ (middle panel), and (iii) trajectory of service rate $\mu_k$ (bottom panel). Hyperparameters are $\eta_k=\{2,4\}k^{-1}$ and $D_k=10+k$ for GOLiQ-H, and are $\eta_k=2k^{-1}$ and $D_k=10+10\log(k)$ for GOLiQ. All regret curves are estimated by averaging 500 independent simulation runs.} %The regret order of SGD-Q-H algorithm is $O(M_L^{\frac{1}{3}})$ and the order of SGD-Q is $O(\log(M_L)^2)$.
}
\label{fig:cmp_Tim}
\end{figure}

\section{Additional Numerical Examples}\label{sec: AddNum}
In this section we conduct additional numerical experiments to confirm the practical effectiveness of our algorithm. In what follows, we first test the case where the uniform stability condition is relaxed; we next report the algorithm performance for $GI/GI/1$ queueing models with phase-type and lognormal distributions.
% We would come up with a M/G/1 queueing system case first and compare different the convergence results. Then we would introduce a LN/LN/1 example. Since Log-Normal queues are widely-used to simulate the queue and our result numerically showed that our algorithm works for most of the cases. All of the cases has $c(\mu)=0.2\mu$

\subsection{Violation of Uniform Stability}\label{sec:UnifStab}
\begin{figure}[H]
\vspace{-0.1in}
\centering
\includegraphics[width=0.9\textwidth]{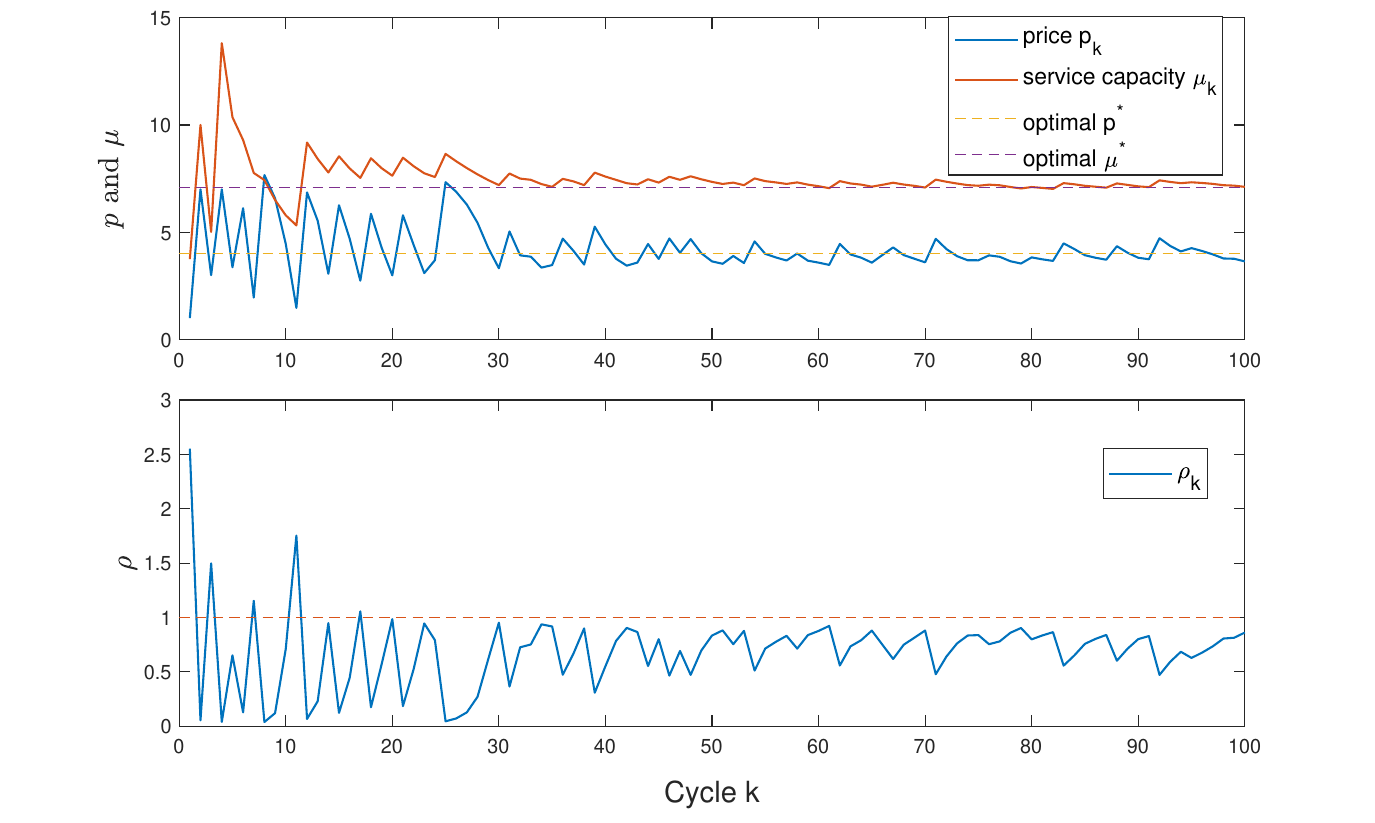}
\vspace{-0.1in}
\caption{Joint pricing and staffing for the $M/M/1$ model in Section \ref{sec:TwoDim} without uniform stability.}\label{fig:uniformstab}
%\vspace{-0.25in}	
\end{figure}
We extend the $M/M/1$ example considered in Section \ref{sec:TwoDim} with the uniform stability condition relaxed. Specifically, we begin with an initial setting of $(p_0, \mu_0)$ such that $\rho_0\equiv \lambda(p_0)/\mu_0 = 2.55$, which violates the stability condition. As shown in Figure \ref{fig:uniformstab}, the pricing and staffing policies $(p_k,\mu_k)$ remain convergent to $(p^*, \mu^*)$. Consistently, the resulting traffic intensity $\rho_k \equiv \lambda(p_k) / \mu_k$ is quickly controlled to fall below 1; that is, the workload is kept in check despite of the unstable performance in the initial cycle.

\subsection{$M/G/1$ with Phase-Type Service} \label{sec:NumMG1}
To test the performance of our online learning algorithm for queues with non-exponential service times, we consider phase-type distributions: hyperexponential with $n$ phases ($H_n$) and Erlang with $n$ phases ($E_n$). 
%Let $S$ be a generic service time, define the \textit{squared coefficient of variation} (SCV) as the variance over the squared mean, i.e., $c_s^2 \equiv \text{Var}(S)/\EE[S]^2$.
In Figure \ref{fig:2dMG1} we report the convergent sequence $(p_k,\mu_k)$ with $H_2$ service with service-time SCV $c_s^2 = 8$ (top panel), $M$ service with $c_s^2=1$ (middle panel), and $E_8$ service with $c_s^2 = 1/8$ (bottom panel). Other parameters include the step size $\eta_k=4/k$, cycle length $D_k=20+10\log(k)$ and initial condition $p_0=4$ and $\mu_0=12$ ($\lambda_0=5.249$).

%We set hyper-exponential distribution $H_2$ as $p_1=\frac{1+\sqrt{1-2/(1+S_{CV})}}{2}$ and $p_2=1-p_1$ with rate $\lambda_1=2p_1\lambda$ and $\lambda_2=2p_2\lambda$.  And we set Erlang distribution $E_k$ with parameter $k=1/S_{CV}$.
\begin{figure}[H]
\vspace{-0.1in}
\centering
\includegraphics[width=0.85\textwidth]{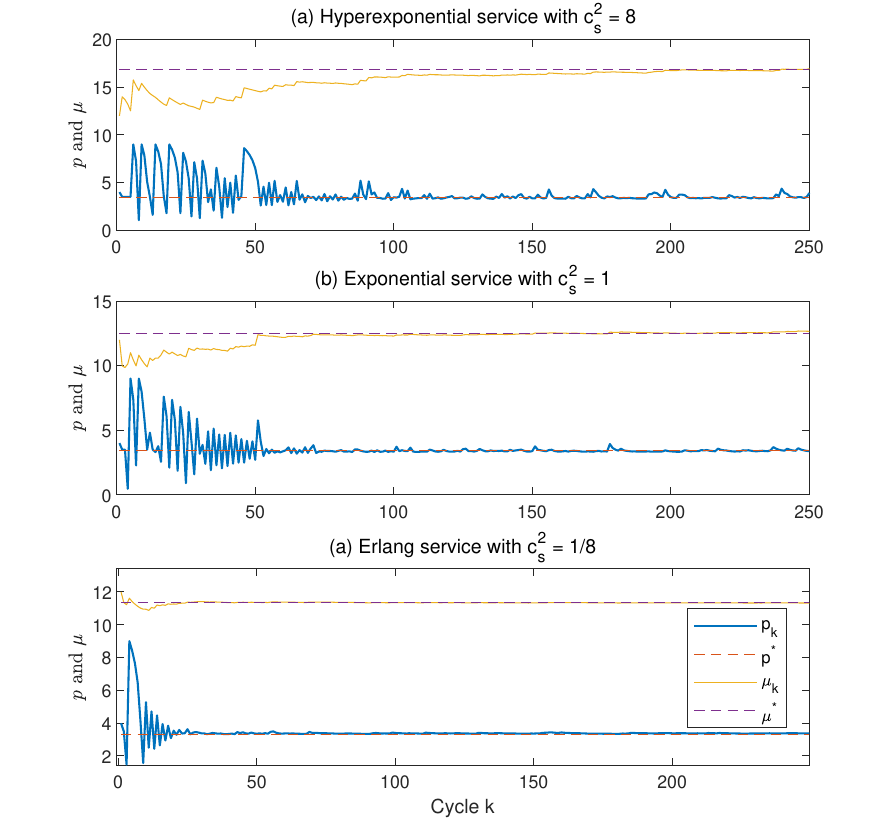}
\vspace{-0.1in}
\caption{Joint pricing and staffing for an $M/G/1$ queue having (a) $H_2$ service with $c_s^2 = 8$ (top panel), (b) $M$ service (middle panel), and (c) Erlang service with $c_s^2 = 1/8$ (bottom panel). Other parameters are step length $\eta_k=4/k$, cycle length $D_k=20+10\log(k)$, initial condition $p_0=4$, $\mu_0=12$. The optimal pricing and staffing solutions are: (i) $(p^*, \mu^*)=(3.44, 16.86)$; (ii) $(p^*, \mu^*) = 3.40, 12.48)$; (iii) $(p^*,\mu^*)=3.38, 11.34)$.}\label{fig:2dMG1}
%\vspace{-0.25in}	
\end{figure}

%\begin{figure}
%	\includegraphics[width=1.01\textwidth]{figures/figMG1path-eps-converted-to.pdf}
%	\caption{Joint pricing and staffing for $M/G/1$ trajectory models: (i)$M/H_2/1$ with $S_{CV}=8$ trajectory;(ii) $M/M/1$ with $S_{CV}=1$ trajectory;(iii)$M/E/1$ with $S_{CV}=1/8$ trajectory; the parameters for three cases are the same: step length$\eta_k=4/k$,cycle length $D_k=20+10\log(k)$,initial parameter $p_0=4,\mu_0=12$;optimal parameter for three cases:(i)$p^*=3.4422,\mu^*=16.8559$;(ii)$p^*=3.3956,\mu^*=12.476$;(iii)$p^*=3.3839,\mu^*=11.3361$}
%	\label{fig2dMG1}
%\end{figure}
Figure \ref{fig:2dMG1} confirms that our algorithm remains effective. In addition, the convergence is faster as the CSV $c_s^2$ decreases. This is intuitive because a less variable service-time distribution yields a smaller $\mathcal{V}_k$ for the gradient estimator.

%In figure \ref{fig:LNLN1}, we would see our algorithm converges rapidly, though the objective may not be completely convex. One interesting thing to note is that, from (i) to (iii), the coefficient of variance is lower and lower, while the algorithm converges faster with smaller fluctuations.

\subsection{Lognormal Service and Arrival} \label{sec:NumGG1}
%\textcolor{red}{Does Assumption 2, i.e. light-tail assumption holds for log-norm distribution? Log-normal is a heavy-tail distribution.}\\
Next, we consider an $LN/LN/1$ queue with service and interarrival times following lognormal (LN) distributions. Our consideration here follows from the recent empirical confirmations of LN distributed service times in real service systems. % (e.g., LN call durations in call centers and LN length-of=stays in hospitals).

%We have $\mu_{LN}=-log(\lambda)-0.5\log(S_{CV}+1)$ and $\sigma^2_{LN}=\log(S_{CV}+1)$, then $V=exp(X)$ where $X~\mathcal{N}(\mu_{LN},\sigma^2_{LN}).$ And we denote this distribution as $LN(S_{CV})$.
We let $c_s^2=c_a^2=2$ with $c_a^2$ being the SCV of the $LN$-distributed interarrival times. The other parameters remain the same as in Section \ref{sec:NumMG1}.
Because the exact optimal solutions $(p^*, \mu^*)$ are unavailable for this model, we are unable to provide an estimate of the regret as done in Figure \ref{fig2dMM1}, nor can we confirm the convex structure of the problem. Nevertheless, Figure \ref{fig:LNLN1} shows that our online algorithm continues to work well, despite the fact that LN is no longer a light-tail distribution (Assumption \ref{assmpt: light tail} does not hold in this case).
%Here in our case we use $LN(2)/LN(2)/1$ queue. And we also choose the same demand function as \ref{logisticLmd}. And also choose the parameters as : step length $\eta_k=4/k$, cycle length $D_k=20+10\log(k)$ with initial parameter $p_0=4,\mu_0=13.5,\lambda_0=5.249$.
%Since there is no closed form of this queue system, we cannot solve the accurate value of the optimal, and also do not have any knowledge whether it's convex. However, it gradually converge to a value close to $M/M/1$'s result and the gradient fluctuates around the x-axis with the same pattern as before. Hence we have reason to believe our algorithm works for this situation rapidly.
\begin{figure}[H]
\vspace{-0.05in}
\centering
\includegraphics[width=0.80\textwidth]{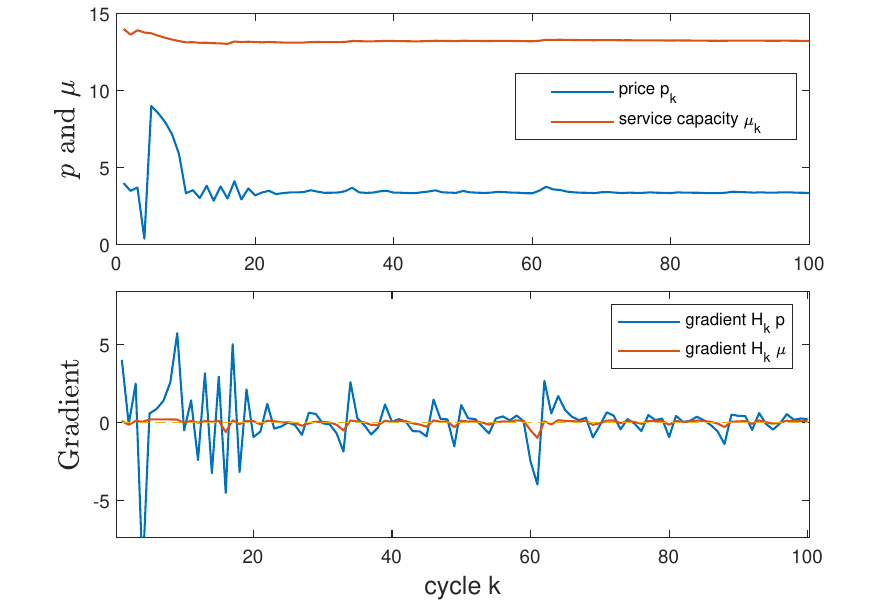}
\vspace{-0.05in}
\caption{Joint online pricing and staffing for an $LN/LN/1$ having lognormal service and interarrival times with CSVs $c_s^2 = c_a^2 = 2$. Other parameters are  $\eta_k=4/k$, $D_k=20+10\log(k)$, $p_0=4,\mu_0=14$.}\label{fig:LNLN1}
%\vspace{-0.25in}	
\end{figure}

%\begin{figure}
%	\includegraphics[width=1.01\textwidth]{}
%	\caption{Joint pricing and staffing for $LN(2)/LN(2)/1$ trajectory models:(i) the trajectory of algorithm ;(ii) the gradient plots; the parameters: step length$\eta_k=4/k$,cycle length $D_k=20+10\log(k)$,initial parameter $p_0=4,\mu_0=14$;
%	\label{fig2dMG1}

\subsection{Extended Comparison of GOLiQ and  Heavy-traffic Methods}\label{sec:AsymOptMore}
Supplementing our investigations in Section \ref{sec:AsymOpt}, we provide additional numerical results. 
%It is a common consensus that non-Markovian queues, despite their theoretical and practical importance, can be difficult to analyze, because analytic performance formulas are hardly available.
%A predominant treatment to queues with non-exponential service and interarrival times is to resort to their heavy-traffic limits, which are much more tractable. For example, one can 
Recall that the heavy-traffic results in \cite{LeeWard2014} are obtained by constructing a sequence of $GI/GI/1$ models indexed by $n$, where the $n^{\rm th}$ model has scaled arrival rate $\lambda_n(p)=n\lambda(p)$ and service rate $\mu_n=n\mu$, so that both $\lambda_n$ and $\mu_n$ grow to $\infty$ as $n$ increases.  
\cite{LeeWard2014} develop asymptotic staffing and pricing solutions for the $GI/GI/1$ queue; they show that, as the scaling factor $n\rightarrow\infty$, the optimal price $p^*_n\rightarrow p_{\infty}$ and service capacity $\mu^*_n/n\rightarrow \mu_{\infty}$, with $\rho_\infty\equiv \lambda(p_{\infty})/\mu_{\infty}=1$.

%To make contact with previous heavy-traffic results, we compare the solutions of our online optimization algorithm to the asymptotic solutions in \cite{LeeWard2014} by letting the system scale increase. 
We repeat our experiment in Section \ref{sec:TwoDim} with the scaling parameter $n \in\{10,50, 100, 500,1000,2000\}$ for the arrival rate function \eqref{logisticLmd}. But we now focus on the optimal traffic intensity as $n$ varies.
%We set also use M/M/1 que for simplicity and could also compare our algorithm with optimal. And we still set  as (\ref{logisticLmd}) and we choose appropriate step length and same $D_k=10+10\log(k)$ then run 1000 cycles to estimate the parameters  and we have ths results in Figure \ref{fig:Compare with Lee and Ward}.
In Figure \ref{fig:Compare with Lee and Ward} we plot the optimal price and service rate as $n$ increases. In each experiment, we compute the optimal $p_n$ and $\mu_n$ using their average value in cycles 300--500 of Algorithm \ref{alg: direct}. Consistent with \cite{LeeWard2014}, Figure \ref{fig:Compare with Lee and Ward} shows that $p_n$, $\mu_n/n$ and $\rho_n$ approach $p_{\infty}$, $\mu_{\infty}$ and $\rho_{\infty}=1$. On the other hand, when the scale $n$ is not very large, the heavy-traffic solutions can become inaccurate. For instance, when $n = 100$ the optimal traffic $\rho_{100}$ is around 0.8, which is not close to 1.
%\begin{table}
%	\centering
%	\label{Table:Compare with Lee and Ward}
%	\begin{tabular}{ccccccc}
%	\hline
%	n&$\hat{p}$&$\hat{\mu}/n$&$\hat{\rho}$&$p^*$&$\mu^*/n$&$\rho^*$\\
%	\hline
%	1&3.403&12.37&0.539&3.396&12.476&0.536\\
%	5&3.362&9.488&0.713&3.364&9.361&0.722\\
%	10&3.367&8.503&0.794&3.357&8.618&0.783\\
%	50&3.349&7.660&0.887&3.347&7.622&0.892\\
%	100&3.329&7.508&0.911&3.345&7.386&0.921\\
%	200&3.336&7.286&0.936&3.343&7.219&0.943\\
%	$\infty$&-&-&-&3.282&6.932&1
%	\end{tabular}

%	\caption{Comparison with Lee and Ward(2014);Lee and Ward's algorithm's solution in last row; with $D_k=10+10\log(k)$ and $\eta_k=3*k^{-1}$ and $p_0=4$,$\mu_0=8n$}
%\end{table}
\begin{figure}[H]
\centering
%\vspace{-0.1in}
\includegraphics[width=0.85\textwidth]{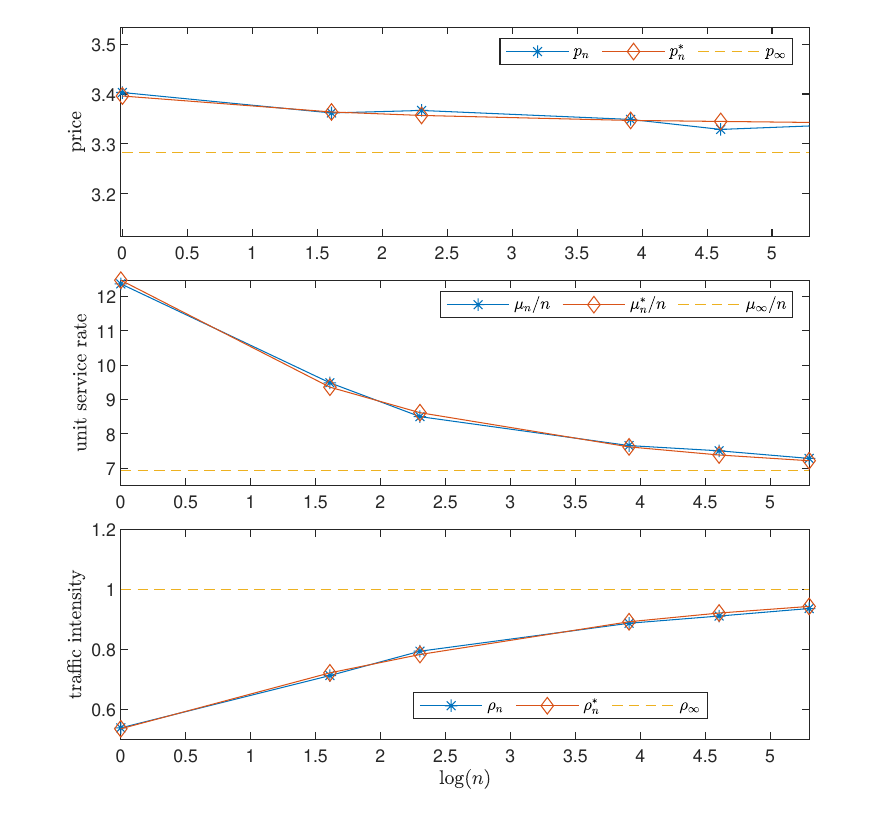}
%\vspace{-0.2in}
\caption{Comparison of (i) online optimization solutions $(p_n,\mu_n,\rho_n)$; (ii) exact solutions $(p_n^*, \mu_n^*, \rho_n^*)$; and (iii)  heavy-traffic solutions $(p_{\infty}, \mu_{\infty}, \rho_{\infty}) = (3.282, 6.932, 1)$ in \cite{LeeWard2014}, as the system scale $n= M_0$ increases, with parameters $D_k=n(10+10\log(k))$ and $\eta_k=3k^{-1}$. }\label{fig:Compare with Lee and Ward}
\end{figure}

\subsection{Alternative Definition of Regret}\label{sec:RegretAlternative}
In this subsection, we attempt to rationalize our regret definition in \eqref{regret_def}. We consider a potential alternative to \eqref{regret_def} which 
benchmarks the system revenue under GOLiQ with the nonstationary revenue under $(\mu^*, p^*)$. Because the nonstationary queue length is intractable, we conduct additional numerical experiments to estimate the expected nonstationary regret via Monte-Carlo simulations.

Specifically, we simulate the regret in \eqref{eq:regretk} under $(p^*,\mu^*)$ with the queueing system starting empty (of which the dynamics is nonstationary). 
We use the $M/M/1$ model in Section \ref{sec: num} having a logit demand function \eqref{logisticLmd} with $n=10$ and a quadratic staffing cost in \eqref{convexMu} with $c=0.1$.

In Figure \ref{fig:Reg_Cmp} we graph both versions of the regret under GOLiQ under the same experimental setting, with hyperparameters $\eta_k=3k^{-1}$ and $D_k=10+10\log(k)$.  
Figure \ref{fig:Reg_Cmp} confirms that the these two versions of regret appear to be nearly indistinguishable. This is due to the geometric ergodicity of $G/G/1$ queue.
\begin{figure}[H]
\centering
\includegraphics[width=0.85\linewidth]{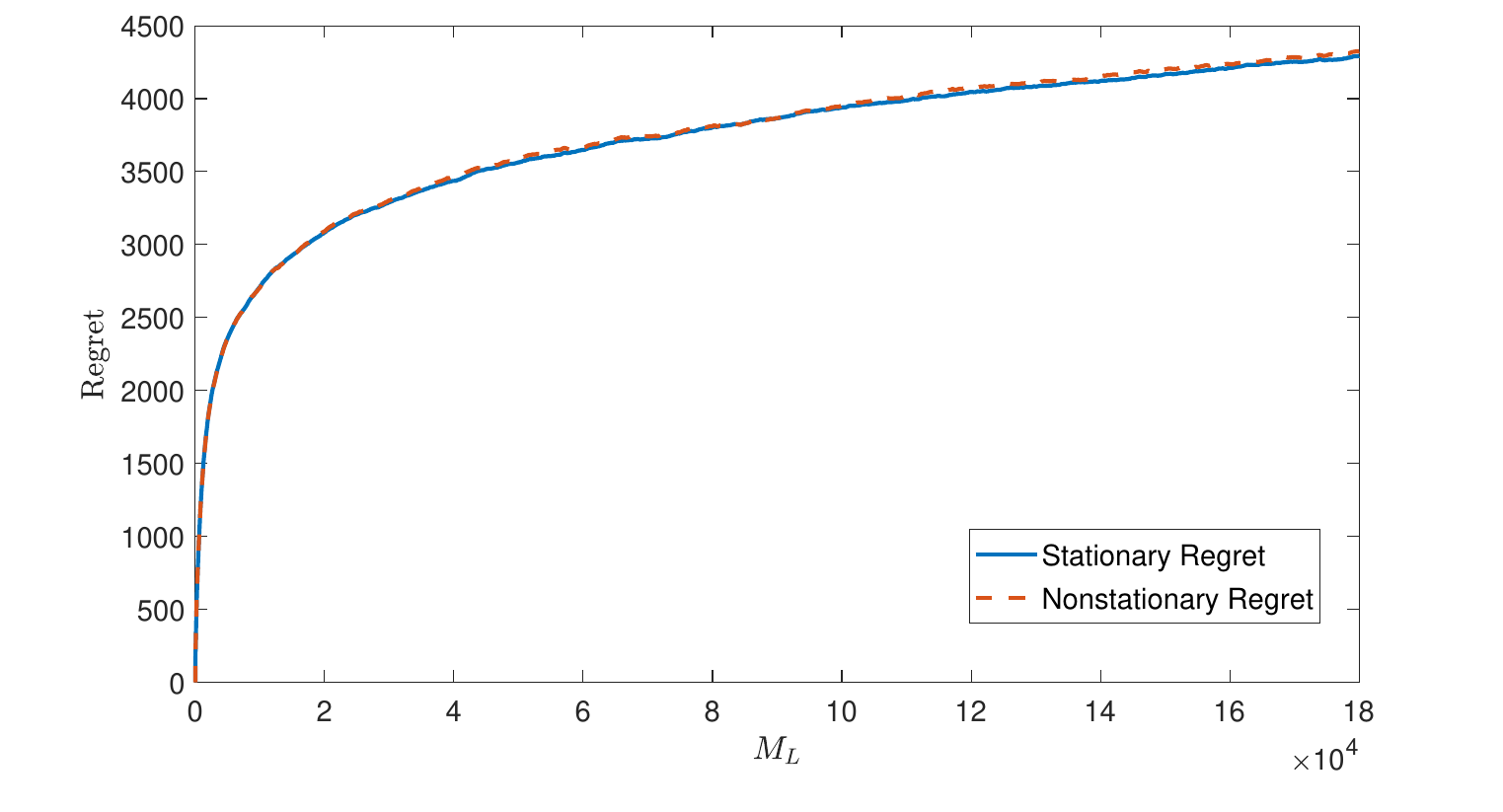}
\vspace{-0.1in}
\caption{Comparing two versions of regret: (i) ``stationary regret" that benchmarks with the steady-state performance under $(\mu^*,p^*)$ (defined in \eqref{regret_def}), and (ii) ``nonstationary regret" that benchmarks with the nonstationary performance under $(\mu^*, p^*)$. Hyperparameters are $\eta_k=3k^{-1}$, $D_k=10+10\log(k)$. Both regret curves are estimated by 1,000 independent runs.}
\label{fig:Reg_Cmp}
\end{figure}
%%%%%%%%%%%%%%%%%%%%%%%%%%%%%%%%%%%%%%%%%%%%%%%%%%%%%%%%%%%%%%%%%%%%%
{
\section{Additional discussion on Assumption \ref{assmpt: convexity}}
\label{appx: demand function}
In this section, we provide some sufficient conditions for strong convexity in the $M/GI/1$ case.  
\begin{lemma}\label{lmm: MG1 convex}
	For $M/GI/1$ queues, if $c(\mu)$ is convex,  $\lambda(\underline{p})/\underline{\mu}<1$, and in addition,
	\begin{equation}
		\frac{\lambda'(p)^2}{2\lambda(p)}<\lambda''(p)<\frac{-2\lambda'(p)}{p},\quad \text{and}\quad \frac{\lambda(\bar{p})}{\bar{\mu}}>1-1/\sqrt{2}\approx0.29,
		\label{eq: MG1 sufficient convex}
	\end{equation}
	then $f(\mu,p)$ is strongly convex in $\mathcal{B}$.
\end{lemma}
\begin{proof}[Proof of Lemma \ref{lmm: MG1 convex}]
	Recall that $f(\mu,p)=-p\lambda(p)+h_0\EE[Q_\infty(\mu,p)]+c(\mu)$, and $(-p\lambda(p))''=-p\lambda''-2\lambda'$.
	Under condition \eqref{eq: MG1 sufficient convex}, we have $-p\lambda''-2\lambda'>0$. Therefore, both $-p\lambda(p)$ and $c(\mu)$ are convex, and it suffices to show that $\EE[Q_\infty(\mu,p)]$ is strongly convex in $\mu$ and $p$. For $M/GI/1$ queues, Pollaczek-Khinchine formula yields that 
	$$q(\mu,p)\equiv\EE[Q_\infty(\mu,p)]=C\frac{\lambda(p)}{\mu-\lambda(p)}+(1-C)\frac{\lambda(p)}{\mu},$$
	with $C\equiv \frac{1+c_s^2}{2}$. 
	For any given pair of $(\mu,p)$, let $H_q$ be the Hessian matrix of $q(\mu,p)$. We next verify that $H_q$ is positively definite. By direct calculation, we have
	\begin{align*}
		\partial_{p}^2 q&=C\frac{\mu}{(\mu-\lambda)^3}\left(2(\lambda')^2+(\mu-\lambda)\lambda''\right)+(1-C)\frac{\lambda''}{\mu},\\
		\partial_{\mu}^2 q&=C\frac{2\lambda}{(\mu-\lambda)^3}+(1-C)\frac{2\lambda}{\mu^3},\quad \partial_{p}\partial_{\mu}q=-C\frac{\lambda+\mu}{(\mu-\lambda)^3}\lambda'-(1-C)\frac{\lambda'}{\mu^2},
	\end{align*}
	with $\lambda',\lambda''$ being the first and second order derivatives of $\lambda(p)$.
	As a result, the determinant of Hessian matrix of $H_q$ is 
	\begin{align*}
		|H_q|=&\frac{C^2}{(\mu-\lambda)^5}\left(2\mu\lambda\lambda''-(\mu-\lambda)(\lambda')^2\right)+\frac{(1-C)^2}{\mu^4}\left(2\lambda\lambda''-(\lambda')^2\right)\\
		&\qquad+\frac{2C(1-C)}{\mu^2(\mu-\lambda)^3}\left((2\mu-\lambda)\lambda\lambda''-(\mu-\lambda)(\lambda')^2\right).
	\end{align*}
	To show that $H_q$ is positively definite, it suffices to show that $\partial_\mu^2q$, $\partial_p^2q $ and $|H_q|$ are all positive. First,
	it is clear that 
	$$\partial_{\mu}^2 q=2\lambda C\left(\frac{1}{(\mu-\lambda)^3}-\frac{1}{\mu^3}\right)+\frac{2\lambda}{\mu^3}>0.$$
	Next, we compute
	\begin{align*}
		\partial_{p}^2 q &=C\frac{\mu}{(\mu-\lambda)^3}\left(2(\lambda')^2+(\mu-\lambda)\lambda''\right)+(1-C)\frac{\lambda''}{\mu}\\
		&=\frac{2C\mu}{(\mu-\lambda)^3}(\lambda')^2+\frac{C\mu}{(\mu-\lambda)^2}\lambda''+(1-C)\frac{\lambda''}{\mu}\\
		&\stackrel{(a)}{>}\frac{C\mu}{(\mu-\lambda)^2}\lambda''+(1-C)\frac{\lambda''}{\mu}\\
		&=C\lambda''\Big(\underbrace{\frac{\mu}{\mu-\lambda}}_{>1}\cdot\frac{1}{\mu-\lambda}-\frac{1}{\mu}\Big)+\frac{\lambda''}{\mu}\stackrel{(b)}{>}C\lambda''\left(\frac{1}{\mu-\lambda}-\frac{1}{\mu}\right)+\frac{\lambda''}{\mu}>0.
	\end{align*}
	Here, inequality (a) follows from that $\frac{2C\mu}{(\mu-\lambda)^3}(\lambda')^2>0$.  Inequality (b) holds  due to the facts that $\frac{\mu}{\mu-\lambda}>1$ and that $\lambda''>\frac{(\lambda')^2}{2\lambda}\geq0$. The last inequality holds because $\frac{1}{\mu-\lambda}>\frac{1}{\mu}$. As a result, we have $\partial^2_{p} q, \partial^2_{\mu} q>0$.
	Next, we verify that $|H_q|>0$. Because $2\lambda\lambda''-(\lambda')^2>0$, we have
	$$2\mu\lambda\lambda''>(\mu-\lambda)(\lambda')^2,\quad \text{and}\quad(2\mu-\lambda)\lambda\lambda''>(\mu-\lambda)(\lambda')^2.$$
	Therefore,
	\begin{align*}
		|H_q|=~&\frac{C^2}{(\mu-\lambda)^5}\left(2\mu\lambda\lambda''-(\mu-\lambda)(\lambda')^2\right)+\frac{(1-C)^2}{\mu^4}\left(2\lambda\lambda''-(\lambda')^2\right)\\
		&\qquad+\frac{2C(1-C)}{\mu^2(\mu-\lambda)^3}\left((2\mu-\lambda)\lambda\lambda''-(\mu-\lambda)(\lambda')^2\right)\\
		\stackrel{(c)}{>}&\frac{C^2}{(\mu-\lambda)^5}\left(2\mu\lambda\lambda''-(\mu-\lambda)(\lambda')^2\right)+\frac{2C(1-C)}{\mu^2(\mu-\lambda)^3}\left((2\mu-\lambda)\lambda\lambda''-(\mu-\lambda)(\lambda')^2\right)\\
		>~&\frac{C^2}{(\mu-\lambda)^5}\Big(\underbrace{2\mu\lambda\lambda''}_{>(2\mu-\lambda)\lambda\lambda''}-(\mu-\lambda)(\lambda')^2\Big)-\frac{2C^2}{\mu^2(\mu-\lambda)^3}\left((2\mu-\lambda)\lambda\lambda''-(\mu-\lambda)(\lambda')^2\right)\\
		>~&\frac{C^2}{(\mu-\lambda)^5}\left((2\mu-\lambda)\lambda\lambda''-(\mu-\lambda)(\lambda')^2\right)-\frac{2C^2}{\mu^2(\mu-\lambda)^3}\left((2\mu-\lambda)\lambda\lambda''-(\mu-\lambda)(\lambda')^2\right)\\
		=~&\frac{C^2\left((2\mu-\lambda)\lambda\lambda''-(\mu-\lambda)(\lambda')^2\right)}{(\mu-\lambda)^3}\left(\frac{1}{(\mu-\lambda)^2}-\frac{2}{\mu^2}\right)>0.
	\end{align*}
	Inequality (c) holds because $\frac{(1-C)^2}{\mu^4}(2\lambda\lambda''-(\lambda')^2)$ is positive by \eqref{eq: MG1 sufficient convex}. The last inequality follows from $\frac{\lambda(p)}{\mu}>1-1/\sqrt{2}$. This closes our proof.
\end{proof}
}

\newpage

\begin{table}
\centering
\vspace{-0.2 in}
\tiny{
\begin{tabular}{cll}
	%\caption{Glossary of notation.\label{tab: notation table}}\\
	
	\hline 
	& Notation                                                              & Description                                                                    \\
	\hline
	\multicolumn{1}{c}{\multirow[t]{17}{*}{\begin{tabular}[c]{@{}c@{}}\\ \\ \\ \\Model\\ parameters \\ and \\functions\end{tabular}}} & $\mathcal{B}=[\underline{p},\bar{p}]\times[\underline{\mu},\bar{\mu}]$ & Feasible action space                                               \\
	\multicolumn{1}{c}{}                                                                                            & $c(\mu)$                                                               & Staffing cost function                                                                  \\
	\multicolumn{1}{c}{}                                                                                            & $f(\mu,p)$                                                             & Objective (loss) function                                                      \\
	\multicolumn{1}{c}{}                                                                                            & $h_0$                                                                  & Customer holding cost                                                        \\
	\multicolumn{1}{c}{}                                                                                            & $\lambda(p)$                                                           & Demand function                                                                \\
	\multicolumn{1}{c}{}                                                                                            & $\mu$                                                                  & Service rate/capacity                                                                   \\
	\multicolumn{1}{c}{}                                                                                            & $n$                                                                  & Market size/ System scale in Section \ref{sec: num}                                                                 \\
	\multicolumn{1}{c}{}                                                                                            & $p$                                                                    & Service fee                                                                          \\
	\multicolumn{1}{c}{}                                                                                            & $Q_\infty(\mu,p)$                                                      & Stationary queue length under $(p,\mu)$                                        \\
	\multicolumn{1}{c}{}                                                                                            & $S_n^k$                                                                  & Service time of the $(n-1)^{\rm th}$ customer in cycle $k$                                           \\
	\multicolumn{1}{c}{}                                                                                            & $\tau_n$                                                               & Interarrival time between $(n-1)^{\rm th}$ and $n^{\rm th}$ customers                        \\
	\multicolumn{1}{c}{}                                                                                            & $\theta,\gamma,\eta$                                                   & Parameters of light-tail assumptions (Assumption 2)                                        \\
	\multicolumn{1}{c}{}                                                                                            & $U_n,V_n$                                                              & Unscaled random ``seeds" of interarrival and service times                    \\
	%\multicolumn{1}{c}{}                                                                                            & $x=(p,\mu)$                                                            & Decision variables                                                             \\
	\multicolumn{1}{c}{}                                                                                            & $x^*=(p^*,\mu^*)$                                                      & Optimal decision fee and service rate                                                     \\
	\hline
	\multirow[t]{7}{*}{\begin{tabular}[c]{@{}l@{}}\\ \\ \\Algorithmic\\ hyperparameters \end{tabular}}
	& $D_k$                                                                  & Sample size (number of customers served) in cycle $k$         \\
	& $\eta_k$                                                               & Step size or learning rate in cycle $k$                                         \\
	& $H_k$                                                                  & Gradient estimator in cycle $k$                                                \\
	& $M_L$                                                                  & Cumulative number of customers served by cycle $L$                             \\
	& $Q_k$                                                                  & Queue content leftover from cycle $k-1$                   \\
	%& $\Pi_\mathcal{B}(\cdot)$                                               & Projection operator based on stable region $\mathcal{B}$                       \\
	& $W_n^k$                                                                & Delay of the $n^{\rm th}$ customer in cycle $k$                                          \\
	& $\xi$                                                                  & Warm up rate                                                                   \\
	& $X_n^k$ ($X_n$)                                                        & Server's busy time observed by customer $n$ in cycle $k$                    \\
	%& $Z$																	& Bernoulli random variable for direction to update\\
	\hline
	\multirow[t]{39}{*}{\begin{tabular}[c]{@{}l@{}}\\ \\ \\Constants and \\ bounds in \\regret analysis\end{tabular}}          
	& $a_D,b_D$                                                              & Constants for $D_k$ in equation \eqref{par:D}                                               \\
	& $A=4\sqrt{M}+4M$                                                       & Constant in Corollary 1                                                        \\
	%& $b_k$                                                                  & $\EE[\|x^k-x^*\|^2]$ (proof of Theorem 2)                                        \\
	& $B$                                                                    & Constant of stationary waiting times in Lemma 4                     \\
	& $B_k,~\mathcal{V}_k$                                                             & Upper bounds for bias and Variance of $H_k$                                         \\
	& $c_\eta$                                                               & Constant for $\eta_k$ in equation \eqref{par:eta}                                                    \\
	& $c_\mu,c_\lambda$												        &Constants in Lemma 3\\
	& $C=\max\{\|x_0-x^*\|^2,8K_3/K_0\}$                                   & Constant in Theorem 2 \\
	& $C_0=\max _{x\in\mathcal{B}}\{h_0\lambda'(p),h_0\lambda(p)/\mu\}$      & Constant in the proof of Theorem 3 \\
	& $C_1=\max_{x\in\mathcal{B}}\{|\lambda(p)+p\lambda'(p)|,|c'(\mu)|\}$    & Constant in the proof of Theorem 3 \\
	& $C_D$																						&Constant for the selection of $D_k$ (Theorem 3)\\
	& $d_k=\lceil 4\log(k)/\min(\theta,\gamma)\rceil$                        & Constant of warm-up time (Theorem 1)                                                  \\
	& $\tilde{d}_k=\lceil 5\log(k)/\min(\theta,\gamma)\rceil$                & Constant of warm-up time (Theorem 1)                                                  \\
	%& $\Delta_k$                                                             & $k^{-\beta}$ (Short-hand in proof of Theorem 2)                             \\
	%& $F_n^k$                                                                & Inter-service time (proof of Corollary 3)       \textcolor{red}{WHAT IS THIS?}                                \\
	& $\Gamma_i,~ i=1,2$                                                     & Stopping time of random walks (proof of Lemma 2)                       \\
	& $I_1,I_2,I_3$                                                          & Three terms of the regret of nonstationary                           \\
	& $K_{alg}$                                                              & Bound of the cumulative regret (Theorem 3)                                          \\
	& $K'$																		& Constant for regret of nonstationary (proof of Theorem 1)\\
	& $K=K'+2M_0/\log(2)$                                                           & Constant in the proof of Theorem 1\\
	& $K_0,K_1$                                                              & Constants for convexity and smoothness (Assumption 1)                           \\
	& $K_2=2K_3/K_0$                                                                  & Constant for $D_k$ (Theorem 1)                                                 \\
	& $K_3$                                                              & Constants for variance in Theorem 3 \eqref{eq: K_3}                             \\
	& $K_4$                                                              & Constants for convergence rate of busy time in Lemma EC.1                                \\
	& $K_5=32e^4/(\min(\theta,\gamma))$                                      & Bound in the proof of Theorem 3 \eqref{eq: K5}                         \\
	& $K_6=K_2\max_p\lambda'(p)$                                             & Bound in the proof of Corollary 2                                           \\
	& $\bar{\lambda},\underline{\lambda}$					& Upper and lower bounds for $\lambda(p)$\\
	%& $\lambda_n^k$														& Arrival rate of customer $n$ in cycle $k$					\\
	& $M$                                                                    & Uniform bound for queueing functions in Lemma 1                                \\
	& $M_0$                                                                 & Upper bound of the regret in the first cycle\\
	& $R_k$                                                                  & Total regret during cycle $k$                                        \\
	& $R_{1,k},R_{2,k}$                                                             & Regret of nonstationary/suboptimality in cycle $k$                             \\
	%& $R_{2,k}$                                                              & Regret of suboptimality generated during cycle $k$                             \\
	& $R(L),R_1(L),R_2(L)$                                                   & Cumulative/nonstationary/suboptimal regret by cycle $L$ \\
	%& $\rho_k$                                                               & Expected cost generated during cycle $k$                                       \\
	& $T_k$                                                                  & Length of cycle $k$                                              \\
	%		& $\bar{W}_n^k$                                                          & Waiting time of coupled queue starting from cycle $k$      \\
	%		&$\tilde{W}_n,\tilde{X}_n$										& Queueing functions defined in Lemma 3 and proof of Corollary 2 \\
	%		& $\hat{X}_n^k,\hat{W}_n^k,\hat{Q}_n^k$                                  & Dominate queueing functions (see proof of Lemma 1)                             \\
	%		&$Y_n,\tilde{Y}_n,Y_n^{\mu_i},Y_n^{p_i}$ & Auxiliary random walks in Lemma 3 and Lemma 4\\
	%		&$Z_n,\tilde{Z}_n,\hat{Z}_n$						& Auxiliary random variables in Lemma 5 (Gradient estimator)\\
	\hline
\end{tabular}
}
\normalfont\caption{Glossary of notation.\label{tab: notation table}}
\end{table}

\end{document}